\documentclass[12pt]{article}
\usepackage{amsmath,amsthm, amssymb, amsfonts,accents,nccmath}
\usepackage{enumitem}
\usepackage{array}
\usepackage{lipsum}
\usepackage{mathtools}
\usepackage[toc,page]{appendix}
\usepackage[english]{babel}
\usepackage{dsfont}
\usepackage{booktabs}
\usepackage[linesnumbered,commentsnumbered,ruled,vlined]{algorithm2e}
\usepackage{tikz}
\usetikzlibrary{decorations.pathreplacing,calligraphy}
\usepackage{tikz-network}
\usetikzlibrary{hobby}
\usepackage[utf8]{inputenc}
\usepackage[english]{babel}
\usepackage{float}

\usepackage{caption}
\usepackage{subcaption}
\usetikzlibrary{arrows}

\usepackage{cite}

\newcommand\restr[2]{{
  \left.\kern-\nulldelimiterspace 
  #1 
  \vphantom{\big|} 
  \right|_{#2} 
  }}

\usepackage{hyperref}
\hypersetup{
    colorlinks=true,
    linkcolor=blue,
    filecolor=darkmagenta,      
    urlcolor=cyan,
    citecolor=red
} 
\makeatletter
\tikzset{
	dot diameter/.store in=\dot@diameter,
	dot diameter=2pt,
	dot spacing/.store in=\dot@spacing,
	dot spacing=9pt,
	dots/.style={
		line width=\dot@diameter,
		line cap=round,
		dash pattern=on 0pt off \dot@spacing
	}
}
\makeatother

\makeatletter
\def\BState{\State\hskip-\ALG@thistlm}
\makeatother
\numberwithin{equation}{section}

\newtheorem{theorem}{Theorem}[section]
\newtheorem{cor}[theorem]{Corollary}
\newtheorem{lem}[theorem]{Lemma}
\newtheorem{prop}[theorem]{Proposition}

\newtheorem{defn}[theorem]{Definition}
\newtheorem{rem}[theorem]{Remark}
\newtheorem{ex}[theorem]{Example}

\def \bd{\begin{defn}}
	\def \ed{\end{defn}}
\def \bt{\begin{theorem}}
	\def \et{\end{theorem}}
\def \bl{\begin{lem}}
	\def \el{\end{lem}}
\def \bc{\begin{cor}}
	\def \ec{\end{cor}}
\def \be{\begin{equation}}
	\def \ee{\end{equation}}
\def \ba{\begin{array}}
	\def \ea{\end{array}}
\def\bp{\begin{prop}}
	\def\ep{\end{prop}}
\def \bx{\begin{ex}}
	\def \ex{\end{ex}}
\def\bxa{\begin{ex}\rm}
	\def\exa{\end{ex}}
\def \br{\begin{rem}}
	\def \er{\end{rem}}
\def \bdsc{\begin{description}}
	\def \edsc{\end{description}}
\def \bn{\begin{case}}
	\def \en{\end{case}}
\def \bfig{\begin{figure}}
	\def \efig{\end{figure}}
\def \bpic{\begin{picture}}
	\def \epic{\end{picture}}
\def \bnn{\begin{note}}
	\def \enn{\end{note}}\def
\bpr{\begin{problem}}
	\def\epr{\end{problem}}
\def \pf{\begin{proof}}
	\def \qe{\end{proof}}

\newcommand{\lbd}{\lambda}

\title{On the minimum spectral radius of unicyclic graphs with a given matching number}
\author{Joyentanuj Das\footnote{Department of Mathematics, College of Engineering and Technology, SRM Institute of Science and Technology, Kattankulathur, Chennai, 603203, India. \indent  Email: joyentanuj@gmail.com,  joyentad@srmist.edu.in} \quad and \quad Debabrota Mondal\footnote{Department of Mathematics, Indian Institute of Technology Guwahati, Guwahati, 781039, India. \indent  Email: debabrota@iitg.ac.in}}

\date{}

\begin{document}

\maketitle

\begin{abstract}
A matching $M$ in a graph $G = (V, E)$ is a set of edges such that no two edges in $M$ share a common vertex. A matching with maximum cardinality is called a maximum matching and its cardinality is the matching number $\gamma(G)$. The spectral radius of $G$ is the maximum absolute eigenvalue of its adjacency matrix. This article addresses the Brualdi-Solheid problem--the determination of extremal spectral radii within specific graph classes--for the class $\mathcal{U}_{n,\gamma}$ of simple connected unicyclic graphs on $n$ vertices with matching number $\gamma$. We specifically characterize all graphs that achieve the minimum spectral radius in $\mathcal{U}_{n,\gamma}$ for matching numbers $\gamma \in \left\{ 1, 2, 3, \lfloor \frac{n}{2} \rfloor \right\}$.
\end{abstract}

\noindent {\sc\textsl{Keywords}:}  Adjacency matrix, Matching number, Spectral radius, Unicyclic graphs.

\noindent {\textbf{MSC}:}   05C50, 05C15, 15A18

\section{Introduction and Preliminaries}\label{sec:intro}
Let $G=(V,E)$  be a finite, simple, connected graph with $V$ as the set of vertices and $E$ as the set of edges in $G$. We write $u\sim v$ to indicate that the vertices $u$ and $v$ are adjacent in $G$. The degree of the vertex $v$, denoted by $d_G(v)$, equals the number of vertices in $V$ that are adjacent to $v$. The neighborhood of a vertex $v \in V$ is the set $N_G(v) = \{u \in V(G)\colon u \sim v \}$. Thus, $d_G(v) = |N_G(v)|$. A pendant vertex is a vertex of degree one and a quasi-pendant vertex is a vertex with degree at least $2$ that is adjacent to a pendant vertex. For $u,v \in V$, the distance between $u$ and $v$ is defined to be the length of the shortest path between $u$ and $v$ and it is denoted by $d(u,v)$.

A graph $H=(V_1,E_1)$ is said to be a subgraph of $G=(V_2,E_2)$ if $V_1 \subset V_2$ and $E_1 \subset E_2$. For any subset $S \subset V_2$, a subgraph $H$ of $G$ is said to be an induced subgraph with vertex set $S$, if $H$ is a maximal subgraph of $G$ with vertex set $V_1=S$. We write $|S|$ to denote the cardinality of the set $S$. For a vertex $v \in V(G)$, we use $G-v$ to denote the subgraph of $G$ obtained by deleting the vertex $v$ along with all the edges that have $v$ as one of its endpoints. We will use the notation $G \cong H$ to denote that two graphs $G$ and $H$ are isomorphic.

Let $G=(V,E)$ be a graph. For $u,v \in V$, the adjacency matrix of the graph  $G$ is,    ${A}(G) = [a_{uv}]$, where $a_{uv}= 1 $  if $u\sim v$ and $0$ otherwise. By eigenvalues, eigenvectors, and characteristic polynomial of $G$, we mean those of $A(G)$. We use the notation $\phi_\lbd(G)$ to denote the characteristic polynomial of $G$.  For any column vector $\mathbf{x}$, if $x_u$ represents the entry of $\mathbf{x}$ corresponding to the vertex $u\in V$, then 
$$\mathbf{x}^{\top} {A}(G)\mathbf{x}=  2\sum_{u\sim w}x_{u}x_{w}, $$
where $\mathbf{x}^{\top}$ represent  the transpose of  $\mathbf{x}$. 

For a connected graph $G$ on $n \geq 2$ vertices, by the Perron--Frobenius theorem, the spectral radius  $\rho(G)$ of ${A}(G)$ is a simple positive eigenvalue and the associated eigenvector is entry-wise positive (for details see~\cite{Bapat}). We will refer to such an eigenvector as a Perron vector of $G$.  Now we state a few known results on spectral radius useful in our subsequent calculations. By the Min-max theorem, we have
\begin{equation}\label{eqn:eq_sp_rd}
\rho(G) = \max_{\mathbf{x}\neq \mathbf{0}}\dfrac{\mathbf{x}^{\top} {A}(G)\mathbf{x}}{\mathbf{x}^{\top} \mathbf{x}}= \max_{\mathbf{x}\neq \mathbf{0}}\dfrac{2\sum_{u\sim w}x_{u}x_{w}}{\sum_{u\in V}x_{u}^2}.
\end{equation}
Furthermore, in Eqn.~\eqref{eqn:eq_sp_rd}, the maximum is attained if and only if $\mathbf{x}$ is a Perron vector of $\rho(G).$

The following result compares the spectral radius of a graph and its proper subgraph. The proof can be found in \rm\cite{Li}.
\bl\label{lem:subgraph}{\rm\cite{Li}} Let $G$ be a graph and $H$ be a proper subgraph of $G$. Then $\rho(H) < \rho(G)$. \el


Since the characteristic polynomials of graphs are real-rooted monic polynomials with integer coefficients, the following result is important to compare the spectral radius of two graphs.
\bl\label{poly} Let $f(\lbd)$ and $g(\lbd)$ be two real-rooted monic polynomials, and $\mu_1(f)$ and $\mu_1(g)$ be their largest roots, respectively. Then the following assertions hold:
\bdsc
\item{\rm(i)} If $f(\lbd)> g(\lbd)$ for $\lbd\geq\mu_1(g)$, then $\mu_1(f)<\mu_1(g)$.
\item{\rm(ii)}  Suppose that $f(\lbd)-g(\lbd)=c(\lbd-\mu)$ such that $c>0$. If $\mu>\mu_1(g)$, then $\mu_1(f)>\mu_1(g)$.
\edsc
\el
\pf Proof of (i): On the contrary, assume that $\mu_1(f)\geq\mu_1(g)$. Since $g(\lbd)$ is a monic polynomial and $\mu_1(g)$ is its largest root, we have $g(\mu_1(f))\geq 0$. On the other hand, by assumption, $f(\lbd)>g(\lbd)$ for all $\lbd\geq \mu_1(g)$, and hence, $g(\mu_1(f))<f(\mu_1(f))=0$. This is clearly a contradiction. Therefore, $\mu_1(f)<\mu_1(g)$.

Proof of (ii): On the contrary, assume that $\mu_1(f)\leq\mu_1(g)$. Then $f(\mu_1(g))\geq0$. Again, we have given that $\mu_1(g)<\mu$. Note that $f(\lbd)< g(\lbd)$ for $\lbd<\mu$. Thus, $f(\mu_1(g))<0$, a contradiction.
\qe


Let $G = (V, E)$ be a graph. We say two edges are independent if they do not share a common vertex. A {\em matching} $M$ in  $G$ is a collection of independent edges. A matching $M$ is said to be a {\em maximum matching} if it possesses the largest cardinality among all matchings in $G$; specifically, $M$ is maximum if $|M| \ge |M'|$ for every matching $M' \subseteq E$. This maximum cardinality is termed the matching number of $G$ and is denoted by $\gamma(G)$. It is a fundamental property that $\gamma(G) \le \frac{n}{2}$, where $n = |V|$, with the upper bound achieved if and only if $G$ contains a perfect matching. The {\em girth} of a graph $G$ is the length of the minimum cycle. If $G$ is a unicyclic graph, then the girth of $G$ is the length of its unique cycle.

A walk $[v_1, v_2, \dots, v_k] (k \geq 2)$ of a graph $G$ with length $k-1$ is called an {\em internal path}, if these $k$ vertices are distinct (except possibly $v_1 = v_k$), $d(v_1) \geq 3, d(v_k) \geq 3$ and $d(v_2) = \dots = d(v_{k-1}) = 2$ (unless $k = 2$). An edge that belongs to some internal path of $G$ is called an {\em internal edge} of $G$. By $P_n$, $C_n$, and $S_n$, we denote the path, cycle and star graph on $n$ vertices, respectively. Let $W_n$ be the tree obtained from $P_{n-2}$ $(n \geq 3)$ by adding two pendant vertices to both of its quasi-pendant vertices. Let $G$ be a graph and $G_{uv}$ be the graph obtained from $G$ by the subdivision of the edge $u v$ (that is, by deleting the edge $uv$, adding a new vertex $w$ and two new edges $uw$ and $wv$). The following result compares the spectral radii of $G$ and $G_{uv}$.

\begin{lem}\label{lem-sub}{\rm\cite{Hoffman}}
Assume that $G \not\cong W_n$ and $uv$ is an edge on an internal path of $G$. Let $G_{uv}$ be the graph obtained from $G$ by the subdivision of the edge $uv$. Then $\rho(G_{uv}) < \rho(G)$.
\end{lem}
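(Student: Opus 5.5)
The strategy is the classical Hoffman--Smith argument: compare Perron vectors of $G$ and $G_{uv}$ through the quotient \eqref{eqn:eq_sp_rd}, and exploit the fact that a long internal path carries a nearly geometric eigenvector. Let $[v_1,\dots,v_k]$ be the internal path containing $uv$, with $d(v_1),d(v_k)\ge 3$. First dispose of a degenerate situation. If $G$ is a cycle, or more generally if $\rho(G)\le 2$, the graph is a Smith graph. The graphs with an internal path and $\rho\le 2$ are exactly the $\tilde D_n$ graphs, i.e.\ $W_n$, which is excluded. So we may assume $\rho:=\rho(G)>2$. Write $\rho=t+t^{-1}$ with $t>1$.

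The main step is to show that $\rho(G_{uv})<\rho$ by exhibiting a nonzero vector $\mathbf y$ on $G_{uv}$ with $A(G_{uv})\mathbf y\le \rho\,\mathbf y$ entrywise and strict inequality somewhere. Then, since $G_{uv}$ is connected, the Perron vector $\mathbf z$ of $G_{uv}$ satisfies
\[\rho(G_{uv})\,\mathbf z^\top\mathbf y=\mathbf z^\top A(G_{uv})\mathbf y<\rho\,\mathbf z^\top\mathbf y,\]
which gives the claim.

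To build $\mathbf y$, take the Perron vector $\mathbf x$ of $G$. Along the internal path the eigen-equation $\rho x_{v_i}=x_{v_{i-1}}+x_{v_{i+1}}$ forces
\[x_{v_i}=\alpha t^{i}+\beta t^{-i},\qquad 1\le i\le k.\]
We insert the new vertex $w$ and shift the path by half a step: on the subdivided path $[v_1,\dots,v_j,w,v_{j+1},\dots,v_k]$ (relabelled as $k+1$ vertices), we define $\mathbf y$ by keeping $\mathbf x$ off the path and choosing path values of the form $\alpha' t^{i}+\beta' t^{-i}$. The coefficients $\alpha',\beta'$ are chosen to match $x_{v_1}$ at one end and $x_{v_k}$ at the other. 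Interior path vertices then satisfy the eigen-equation with equality, since every such sequence solves the recurrence. Vertices off the path are unchanged except through their neighbours $v_1$ or $v_k$, whose values are preserved, so they also satisfy equality.

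The only places to check are the endpoints $v_1$ and $v_k$, where the neighbour on the path has changed value. We need that the new value at the neighbour of $v_1$, namely $y_{v_2}$, is at most $x_{v_2}$, and similarly at $v_k$, with at least one inequality strict. This comparison is the main obstacle: it amounts to showing that interpolating the same endpoint values over one extra step lowers the values next to the ends. I would verify it directly from the explicit formula. Given positive endpoint values $a,b$ and length $m$, the adjacent value is
\[\frac{a\sinh((m-1)s)+b\sinh s}{\sinh(ms)},\qquad t=e^s.\]
We must show this decreases in $m$. A clean way is to check that the difference between length $m$ and length $m+1$ factors as a positive quantity times
\[a\sinh s-b\sinh(\,\cdot\,)\text{-type terms},\]
and then to use the endpoint eigen-equations with $d(v_1),d(v_k)\ge3$. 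These give $\rho x_{v_1}\ge x_{v_2}+2\min x$, which bounds the ratio $x_{v_2}/x_{v_1}$ and keeps the values from increasing.

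If this ratio bookkeeping becomes unwieldy, the fallback is Hoffman--Smith's original route. That route compares characteristic polynomials via the path recurrence $\phi(G_{uv})=\lambda\phi(G)-\phi(\cdots)$ and applies Lemma~\ref{poly}(i) on $[\rho(G_{uv}),\infty)$.
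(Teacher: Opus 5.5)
The paper gives no proof of this lemma; it is quoted from Hoffman and Smith. Your reduction to $\rho(G)>2$ via Smith's classification is sound, and so is the supersolution principle. The gap is the particular $\mathbf y$: the step you flag as the main obstacle is not just unwieldy, it is false.

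Write $\rho=2\cosh s$, let the internal path of $G$ have $m=k-1$ edges, and put $a=x_{v_1}$, $b=x_{v_k}$. Let $F_{a,b}(m)=\frac{a\sinh((m-1)s)+b\sinh s}{\sinh(ms)}$ be your adjacent value. Since $\mathbf x$ is itself the $\rho$-harmonic interpolation of $a,b$ along the path, $x_{v_2}=F_{a,b}(m)$, and the new neighbour of $v_1$ gets $F_{a,b}(m+1)$. A direct computation gives
\[
F_{a,b}(m)-F_{a,b}(m+1)=\frac{2\sinh s\,\sinh(s/2)}{\sinh(ms)\,\sinh((m+1)s)}\Bigl(b\cosh\bigl((m+\tfrac12)s\bigr)-a\cosh\bigl(\tfrac s2\bigr)\Bigr).
\]
So your two endpoint inequalities hold if and only if $\cosh(\frac s2)/\cosh((m+\frac12)s)\le b/a\le\cosh((m+\frac12)s)/\cosh(\frac s2)$. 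For $m=1$ this reads $1/(\rho-1)\le x_{v_2}/x_{v_1}\le\rho-1$.

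The degree condition does not give this. Each off-path neighbour $z$ of $v_1$ has $x_z\ge x_{v_1}/\rho$, so the most the eigen-equation at $v_1$ yields is $x_{v_2}\le(\rho-2/\rho)x_{v_1}$. But $\rho-2/\rho>\rho-1$ when $\rho>2$, and this bound is attained. Let $G$ be the double star with $v_1\sim v_2$, where $v_1$ has two pendant neighbours and $v_2$ has $p\ge 3$. Then $\rho>2$, $G\not\cong W_n$, and $x_{v_2}=(\rho-2/\rho)x_{v_1}$. Your vector has $y_w=(x_{v_1}+x_{v_2})/\rho>x_{v_1}$, hence $(A(G_{uv})\mathbf y)_{v_2}=px_{v_2}/\rho+y_w>\rho x_{v_2}=\rho y_{v_2}$. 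For $p=3$ and $x_{v_1}=1$ this is $2.769$ against $1+\sqrt3\approx 2.732$. Yet the lemma holds there: $\rho(G)^2=3+\sqrt3$ and $\rho(G_{uv})^2=(7+\sqrt5)/2$.

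Fixing both end values forces the whole path profile, so no choice of insertion point rescues the construction. It fails whenever the off-path neighbours of an end vertex carry total Perron weight below that vertex's own entry. This also happens for internal paths on the cycle of a unicyclic graph, e.g.\ the edge $v_1v_2$ of a $4$-cycle with one pendant vertex at $v_1$ and many at $v_2$, which is exactly how the paper uses the lemma.

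A correct proof needs non-local information. Let $G_m$ denote the graph whose internal path has $m$ edges. For a bridge path with sides $H_a\ni a$, $H_b\ni b$ and $\lambda=t+t^{-1}$, one has $(t-t^{-1})t^m\phi(G_m)=t^{2m}\alpha_+\beta_+-\alpha_-\beta_-$. Here $\alpha_\pm=\phi(H_a)-t^{\mp1}\phi(H_a-a)$ and $\beta_\pm$ is defined likewise. Consecutive expressions therefore differ by $t^{2m}(t^2-1)\alpha_+\beta_+$. The degree condition is what gives $\alpha_+\beta_+>0$ on $[\rho(G_m),\infty)$, and this yields $\rho(G_{m+1})<\rho(G_m)$.

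When the path lies on a cycle, $\phi(G_m)$ acquires a term independent of $m$ (compare $\phi(C_n)=t^n+t^{-n}-2$). So the three-term recurrence behind your fallback is false there, and a separate argument is needed. As written, the fallback is only a pointer. Note also that Lemma~\ref{poly}(i) would have to be applied on $[\rho(G),\infty)$, not on $[\rho(G_{uv}),\infty)$.
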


\begin{lem}\label{lem:rho}{\rm\cite{Wu}}
	Let $u$ and $v$ be two distinct vertices in a connected graph $G$ and $\{v_i \mid i = 1, 2, \dots, s\} \subseteq N_G(v) \setminus N_G(u)$. Let $\mathbf{x} = (x_{v_1}, x_{v_2}, \dots, x_{v_n})^{\top}$ be the Perron vector of $G$, where $x_{v_i}$ corresponds to $v_i$ {\rm(}$1 \leq i \leq n${\rm )}. Let $G^*$ be the graph obtained from $G$ by deleting the edges $vv_i$ for $1 \le i \le s$ and adding the edges $uv_i$ for $1 \le i \le s$. If $x_u \geq x_v$, then $\rho(G) < \rho(G^*)$.
\end{lem}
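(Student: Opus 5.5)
The plan is to compare Rayleigh quotients using the Perron vector of $G$ as a test vector for $A(G^*)$, and then to rule out equality with the eigenvalue equation at $v$. Throughout, I take $\mathbf{x}$ to be the unit Perron vector of $G$. This is entrywise positive by Perron--Frobenius, since $G$ is connected. I also use that $v_i\neq u$, which is implicit since otherwise $uv_i$ would be a loop.

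First, $G^*$ differs from $G$ only in that each edge $vv_i$ is replaced by $uv_i$. Because $v_i\notin N_G(u)$, no multiple edges arise, so $G^*$ is a simple graph. Using $\mathbf{x}^{\top}A\mathbf{x}=2\sum_{a\sim b}x_ax_b$, we get
\begin{equation*}
\mathbf{x}^{\top}A(G^*)\mathbf{x}-\mathbf{x}^{\top}A(G)\mathbf{x}=2\sum_{i=1}^{s}x_{v_i}\,(x_u-x_v)\ \ge 0,
\end{equation*}
because $x_u\ge x_v$ and every $x_{v_i}>0$. The variational characterization \eqref{eqn:eq_sp_rd}, applied to the symmetric matrix $A(G^*)$, then gives
\begin{equation*}
\rho(G^*)\ \ge\ \mathbf{x}^{\top}A(G^*)\mathbf{x}\ \ge\ \mathbf{x}^{\top}A(G)\mathbf{x}=\rho(G).
\end{equation*}
This step does not need $G^*$ to be connected: for any real symmetric matrix the largest eigenvalue is the maximum of the Rayleigh quotient.

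The main point is strictness. Suppose $\rho(G^*)=\rho(G)=:\rho$. Then both inequalities above are equalities, so $\mathbf{x}$ attains the maximum of the Rayleigh quotient of $A(G^*)$. For a symmetric matrix, a maximizer of the Rayleigh quotient is an eigenvector for the largest eigenvalue, so $A(G^*)\mathbf{x}=\rho\mathbf{x}$. We also have $A(G)\mathbf{x}=\rho\mathbf{x}$.

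Now read off the $v$-th coordinate of both equations. Since $N_{G^*}(v)=N_G(v)\setminus\{v_1,\dots,v_s\}$, this gives
\begin{equation*}
\rho x_v=\sum_{w\in N_G(v)}x_w \quad\text{and}\quad \rho x_v=\sum_{w\in N_G(v)}x_w-\sum_{i=1}^{s}x_{v_i}.
\end{equation*}
Subtracting yields $\sum_{i=1}^s x_{v_i}=0$, which contradicts the positivity of the Perron vector (here $s\ge 1$). Hence $\rho(G)<\rho(G^*)$.

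The only delicate step is this equality case. It relies on the general fact that a Rayleigh-quotient maximizer is an eigenvector, precisely so that connectivity of $G^*$ is never needed. Everything else is the routine calculation above.
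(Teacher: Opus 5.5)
Your proof is correct, and it is the standard argument. The paper does not prove this lemma; it quotes it from Wu et al., where it is proved by exactly this Rayleigh-quotient comparison, with equality excluded by reading off one coordinate of the eigen-equation for $A(G^*)$ (you use $v$; the coordinate $u$ works just as well). Your side remarks are also apt: $s\ge 1$ and $v_i\neq u$ are genuinely needed, and not relying on connectivity of $G^*$ is the right call, since $G^*$ can be disconnected (e.g.\ $v$ becomes isolated when $G$ is the path $u\,a\,b\,v$ and $v_1=b$).
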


\begin{lem}{\rm \cite{Lin}}\label{lem-pendant}
	Let $G$ be a connected graph and $u$ and $v$ be two vertices of $G$ such that $G - u \cong G - v$. Define $G_{s,t}$ {\rm (}$s \geq t \geq 1${\rm )} to be the graph obtained from $G$ by attaching $s$ pendent vertices at $u$ and $t$ pendant vertices at $v$. Then $\rho(G_{s+1,t-1}) > \rho(G_{s,t})$. In particular, $\rho(G_{s+t,0}) > \rho(G_{s+t-1,1}) > \dots > \rho \left( G_{\left\lfloor \frac{s+t}{2} \right\rfloor,\left\lceil \frac{s+t}{2} \right\rceil}\right)$.
\end{lem}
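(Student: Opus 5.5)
The plan is to use the eigenvector equations of the Perron vector together with Lemma~\ref{lem:rho}, arguing by contradiction in the balanced case and by symmetry for the comparison. Write $\rho=\rho(G_{s,t})$ and let $\mathbf{x}$ be its Perron vector. Let $\sigma$ be an isomorphism $G-u\to G-v$. I will treat the natural case where $\sigma$ extends to an automorphism of $G$ exchanging $u$ and $v$, which covers the applications (two cycle vertices, two symmetric path vertices). The general case can be reduced to this by a characteristic polynomial computation, described in the last paragraph.

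First I show $x_u\ge x_v$ when $s\ge t$. Let $a$ and $b$ denote the common values of $\mathbf{x}$ on the pendant vertices at $u$ and at $v$. Then $\rho a=x_u$ and $\rho b=x_v$. Suppose instead that $x_v>x_u$. Move the $s-t$ surplus pendant vertices from $u$ to $v$; this is allowed by Lemma~\ref{lem:rho}, since these pendant vertices are neighbors of $u$ and not of $v$. The resulting graph is $G_{t,s}\cong G_{s,t}$ (apply the automorphism), yet the lemma gives strictly larger spectral radius. This is a contradiction, so $x_u\ge x_v$.

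Now take one pendant vertex $w$ at $v$ (possible since $t\ge1$) and move it to $u$. Since $w\in N(v)\setminus N(u)$ and $x_u\ge x_v$, Lemma~\ref{lem:rho} gives $\rho(G_{s+1,t-1})>\rho(G_{s,t})$. The chain in the ``in particular'' statement follows by iterating from the balanced pair $(\lfloor\frac{s+t}{2}\rfloor,\lceil\frac{s+t}{2}\rceil)$, which is isomorphic to $(\lceil\cdot\rceil,\lfloor\cdot\rfloor)$, until one side is empty.

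The main obstacle is a general $G$ where $u,v$ are not similar. Here I would use the standard pendant reduction. Attaching $s$ pendant vertices at $u$ gives
\[\phi_\lbd(G_{s,t})=\lbd^{s+t}\Bigl(\phi_\lbd(G)-\tfrac{s}{\lbd}\phi_\lbd(G-u)-\tfrac{t}{\lbd}\phi_\lbd(G-v)+\tfrac{st}{\lbd^2}\phi_\lbd(G-u-v)\Bigr).\]
Because $\phi_\lbd(G-u)=\phi_\lbd(G-v)$, the difference $\phi_\lbd(G_{s+1,t-1})-\phi_\lbd(G_{s,t})$ equals $\lbd^{s+t-2}(t-s-1)\phi_\lbd(G-u-v)$. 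This is negative for $\lbd\ge\rho(G_{s,t})$, since $\rho(G-u-v)<\rho(G_{s,t})$ by Lemma~\ref{lem:subgraph} and $s\ge t$. Lemma~\ref{poly}(i), applied with $f=\phi_\lbd(G_{s,t})$ and $g=\phi_\lbd(G_{s+1,t-1})$, then yields $\rho(G_{s,t})<\rho(G_{s+1,t-1})$. This argument works for any $G$, so I would actually present it as the main proof, with the Perron-vector argument above only as intuition.
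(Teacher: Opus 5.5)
The paper does not prove this lemma; it is quoted from \cite{Lin}, so there is no in-paper argument to compare yours against. Your characteristic-polynomial argument is a correct and complete proof on its own. With $\phi_\lambda(G-u)=\phi_\lambda(G-v)$, the pendant expansion becomes
\[
\phi_\lambda(G_{s,t})=\lambda^{s+t}\phi_\lambda(G)-(s+t)\lambda^{s+t-1}\phi_\lambda(G-u)+st\,\lambda^{s+t-2}\phi_\lambda(G-u-v),
\]
which is valid whether or not $u\sim v$. Only the last term changes under $(s,t)\mapsto(s+1,t-1)$. This route is more general than your Perron-vector sketch, since it needs only that $G-u$ and $G-v$ be cospectral, not an automorphism swapping $u$ and $v$. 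It also avoids the case $s=t$, where your contradiction step has no surplus pendant vertices to move; there you would need $x_u=x_v$ from symmetry and uniqueness of the Perron vector. So presenting the polynomial argument as the proof is the right call.

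Two small repairs are needed. First, Lemma~\ref{poly}(i) with $f=\phi_\lambda(G_{s,t})$ and $g=\phi_\lambda(G_{s+1,t-1})$ requires $f>g$ on $[\rho(G_{s+1,t-1}),\infty)$, i.e.\ beyond the largest root of $g$. You checked it on $[\rho(G_{s,t}),\infty)$ instead. Either observe that $G-u-v$ is also a proper subgraph of the connected graph $G_{s+1,t-1}$, so the difference is negative there as well. Or argue directly: at $\lambda_0=\rho(G_{s,t})$ you get $\phi_{\lambda_0}(G_{s+1,t-1})=(t-s-1)\lambda_0^{s+t-2}\phi_{\lambda_0}(G-u-v)<0$, and a monic polynomial that is negative at $\lambda_0$ has a root exceeding $\lambda_0$.

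Second, in the chain you call $G_{\lfloor m/2\rfloor,\lceil m/2\rceil}$ and $G_{\lceil m/2\rceil,\lfloor m/2\rfloor}$ (with $m=s+t$) isomorphic. Without a swapping automorphism that need not hold. However, the displayed formula is symmetric in $s$ and $t$, so the two graphs have the same characteristic polynomial, which is all the chain uses.
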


In~\cite{Brualdi}, Brualdi and Solheid proposed the following general problem, which has become one of the classic problems of spectral graph theory: Given a set $\mathcal{G}$ of graphs on $n$ vertices, determine the maximum and minimum value of the spectral radii of graphs in $\mathcal{G}$ and characterize the extremal graphs that attain these maximum and minimum values.

The maximization part of this problem has been extensively studied across various graph and tree classes bounded by specific invariants including the domination number~\cite{Chen, Stevanovic1}, number of cut vertices~\cite{Berman}, clique number~\cite{Feng1}, chromatic number~\cite{Nikiforov}, matching number~\cite{Feng2, Guo1}, diameter~\cite{Guo2, Dam1}, independence number~\cite{Das1, Ji, Stevanovic2}, dissociation number~\cite{Das2}, and zero forcing number~\cite{Das3}. However, the minimization counterpart has received significantly less attention. To date, minimization results have only been established for invariants such as the independence number~\cite{Lou, Xu} and diameter~\cite{Dam2}.

Recently, Pirzada et al.~\cite{Pirzada} demonstrated that the graph minimizing the spectral radius among all $n$-vertex graphs with a fixed matching number is necessarily a tree. Moreover, for each $\gamma \in \left\{1,2,3,\left\lfloor\frac{n}{2}\right\rfloor-1,\left\lfloor\frac{n}{2}\right\rfloor\right\}$, they characterized the unique tree on $n$ vertices with matching number $\gamma$ that attains the minimum spectral radius. Motivated by these recent developments, we consider the following problem and provide a complete answer.

\textbf{Problem:} Let $\mathcal{U}_{n,\gamma}$ be the class of all connected unicyclic graphs on $n$ vertices with matching number $\gamma$. Find the extremal graph that attains the minimum spectral radius among the class of unicyclic graphs with $\gamma \in \left\{ 1, 2, 3, \lfloor \frac{n}{2} \rfloor \right\}$.


\section{Minimal spectral radius of unicyclic graphs with $\gamma \in \left\{ 1, 2, \lfloor \frac{n}{2} \rfloor \right\}$}
For $\gamma = 1$ and $\gamma = \lfloor \frac{n}{2} \rfloor$, the only possible unicyclic graph is the triangle or $K_3$ and the cycle on $n$ vertices or $C_n$, respectively. Thus, it remains to determine the graph with minimum spectral radius in the class $\mathcal{U}_{n,2}$. Let $G_1=[v_1,v_2,v_3,v_4,v_1]$ be the cycle graph on $4$ vertices and $a$ and $b$ be two nonnegative integers. Let $G_1(a,b)$ be the graph obtained from $G_1$ by attaching $a$ pendant vertices to $v_2$ and $b$ pendant vertices to $v_4$ (See Figure \ref{fig:gamma-2-4}). Note that $\gamma(G_1(a,b))=2$. We now prove that among all graphs in $\mathcal{U}_{n,2}$, the graph $G_1\left( \left\lfloor \frac{n-4}{2} \right\rfloor, \left\lceil \frac{n-4}{2} \right\rceil\right)$ attains the minimum spectral radius.

\begin{figure}[h]
	\centering
	\begin{tikzpicture}[scale=1]
  \coordinate (A) at (0.00, 0.00);
  \coordinate (B) at (2.00, 0.00);
  \coordinate (C) at (0.00, 2.00);
  \coordinate (D) at (2.00, 2.00);
  \coordinate (G) at (-1.00, 1.50);
  \coordinate (H) at (-1.00, 2.50);
  \coordinate (E) at (3.00, 0.50);
  \coordinate (F) at (3.00, -0.50);

  \draw (0,0) node[anchor= east] {$v_1$};
  \draw (0,2) node[anchor= south] {$v_2$};
  \draw (2,0) node[anchor= north] {$v_4$};
  \draw (2,2) node[anchor= west] {$v_3$};

  \draw (C) -- (D);
  \draw (C) -- (A);
  \draw (A) -- (B);
  \draw (D) -- (B);
  \draw (H) -- (C);
  \draw (G) -- (C);
  \draw (B) -- (E);
  \draw (B) -- (F);
  \fill[black] (A) circle (2pt);
  \fill[black] (B) circle (2pt);
  \fill[black] (C) circle (2pt);
  \fill[black] (D) circle (2pt);
  \fill[black] (G) circle (2pt);
  \fill[black] (H) circle (2pt);
  \fill[black] (E) circle (2pt);
  \fill[black] (F) circle (2pt);

  \draw (-1.8,2.7) node[anchor=north west] {$a\Biggl\{$};  
  \draw (3,0.7) node[anchor=north west] {$\Biggl\}b$};

  \draw[loosely dotted, line width=1.5pt] (3.00, 0.50) -- (3.00, -0.50);
  \draw[loosely dotted, line width=1.5pt] (-1.00, 1.50) -- (-1.00, 2.50);
\end{tikzpicture}
	\caption{$G_1(a,b)$}
	\label{fig:gamma-2-4}
\end{figure}

\begin{theorem}\label{thm-gamma-2}
	Let $n \ge 6$ be an integer. For any graph $G \in \mathcal{U}_{n,2}$, we have $\rho(G) \ge \rho \left(G_1 \left( \lfloor \frac{n-4}{2} \rfloor, \lceil \frac{n-4}{2} \rceil\right) \right)$ and the equality holds if and only if $G \cong G_1 \left( \left\lfloor \frac{n-4}{2} \right\rfloor, \left\lceil \frac{n-4}{2} \right\rceil\right)$.
\end{theorem}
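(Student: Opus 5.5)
First I classify the graphs in $\mathcal{U}_{n,2}$ structurally, then compare the few resulting families using the lemmas of Section~1. Let $C$ be the unique cycle of $G$, of length $g$. Since $C_g$ contains a matching of size $\lfloor g/2\rfloor$, we need $g\le 5$. If $g=5$, any extra vertex (there is one, since $n\ge 6$) is adjacent to some cycle vertex $v$; pairing that edge with two independent edges of $C_5-v\cong P_4$ gives a matching of size $3$. So $g\in\{3,4\}$. Any vertex at distance $\ge 2$ from $C$ gives a pendant path of length $2$ which, together with a disjoint cycle edge, yields too large a matching; the exact bookkeeping depends on $g$. Carrying this out, the possible shapes are as follows. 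For $g=4$, all non-cycle vertices are pendant vertices attached to two opposite cycle vertices (attaching to adjacent cycle vertices $v_1,v_2$ gives the matching $\{v_1x,v_2y,v_3v_4\}$), so $G\cong G_1(a,b)$ with $a+b=n-4$. For $g=3$, $G$ is either a triangle with pendant vertices at a single cycle vertex, or a triangle $[v_1,v_2,v_3]$ with pendants at $v_1$ and $v_2$ only, or a triangle with pendant vertices at $v_1$ together with trees hanging from $v_1$ forming stars centred at neighbours of $v_1$. The precise list of these $g=3$ shapes will be worked out by the same matching argument.

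Next I reduce within each family. For $G_1(a,b)$, apply Lemma~\ref{lem-pendant} with $u=v_2$, $v=v_4$ in $G=C_4$; note $C_4-v_2\cong C_4-v_4$. This shows that $\rho(G_1(a,b))$ strictly decreases as $|a-b|$ decreases, so the balanced graph $G_1(\lfloor\frac{n-4}{2}\rfloor,\lceil\frac{n-4}{2}\rceil)$ is the unique minimizer among girth-$4$ graphs. For the triangle families I show that each contains, as a subgraph, a graph whose spectral radius already exceeds that of the balanced $G_1$. Alternatively I can move pendants with Lemma~\ref{lem:rho} to reach a canonical triangle graph $T$ with $\rho(T)$ no larger than the rest of its family. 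A cleaner route is to compare each triangle graph $H$ with $G_1(a,b)$ directly. Take $H$ to be the triangle with $a$ pendants at $v_1$ and $b$ pendants at $v_2$. Subdividing the edge $v_1v_2$ of $H$ produces $G_1(a,b)$, and $v_1v_2$ lies on an internal path whenever $a,b\ge 1$. Lemma~\ref{lem-sub} then gives $\rho(G_1(a,b))<\rho(H)$, and Lemma~\ref{lem-pendant} finishes the comparison.

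I expect the main obstacle to be the remaining triangle configurations where subdivision does not apply directly. These are the case with all pendants at one vertex, and the case with a star of depth $2$ hanging off a single cycle vertex. For these I would compare characteristic polynomials using Lemma~\ref{poly}. Both graphs have spectral radius close to $\sqrt{n}$-type values: the one-vertex case contains $S_{n-1}$ plus an extra edge, with $\rho>\sqrt{n-2}$, while the balanced $G_1$ has $\rho$ near $\sqrt{n/2+O(1)}$. So a subgraph bound via Lemma~\ref{lem:subgraph} should suffice. The plan is to find a star $S_k$ inside $H$ with $\sqrt{k-1}$ exceeding an explicit upper bound on $\rho$ of the balanced $G_1$. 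That upper bound comes from its characteristic polynomial $\lambda^{n-4}(\lambda^4-(n)\lambda^2+ \cdots)$, computed by quotient-matrix / equitable partition methods. Small $n$ are handled by direct computation. Finally, equality holds only in the balanced $G_1$ case, because every inequality used elsewhere is strict.
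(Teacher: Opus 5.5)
Your plan is correct in substance and follows the paper for girth $\ge 4$ and for the triangle with pendants on two cycle vertices. It differs on the graph the paper calls $U_2$: a triangle with a single outside neighbour $x$ of $v_3$, carrying the remaining $n-4$ vertices as pendants.

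\textbf{How you and the paper handle $U_2$.} The paper computes $\phi_\lambda(U_2)=\lambda^{n-5}(\lambda+1)f(\lambda)$ with $f(\lambda)=\lambda^4-\lambda^3-(n-1)\lambda^2+(n-3)\lambda+2(n-4)$. It then checks $f<0$ at the spectral radius of the balanced $G_1$, which works uniformly in $n$. Your route compares $\rho(U_2)>\rho(S_{n-2})=\sqrt{n-3}$ with $\rho^2=\frac n2+2$ ($n$ even) or $\frac{n-1}{2}+\frac{1+\sqrt{17}}{2}$ ($n$ odd). This avoids the characteristic polynomial of $U_2$, but it only settles $n\ge 10$. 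You must actually carry out the finite check for $n\in\{6,7,8,9\}$. It does hold, though with a small margin at $n=7$, where $f(\rho)\approx-0.12$. You were right that $U_2$ needs a separate argument: subdividing any triangle edge of $U_2$ yields matching number $3$.

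\textbf{Three things to tighten.}

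First, subdividing the edge between the two pendant-bearing vertices of your $H$ gives $G_1(a,b)$ on $n+1$ vertices. So Lemma~\ref{lem-pendant} does not finish the comparison directly. You must first delete a pendant vertex (Lemma~\ref{lem:subgraph}) to land back in $\mathcal{U}_{n,2}$, as the paper does. Your choice of edge is the right one. As printed, the paper subdivides $v_1v_2$ while the pendants sit at $v_1,v_3$. That would put them on adjacent vertices of the new $4$-cycle and give matching number $3$, so the paper's edge should be $v_1v_3$.

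Second, the triangle with all pendants at one vertex needs no star bound. The definition of internal path allows $v_1=v_k$, so the whole triangle is an internal path when exactly one cycle vertex has degree $\ge 3$. The same subdivide-then-delete argument therefore applies; the paper's $U_1$ already allows $n-3-a=0$.

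Third, your third girth-$3$ family is too broad. Pendants at $v_1$ together with a star centred at a neighbour of $v_1$, or two such stars, already force $\gamma\ge 3$. The only surviving configuration is $U_2$.

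With these fixes, the strict inequalities in Lemmas~\ref{lem-pendant}, \ref{lem:subgraph} and \ref{lem-sub} give the equality case exactly as you say.
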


\begin{proof}
    Let $G$ be a graph in $\mathcal{U}_{n,2}$. If the length of the unique cycle is greater than $4$, then $\gamma(G)>2$ (since $n \geq 6$). Thus, the girth of $G$ is at most $4$. Note that if the cycle length is $4$, then $G\cong G_1(a,b)$ for some nonnegative integers $a$ and $b$ such that $a+b=n-4$. Since $G_1-v_2\cong G_1-v_4$, Lemma \ref{lem-pendant} implies that $$\rho(G_1(a,b))\geq \rho\left(G_1\left(\left\lfloor\frac{a+b}{2}\right\rfloor,\left\lceil\frac{a+b}{2}\right\rceil\right)\right)=\rho\left(G_1\left(\left\lfloor\frac{n-4}{2}\right\rfloor,\left\lceil\frac{n-4}{2}\right\rceil\right)\right),$$
    with equality if and only if $G\cong G_1\left(\left\lfloor\frac{n-4}{2}\right\rfloor,\left\lceil\frac{n-4}{2}\right\rceil\right)$.

Now, suppose that the girth of $G$ is $3$ and $C_3=v_1v_2v_3v_1$ denotes the unique cycle of $G$. Let $S$ be the set consisting of the vertices of $G$ except $v_1,v_2,v_3$. Since $\gamma(G)=2$, either every vertex in $S$ is a pendant vertex, or $S$ contains exactly one quasi-pendant vertex. Thus, $G$ is isomorphic to one of the following two structures as shown in Figure \ref{mat_2_cycle_3}. Note that the graph $U_1$ is obtained from $C_3$ by attaching $a>0$ pendant vertices to $v_1$ and $n-3-a$ pendant vertices to $v_3$. The graph $U_2$ is obtained from $C_2$ by first attaching a pendant vertex $v_4$ to $v_3$ and then attaching $n-4$ pendant vertices to the vertex $v_4$. 

\begin{figure}[ht]
    \centering
    \begin{subfigure}[b]{0.3\textwidth}
        \centering
        \begin{tikzpicture}[scale=0.8]
  \coordinate (A) at (0.00, 0.00);
  \coordinate (B) at (2.00, 0.00);
  \coordinate (C) at (1.00, 2.00);
  \coordinate (D) at (-1.00, 0.50);
  \coordinate (E) at (-1.00, -0.50);
  \coordinate (F) at (3.00, 0.50);
  \coordinate (G) at (3.00, -0.50);

  \draw (0,0) node[anchor= north] {$v_1$};
  \draw (1,2) node[anchor= south] {$v_2$};
  \draw (2,0) node[anchor= north] {$v_3$};

  \draw (A) -- (C);
  \draw (C) -- (B);
  \draw (A) -- (B);
  \draw (B) -- (F);
  \draw (B) -- (G);
  \draw (D) -- (A);
  \draw (E) -- (A);
  \fill[black] (A) circle (2pt);
  \fill[black] (B) circle (2pt);
  \fill[black] (C) circle (2pt);
  \fill[black] (D) circle (2pt);
  \fill[black] (E) circle (2pt);
  \fill[black] (F) circle (2pt);
  \fill[black] (G) circle (2pt);

  \draw (-2,0.7) node[anchor=north west] {$a\biggl\{$};  
  \draw (3,0.7) node[anchor=north west] {$\biggl\}n-3-a$};

  \draw[loosely dotted, line width=1.5pt] (3.00, 0.50) -- (3.00, -0.50);
  \draw[loosely dotted, line width=1.5pt] (-1.00, 0.50) -- (-1.00, -0.50);
\end{tikzpicture}
        \caption{$U_1$}
    \end{subfigure}
    \hspace{3cm}
\begin{subfigure}[b]{0.3\textwidth}
        \centering
        \begin{tikzpicture}[scale=0.8]
  \coordinate (A) at (0.00, 0.00);
  \coordinate (B) at (2.00, 0.00);
  \coordinate (C) at (1.00, 2.00);
  \coordinate (D) at (3.00, 0.00);
  \coordinate (E) at (4.00, 0.50);
  \coordinate (F) at (4.00, -0.50);

  \draw (A) -- (C);
  \draw (C) -- (B);
  \draw (A) -- (B);
  \draw (B) -- (D);
  \draw (D) -- (E);
  \draw (D) -- (F);
  \fill[black] (A) circle (2pt);
  \fill[black] (B) circle (2pt);
  \fill[black] (C) circle (2pt);
  \fill[black] (D) circle (2pt);
  \fill[black] (E) circle (2pt);
  \fill[black] (F) circle (2pt);

  \draw (0,0) node[anchor= north] {$v_1$};
  \draw (1,2) node[anchor= south] {$v_2$};
  \draw (2,0) node[anchor= north] {$v_3$};

  \draw (4,0.7) node[anchor=north west] {$\biggl\}n-4$};
  \draw[loosely dotted, line width=1.5pt] (4.00, 0.50) -- (4.00, -0.50);
\end{tikzpicture}
        \caption{$U_2$}
    \end{subfigure}
    \caption{Unicyclic graphs with a $3$-cycle and $\gamma = 2$.}
    \label{mat_2_cycle_3}
\end{figure}

We consider the following two cases:
\begin{itemize}
    \item[Case 1:] Suppose that $G \cong U_1$. Let $G^*$ be the graph obtained from $G$ by subdividing the edge $v_1 v_2$. By Lemma~\ref{lem-sub}, it follows that $\rho(G^*) < \rho(G)$. Note that $G^*$ is a graph on $n+1$ vertices and the length of its unique cycle is $4$. Let $G^{**}$ be the subgraph of $G^*$ obtained by deleting a pendant vertex. Then $G^{**} \in \mathcal{U}_{n,2}$, and by Lemma~\ref{lem:subgraph} we have $\rho(G^{**}) < \rho(G^*) < \rho(G)$. Consequently, 
    \[
    \rho\left(G_1\left( \left\lfloor \frac{n-4}{2} \right\rfloor, \left\lceil \frac{n-4}{2} \right\rceil\right)\right) < \rho(U_1).
    \]

    \item[Case 2:] Suppose that $G \cong U_2$. The characteristic polynomial of $U_2$ is given by
    \[
    \phi_{\lambda}(U_2) = \lambda^{n-5}(\lambda+1) f(\lambda),
    \]
    where $f(\lambda) = \lambda^4-\lambda^3-(n-1)\lambda^2+(n-3)\lambda+2(n-4).$ Since the spectral radius of a graph is the largest root of its characteristic polynomial, $\rho(U_2)$ is the largest root of $f(\lambda)$.
    
    Next, we evaluate the spectral radius of the graph $G_1\left( \left\lfloor \frac{n-4}{2} \right\rfloor, \left\lceil \frac{n-4}{2} \right\rceil\right)$.
    
    \begin{itemize}
        \item[Subcase 2.1] ($n$ is even): Let $n=2k$. Then $G_1\left( \left\lfloor \frac{n-4}{2} \right\rfloor, \left\lceil \frac{n-4}{2} \right\rceil\right) \cong G_1(k-2,k-2)$. The characteristic polynomial of this graph is 
        \[
        \phi_{\lambda}\left(G_1(k-2,k-2)\right) = \lambda^{n-4}(\lambda^2-k+2)(\lambda^2-k-2).
        \]
        Thus, $\rho(G_1(k-2,k-2))= \sqrt{k+2}$. A direct calculation yields \[
    f(\sqrt{k+2}) = -k^2+5k-2+(k-5)\sqrt{k+2} < 0,
    \]
    for all $k \in \mathbb{N}$. Since $f(\lambda)$ is a monic polynomial, its largest root must be strictly greater than $\sqrt{k+2}$. Thus, we have
    \[
    \rho(G_1(k-2,k-2)< \rho(U_2).
    \]
        
        \item[Subcase 2.2] ($n$ is odd): Let $n=2k+1$. Then $G_1\left( \left\lfloor \frac{n-4}{2} \right\rfloor, \left\lceil \frac{n-4}{2} \right\rceil\right) \cong G_1(k-2,k-1)$. Its characteristic polynomial is given by
        \[
        \phi_{\lambda}\left(G_1(k-2,k-1)\right) = \lambda^{n-4}\left(\lambda^4-(2k+1)\lambda^2+(k^2+k-4)\right),
        \]
        which implies that $\rho(G_1(k-2,k-1)) = \sqrt{k+\frac{\sqrt{17}+1}{2}}$.
    \end{itemize}

    In a similar way to Case 3.1, we have
    \begin{align*}
        &f\left(\sqrt{k+\frac{\sqrt{17}+1}{2}}\right) \\&= -k^2+4k+\left(\frac{\sqrt{17}+1}{2}\right)^2-6+\left(k-2-\frac{\sqrt{17}+1}{2}\right) \cdot \sqrt{k+\frac{\sqrt{17}+1}{2}} < 0,
    \end{align*}
    for all $k \in \mathbb{N}$. Thus, $  \rho(G_1(k-2,k-1)< \rho(U_2).$
\end{itemize}

This completes the proof.

\end{proof}

\section{Minimal spectral radius of unicyclic graphs with $\gamma = 3$}
The main objective of this section is to determine the minimum spectral radius among all graphs in $\mathcal{U}_{n,3}$. For our purpose, let us define the following class of unicyclic graphs with matching number $3$.
\bd\label{def:cycle-4-1} Let $G_2$ be a unicyclic graph on $6$ vertices obtained from a path $P_6=[v_1,v_2,\ldots,v_6]$ by adding an edge between the vertices $v_3$ and $v_6$. Let $a$, $b$ and $c$ be nonnegative integers and $G_2(a,b,c)$ be the class of graphs on $n$ vertices obtained from $G_2$ by attaching $a$, $b$, and $c$ pendant vertices at the vertices $v_1$, $v_3$ and $v_5$, respectively, where $a+b+c = n-6$ {\rm(}see Figure~\ref{uni_min_1}{\rm)}. \ed
\begin{figure}[h]
	\centering
	\begin{tikzpicture}[scale=1]
  \coordinate (A) at (0.00, 0.00);
  \coordinate (B) at (2.00, 0.00);
  \coordinate (C) at (0.00, 2.00);
  \coordinate (D) at (2.00, 2.00);
  \coordinate (E) at (1.50, -1.00);
  \coordinate (F) at (2.50, -1.00);
  \coordinate (G) at (-1.00, 1.50);
  \coordinate (H) at (-1.00, 2.50);
  \coordinate (I) at (3.00, 0.00);
  \coordinate (J) at (4.00, 0.00);
  \coordinate (K) at (5.00, 0.50);
  \coordinate (L) at (5.00, -0.50);

  \draw (0,0) node[anchor= east] {$v_6$};
  \draw (0,2) node[anchor= south] {$v_5$};
  \draw (2,0) node[anchor= south west] {$v_3$};
  \draw (2,2) node[anchor= west] {$v_4$};
  \draw (3,0) node[anchor= south] {$v_2$};
  \draw (4,0) node[anchor= south] {$v_1$};

  \draw (2.7,-0.98) node[anchor=north west, rotate=-90] {$\Biggl\}$};
  \draw (1.8,-1.25) node[anchor=north west] {$b$};
  \draw (5,0.7) node[anchor=north west] {$\Biggl\}a$};
  \draw (-1.7,2.7) node[anchor=north west] {$c\Biggl\{$};

  \draw (C) -- (D);
  \draw (C) -- (A);
  \draw (A) -- (B);
  \draw (D) -- (B);
  \draw (B) -- (I);
  \draw (I) -- (J);
  \draw (J) -- (K);
  \draw (J) -- (L);
  \draw (H) -- (C);
  \draw (G) -- (C);
  \draw (B) -- (E);
  \draw (B) -- (F);
  \fill[black] (A) circle (2pt);
  \fill[black] (B) circle (2pt);
  \fill[black] (C) circle (2pt);
  \fill[black] (D) circle (2pt);
  \fill[black] (E) circle (2pt);
  \fill[black] (F) circle (2pt);
  \fill[black] (G) circle (2pt);
  \fill[black] (H) circle (2pt);
  \fill[black] (I) circle (2pt);
  \fill[black] (J) circle (2pt);
  \fill[black] (K) circle (2pt);
  \fill[black] (L) circle (2pt);

  \draw[loosely dotted, line width=1.5pt] (5,0.5) -- (5,-0.5);
  \draw[loosely dotted, line width=1.5pt] (1.50, -1.00) -- (2.50, -1.00);
  \draw[loosely dotted, line width=1.5pt] (-1.00, 1.50) -- (-1.00, 2.50);

\end{tikzpicture}
	\caption{The unicyclic graph $G_2(a,b,c)$.}
	\label{uni_min_1}
\end{figure}

Next, we define a graph that belongs to the class of graphs $G_2(a,b,c)$, which will be used later.
\begin{defn}
	If $n\geq 12$ is an integer such that $n=3k+l$, where $k$ is a positive integer and $l\in \{-1,0,1\}$, then let us denote the graph $G_2(k,k+l-4,k-2)$ by $U_{n,3}^*$.
\end{defn}

We begin this section by proving that if $a$, $b$, and $c$ are nonnegative integers satisfying $a+b+c=n-6$ and $n\geq 12$, then $\rho(G_2(a,b,c))\geq \rho(U_{n,3}^*)$ with equality if and only if $G_2(a,b,c))\cong U_{n,3}^*$. Since the spectral radius of a graph equals the largest root of its characteristic polynomial, we begin by computing $\phi_\lbd(G_2(a,b,c))$, the characteristic polynomial of $G_2(a,b,c)$. Observe that
\begin{align*}
    \phi_\lbd(G_2(a,b,c))=&\lbd^{n-6}\big(\lbd^6-n\lbd^4+(ab+bc+ac+5a+3b+4c+6)\lbd^2\\&-(abc+2ab+3ac+bc+2a+2b+2c)\big).
\end{align*}

Let
$$f(a,b,c)=ab+bc+ac+5a+3b+4c+6, \quad
g(a,b,c)=abc+2ab+3ac+bc+2a+2b+2c,$$
and define
$$\psi_\lbd(a,b,c)=\lbd^3-n\lbd^2+f(a,b,c)\lbd-g(a,b,c).$$
Then $\psi_\lbd(a,b,c)$ is a real-rooted monic polynomial. Let $\mu_1(a,b,c)$ denote the largest root of $\psi_\lbd(a,b,c)$. Since $\rho(G_2(a,b,c))=\sqrt{\mu_1(a,b,c)}$, minimizing $\rho(G_2(a,b,c))$ is equivalent to minimizing $\mu_1(a,b,c)$, where $a+b+c$ is a constant. Note that the spectral radius of the star graph $S_k$ on $k$ vertices is $\rho(S_k)=\sqrt{k-1}$. Since $G_2(a,b,c)$ contains $S_{a+2}$, $S_{b+4}$, and $S_{c+3}$ as induced subgraphs, it follows that
$\mu_1(a,b,c)>a+1$, $\mu_1(a,b,c)>b+3$, and $\mu_1(a,b,c)>c+2.$

We first investigate the behavior of $\mu_1(a,b,c)$ under various distributions of $a$, $b$, and $c$ through the following preliminary results.
\bl\label{qq} If $a$ is a nonnegative integer, then $\mu_1(a,a,a)=a+3+\sqrt{3}$.\el 
\pf Note that $\mu_1(a,a,a)$ is the largest root of $\lbd^3-3(a+2)\lbd^2+3(a^2+4a+2)\lbd-a(a^2+6a+6)$. Observe that $a$ is a root of this polynomial. Thus,
\begin{align*}
&\lbd^3-3(a+2)\lbd^2+3(a^2+4a+2)\lbd-a(a^2+6a+6)\\
&=(\lbd-a)(\lbd^2-2(a+3)\lbd+(a^2+6a+6))\\
&=(\lbd-a)(\lbd-a-3-\sqrt{3})(\lbd-a-3+\sqrt{3}).
\end{align*}
Hence, $\mu_1(a,a,a)=a+3+\sqrt{3}$.
\qe

\bl\label{ab2} If $a\geq 4$ is an integer, then $\mu_1(a,a-2,a-1)>\mu_1(a+1,a-3,a-1)$. \el 
\pf Direct calculation shows that 
\begin{align*}
\psi_\lbd(a,a-2,a-1)&=\lbd^3-3(a+1)\lbd^2+(3a^2+6a-2)\lbd-(a^3+3a^2-2a-4)\\
&=(\lbd-a-1)(\lbd^2-2(a+1)\lbd+(a^2+2a-4))\\
&=(\lbd-a-1)(\lbd-a-1-\sqrt{5})(\lbd-a-1+\sqrt{5}).
\end{align*}
The roots of the polynomial $\psi_\lbd(a,a-2,a-1)$ are $a+1$, $a+1+\sqrt{5}$, and $a+1-\sqrt{5}$. Hence, $\mu_1(a,a-2,a-1)=a+1+\sqrt{5}$.

Similarly, we have
\begin{align*}
	\psi_\lbd(a+1,a-3,a-1)&=\lbd^3-3(a+1)\lbd^2+3(a^2+2a-1)\lbd-(a^3+3a^2-3a-9)\\
	&=(\lbd-a-3)(\lbd^2-2a\lbd+(a^2-3))\\
	&=(\lbd-a-3)(\lbd-a-\sqrt{3})(\lbd-a+\sqrt{3}).
\end{align*}
Thus, the largest root of the polynomial $\psi_\lbd(a+1,a-3,a-1)$ is $\mu_1(a+1,a-3,a-1)=a+3$, which implies the desired inequality.
\qe

\bl\label{ab3} If $a\geq 5$ is an integer, then $\mu_1(a,a-3,a-1)>\mu_1(a+1,a-4,a-1)$. \el 
\pf It can be verified that 
\begin{align*}
\psi_\lbd(a,a-3,a-1)&=\lbd^3-(3a+2)\lbd^2+(3a^2+4a-4)\lbd-(a^3+2a^2-4a-5)\\
&=(\lbd-a-1)\left(\lbd-a-\frac{1+\sqrt{21}}{2}\right)\left(\lbd-a-\frac{1-\sqrt{21}}{2}\right).
\end{align*}
It follows that $\mu_1(a,a-3,a-1)=a+\frac{1+\sqrt{21}}{2}$.

Again, 
\begin{align*}
\psi_\lbd(a+1,a-4,a-1)&=\lbd^3-(3a+2)\lbd^2+(3a^2+4a-6)\lbd-(a^3+2a^2-6a-11)\\
&=(\lbd-a)^3-2(\lbd-a)^2-6(\lbd-a)+11.
\end{align*}
Observe that $\dfrac{d^2}{d\lbd^2}\psi_\lbd(a+1,a-4,a-1)=6(\lbd-a)-4>0$ for $\lbd\geq a+\frac{1+\sqrt{21}}{2}$. Thus, $$\dfrac{d}{d\lbd}\psi_\lbd(a,a-3,a-1)\geq 3\left(\frac{1+\sqrt{21}}{2}\right)^2-4\left(\frac{1+\sqrt{21}}{2}\right)-6=\frac{17-\sqrt{21}}{2}>0$$ for $\lbd\geq a+\frac{1+\sqrt{21}}{2}$. It follows that $$\psi_\lbd(a,a-3,a-1)\geq \left(\frac{1+\sqrt{21}}{2}\right)^3-2\left(\frac{1+\sqrt{21}}{2}\right)^2-6\left(\frac{1+\sqrt{21}}{2}\right)+11=5-\sqrt{21}>0$$ for $\lbd\geq a+\frac{1+\sqrt{21}}{2}$. Since $\psi_\lbd(a+1,a-4,a-1)=5-\sqrt{21}>0$ when $\lbd=\mu_1(a,a-3,a-1)$, we have $\mu_1(a+1,a-4,a-1)<\mu_1(a,a-3,a-1)$.\qe

\bl\label{ab4ac3} If $a\geq 5$ is an integer, then $\mu_1(a,a-4,a-3)>\mu_1(a,a-5,a-2)$. \el 
\pf Observe that  $\psi_\lbd(a,a-4,a-3)-\psi_\lbd(a,a-5,a-2)=\lbd-a-2$ and $\psi_{a+2}(a,a-5,a-2)=2$. Again, $\frac{d}{d\lbd}\psi_\lbd(a,a-5,a-2)=3(\lbd-a)^2+2(\lbd-a)-7\geq 9$ for $\lbd\geq a+2$. Thus, $\mu_1(a,a-5,a-2)<a+2$. Hence, the result follows from Lemma \ref{poly}(ii).
\qe

\bl\label{ab4} If $a$ and $b$ are nonnegative integers such that $a-b\geq 4$, then $\mu_1(a,b,a-1)>\mu_1(a,b+1,a-2)$. \el 
\pf It is easy to see that \begin{align*}
    \psi_\lbd(a,b,a-1)=&\lbd^3-(2a+b+5)\lbd^2+(a^2+2ab+8a+2b+2)\lbd-(a^2b+3a^2+2ab+a+b-2)\\
    =&\left(\lbd-a-1\right)\left(\lbd^2-(a+b+4)\lbd+(ab+3a+b-2)\right)
\end{align*} and $$\psi_\lbd(a,b+1,a-2)-\psi_\lbd(a,b,a-1)=(a-b-3)\left(\lbd-\left(a+1+\frac{1}{a-b-3}\right)\right).$$
The largest root of the polynomial $\psi_\lbd(a,b,a-1)$ is  given by $$\mu_1(a,a-4,a-3)=\frac{a+b+4+\sqrt{(a-b-2)^2+20}}{2}.$$ We aim to show that $ \mu_1(a,a-4,a-3)>a+1+\frac{1}{a-b-3}$, which reduces to the inequality $$ \sqrt{(a-b-2)^2+20}-(a-b-2)-\frac{2}{a-b-3}>0.$$

Let $t=a-b-2$. Then $t\geq 2$. Thus, we have $\frac{1}{t-1}+\frac{1}{(t-1)^2}<4$. A straightforward calculation implies that $$\sqrt{t^2+20}>t+\frac{2}{t-1}.$$

Thus, we have $\psi_\lbd(a,b+1,a-2)>\psi_\lbd(a,b,a-1)$ for $\lbd\geq \mu_1(a,b,a-1)$. By Lemma \ref{poly}(i), we conclude $\mu_1(a,b,a-1)>\mu_1(a,b+1,a-2)$.
\qe

\bl\label{ab5ac3}  Let $a$, $b$, and $c$ be nonnegative integers such that $a-b\geq 5$ and $a-c\geq 3$. If $a-c$ is odd, then $\mu_1(a,b,c)>\mu_1(a-1,b+1,c)$. \el 
\pf Since $a-c\geq 3$, we have $\mu_1(a,b,c)>a+1\geq c+4$. Further, we have $$\psi_\lbd(a-1,b+1,c)-\psi_\lbd(a,b,c)=(a-b-3)\left(\lbd-\left(c+2+\frac{4}{a-b-3}\right)\right).$$
Since $c+2+\frac{4}{a-b-3}\leq c+4$, we have $\psi_\lbd(a-1,b+1,c)>\psi_\lbd(a,b,c)$ for $\lbd\geq \mu_1(a,b,c)$. Thus, Lemma \ref{poly}(i) follows the required inequality.
\qe

\bl\label{a00}  If $a\geq 6$ is an integer, then $\mu_1(a,0,0)>\mu_1(a-1,0,1)$. \el 
\pf Since $\psi_\lbd(a-1,0,1)-\psi_\lbd(a,0,0)=(a-2)\left(\lbd-\left(3+\frac{3}{a-2}\right)\right)$ and $3+\frac{3}{a-2}<\mu_1(a,0,0)$, the result follows from Lemma \ref{poly}(i).
\qe

\bl\label{0b0}  If $b\geq 6$ is an integer, then $\mu_1(0,b,0)>\mu_1(0,b-1
,1)$. \el 
\pf Since $\psi_\lbd(0,b-1,1)-\psi_\lbd(0,b,0)=b\left(\lbd-\left(1-\frac{1}{b}\right)\right)$ and $1-\frac{1}{b}<\mu_1(0,b,0)$, the result follows from Lemma \ref{poly}(i).\qe

%
%

We are now ready to determine the minimum value of $\mu_1(a,b,c)$ subject to $a+b+c=n-6$, where $n\geq 12$ is fixed. We begin by characterizing the structure of the minimizing parameters.
\bl\label{main2} Let $n\geq 6$ be a fixed integer and
$$\mu_1(p,q,r)=\min\{\mu_1(a,b,c)\mid a,b,c\ge 0 \text{ are integers and } a+b+c=n\}.$$
Then $r=p-2$ and $p-q\geq 3$. \el 
\pf We first show that $r\geq q+1$. If $q=0$, then Lemma \ref{a00} implies that $r\geq 1$. On the contrary, assume that $r\leq q$ and $q\geq 1$. A direct computation gives
$$\psi_\lbd(p,q-1,r+1)-\psi_\lbd(p,q,r)=(q-r)\lbd-((q-r)(p+1)-1).$$
Since $p+1<\mu_1(p,q,r)$, it follows that $\psi_\lbd(p,q-1,r+1)>\psi_\lbd(p,q,r)$ for $\lbd\geq \mu_1(p,q,r)$. By Lemma~\ref{poly}(i), we obtain $\mu_1(p,q-1,r+1)<\mu_1(p,q,r)$, which contradicts the minimality of $\mu_1(p,q,r)$. Hence, $r\geq q+1$. 

Next, we prove that $p\geq r+1$. If $r=0$, them Lemma \ref{0b0} implies that $p\geq 1$. On the contrary, assume that $r\geq p$ and $r\geq 1$. A similar calculation as above implies that $$\psi_\lbd(p+1,q,r-1)>\psi_\lbd(p,q,r)\text{ for }\lbd\geq \mu_1(p,q,r).$$ Consequently, we have $\mu_1(p+1,q,r-1)<\mu_1(p,q,r)$ (by Lemma \ref{poly}(i)). It follows that $p\geq r+1$. 

We now consider the following two cases:

\bdsc
\item{\rm Case 1.} Suppose that $p-r$ is  an even positive integer. If $p-r=2$, then we are done. Let $p-r=2+2t$, where $t\ge 1$ is an integer. Then $$\psi_\lbd(p-1,q,r+1)-\psi_\lbd(p,q,r)=2t\left(\lbd-\left(q+3+\frac{3}{2t}\right)\right).$$ Since $t\geq 1$, Lemma \ref{qq} implies that $q+3+\frac{3}{2t}<q+3+\sqrt{3}=\mu_1(q,q,q)<\mu_1(p,q,r)$. Thus, $\psi_\lbd(p-1,q,r+1)>\psi_\lbd(p,q,r)$ for $\lbd\geq\mu_1(p,q,r)$. By Lemma \ref{poly}(i), it follows that $\mu_1(p-1,q,r+1)<\mu_1(p,q,r)$. By successively applying the above argument, we have
$$\mu_1(p,q,r)>\mu_1(p-1,q,r+1)>\cdots>\mu_1(p-t,q,r+t)=\mu_1(p_1,q_1,r_1),$$ where $r_1=p_1-2$ and $p_1-q_1=p-q-t\geq p-r-t+1=3+t\geq 4$.

\item{\rm Case 2.} Suppose that $p-r$ is an odd positive integer. First, assume $p-r=1$. Then $p-q \geq 2$. If $p-q=2$, then by Lemma \ref{ab2}, $\mu_1(p,p-2,p-1)>\mu_1(p+1,p-3,p-1).$ If $p-q=3$, then Lemma \ref{ab3} follows that $\mu_1(p,p-3,p-1)>\mu_1(p+1,p-4,p-1).$ If $p-q\geq 4$, then from Lemma \ref{ab4}, $\mu_1(p,q,p-1)>\mu_1(p,q+1,p-2)=\mu_1(p_1,q_1,r_1)$, where $p_1=r_1+2$ and $p_1-q_1\geq 3$

Now, assume $p-r=1+2t$, where $t\geq 1$ is an integer. Note that $p-q\geq 4$. If $p-q=4$, then $p-r=3$ and
$$\mu_1(p,q,r)=\mu_1(p,p-4,p-3)>\mu_1(p,p-5,p-2) \quad\text{(by Lemma \ref{ab4ac3})}$$
If $p-q\geq 5$, then Lemma \ref{ab5ac3} implies that $\mu_1(p,q,p-1-2t)>\mu_1(p-1,q+1,p-1-2t)$. If $t=1$, we are done. Otherwise, repeating the arguments from Case 1, we obtain 
$$\mu_1(p-1,q+1,p-1-2t)>\mu_1(p-t,q+1,p-2-t)=\mu_1(p_1,q_1,r_1),$$ where $p_1=r_1+2$ and $p_1-q_1=p-q-t-1\geq p-r-t=t+1\geq 3$ (since $p-r=2t+1\geq 5$).
\edsc
This completes the proof.\qe

Next, we minimize $\mu_1(a,b,c)$ with the condition $c=a-2$. For fixed $n$, $\psi_\lbd(a,n-2a-4,a-2)$ can be written as $\psi_\lbd(a)$ and $\mu_1(a,n-2a-4,a-2)$ can be written as $\mu_1(a)$. In the following result, we determine the minimum value of $\mu_1(a)$.
\bl\label{main1} Let $a$ and $n$ be integers such that $n\geq 12$ and $2\leq a\leq \frac{n-4}{2}$. Suppose that $\mu_1(a)$ is the largest root of the real-rooted polynomial 
$$\psi_\lbd(a)=\lbd^3-n\lbd^2+(-3a^2+(2n-3)a+n-6)\lbd-(-2a^3+(n-3)a^2+(n-6)a-4).$$
If $n=3k+l$ for some integers $k\geq 4$ and $l\in \{-1,0,1\}$, then $\mu_1(a)\geq\mu_1(k)$ with equality if and only if $a=k$.
\el
\pf Let $\psi_\lbd(a)=\lbd^3-n\lbd^2+f(a)\lbd-g(a)$. Then $f(a)=-3a^2+(2n-3)a+n-6$ and $g(a)=-2a^3+(n-3)a^2+(n-6)a-4$.

It is easy to verify that $$f(a)-f(a-1)=2n-6a$$
and $$g(a)-g(a-1)=a(2n-6a)-5.$$
For $a\geq 3$, \be\label{eq1}\psi_\lbd(a)-\psi_\lbd(a-1)=(2n-6a)\lbd-(a(2n-6a)-5).\ee
Observe that $\mu_1(a-1)>a$. We now have the following three cases:

\bdsc
\item{Case 1.} Let $n=3k-1$ for some integer $k\geq 4$. Then \eqref{eq1} becomes $$\psi_\lbd(a)-\psi_\lbd(a-1)=(6(k-a)-2)\lbd-(a(6(k-a)-2)-5).$$

If $k\geq a+1$, then $\psi_\lbd(a)>\psi_\lbd(a-1)$ for $\lbd\geq \mu_1(a-1)$. From Lemma \ref{poly}(i), it follows that $\mu_1(a)<\mu_1(a-1)$. Hence, $\mu_1(k-1)<\mu_1(k-1-r)$ for every integer $1\leq r\leq k-3$.

If $k\leq a-1$, then $a+\frac{5}{6(a-k)+2}<a+1<\mu_1(a)$. Thus, we have $\psi_\lbd(a-1)>\psi_\lbd(a)$ for $\lbd\geq\mu_1(a)$. From Lemma \ref{poly}(i), it follows that $\mu_1(a-1)<\mu(a)$. Hence, $\mu_1(k)<\mu_1(k+r)$ for every integer $1\leq r\leq \lfloor\frac{n-4}{2}\rfloor-k$.

If $k=a$, then $\psi_\lbd(a)=(\lbd-a)^3+(\lbd-a)^2-7(\lbd-a)+4$. A straightforward computation shows that
$$\psi_{a+2}(a)=2>0$$ and $$\frac{d}{d\lbd}\psi_{\lbd}(a)=3(\lbd-a)^2+2(\lbd-a)-7\geq 9>0\text{ for } \lbd\geq a+2.$$
Thus, $\mu_1(a)<a+2$. Since $\psi_\lbd(a-1)-\psi_\lbd(a)=2\left(\lbd-\left(a+\dfrac{5}{2}\right)\right)$, Lemma \ref{poly}(ii) implies that $\mu_1(a)<\mu_1(a-1)$, i.e., $\mu_1(k)<\mu_1(k-1)$. It follows that $\mu_1(a)\geq \mu_1(k)$ with equality if and only if $a=k$. 

\item {Case 2.} Let $n=3k$ for some positive integer $k\geq 4$. Then Equation \eqref{eq1} reduces to $$\psi_\lbd(a)-\psi_\lbd(a-1)=6(k-a)\lbd-(6a(k-a)-5).$$

Note that $a-\frac{5}{6(k-a)}<a<\mu_1(a-1)$ for $k>a$. Thus, if $a\leq k$, then $\psi_\lbd(a)>\psi_\lbd(a-1)$ for $\lbd\geq \mu_1(a-1)$. Consequently, we have $\mu_1(a)<\mu_1(a-1)$ when $k\geq a$. Hence, $\mu_1(k)<\mu_1(k-r)$ for every integer $1\leq r\leq k-2$.

Now, if $a\geq k+1$, then we have $\frac{5}{6(a-k)}<1$ and$$\psi_\lbd(a)-\psi_\lbd(a-1)=6(k-a)\left(\lbd-\left(a+\frac{5}{6(a-k)}\right)\right).$$ 
Thus, for $\lbd\geq \mu_1(a)$, we have $\psi_\lbd(a-1)>\psi_\lbd(a)$. It follows that $\mu_1(a-1)<\mu_1(a)$ for $a\geq k+1$. Hence, $\mu_1(k)<\mu_1(k+r)$ for every integer $1\leq r\leq \lfloor\frac{n-4}{2}\rfloor-k$. Combining we get $\mu_1(a)\geq \mu_1(k)$ with equality if and only if $a=k$.

\item {Case 3.} Let $n=3k+1$ for some positive integer $k$. Then Equation \eqref{eq1} reduces to $$\psi_\lbd(a)-\psi_\lbd(a-1)=(6(k-a)+2)\left(\lbd-\left(a-\frac{5}{6(k-a)+2}\right)\right).$$

If $a\leq k$, then $a-\frac{5}{6(k-a)+2}<a<\mu_1(a-1)$. Thus, $\psi_\lbd(a)>\psi_\lbd(a-1)$ for $\lbd\geq\mu_1(a-1)$. Consequently, we have $\mu_1(a)<\mu_1(a-1)$. Hence, $\mu_1(k)<\mu_1(k-r)$ for every integer $1\leq r\leq k-2$.

Now, if $a\geq k+2$, then $a+\frac{5}{6(a-k)-2}<a+1$. Notice that $$\psi_\lbd(a)-\psi_\lbd(a-1)=(6(k-a)+2)\left(\lbd-\left(a+\frac{5}{6(a-k)-2}\right)\right).$$
Thus, for $\lbd\geq\mu_1(a)$, we have $\psi_\lbd(a-1)>\psi_\lbd(a)$. It follows that $\mu_1(a-1)<\mu_1(a)$. Hence, $\mu_1(k+1)<\mu_1(k+1+r)$ for every integer $1\leq r\leq \lfloor\frac{n-4}{2}\rfloor-k-1$. 

Finally if $a=k+1$, then $\psi_\lbd(a-1)-\psi_\lbd(a)=4(\lbd-(a+\frac{5}{4}))$. One can verify that $\psi_{a+\frac{5}{4}}(a)<0$. Thus, $a+\frac{5}{4}<\mu_1(a)$. It follows that $\psi_\lbd(a-1)>\psi_\lbd(a)$ for $\lbd\geq\mu_1(a)$. Therefore, $\mu_1(k)<\mu_1(k+1)$ (by Lemma \ref{poly}(i)). Combining all of them, we get $\mu_1(a)\geq \mu_1(k)$ with equality if and only if $a=k$.
\edsc
This completes the proof. \qe

\bl \label{cycle_4_bound}
Let $n\geq 14$ be an integer such that $n=3a+l$, where $k\geq 4$ and $l\in \{-1,0,1\}$. Then 
\bdsc
\item {\rm (i)} $\mu_1(a,a-5,a-2)< a+2$ if $l=-1$,
\item {\rm (ii)} $\mu_1(a,a-4,a-2)= a+2$ if $l=0$, and
\item {\rm (iii)} $\mu_1(a,a-3,a-2)< a+3$ if $l=1$.
\edsc
\el
\pf Notice that $\mu_1(a,a-4+l,a-2)$ is the largest root of the polynomial $$f(\lambda)=\lbd^3-3a\lbd^2+(3a^2-6)\lbd-(a^3-6a-4)-l(\lbd^2-(2a+1)\lbd+a(a+1)).$$ 
A direct calculation yields $f(a+2)=-2l$, $f(a+3)=13-6l$, $f'''(a+2)=6$, $f''(a+2)=12-2l$ and $f'(a+2)=6-3l$. Thus, $f$ is a strictly increasing function for $x\geq a+2$. Hence, the required result follows.\qe

In the next result, we find the minimum possible value of $\mu_1(a,b,c)$ and for which values of $a$, $b$, and $c$ it is attained.
\bt\label{main3} Let $n\geq 12$ be an integer. Suppose that $a$, $b$, and $c$ are three nonnegative integers such that $n=a+b+c+6$. If $n=3k+l$ for some integers $k\geq 4$ and $l\in \{-1,0,1\}$, then $\mu_1(a,b,c)\geq \mu_1(k,k+l-4,k-2)$ with equality if and only if $a=k$, $b=k+l-4$ and $c=k-2$.
\et
\pf Let $\mu_1(p,q,r)=\min\{\mu_1(a,b,c)\mid a,b,c\in \mathbb{Z}_{\geq 0}, \text{ and }a+b+c=n-6\}.$  By Lemma \ref{main2}, $p=r+2$ and $p-q\geq 3$. Now, by Lemma \ref{main1}, we get $p=k$. Thus, $q=n-4-2p=3k+l-4-2k=k+l-4$. This completes the proof.\qe

Since the square root of the largest root of $\psi_\lbd(a,b,c)$ is $\rho(G_2(a,b,c))$, the following is an immediate consequence of Theorem \ref{main3}.

\begin{theorem}\label{thm:min-4-cycle}
	Let $a$, $b$, and $c$ be three nonnegative integers such that $a+b+c+6=n\geq 12$. Then $\rho(G_2(a,b,c) \geq \rho(U_{n,3}^*)$ with equality if and only if $G_2(a,b,c)\cong U_{n,3}^*$.
\end{theorem}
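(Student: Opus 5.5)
The plan is to reduce the statement directly to Theorem~\ref{main3} through the characteristic polynomial computed earlier in this section. For nonnegative integers $a,b,c$ with $a+b+c=n-6$ we have
\[
\phi_\lbd(G_2(a,b,c))=\lbd^{n-6}\,\psi_{\lbd^2}(a,b,c).
\]
Hence every eigenvalue of $G_2(a,b,c)$ is either $0$ or $\pm\sqrt{\mu}$, where $\mu$ is a root of $\psi_\lbd(a,b,c)$. Since $\psi_\lbd(a,b,c)$ is real-rooted and $G_2(a,b,c)$ is connected, the spectral radius is the largest eigenvalue. The first step is to confirm that the largest root $\mu_1(a,b,c)$ is positive, so that $\rho(G_2(a,b,c))=\sqrt{\mu_1(a,b,c)}$. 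This follows from the observation that $G_2(a,b,c)$ contains stars as induced subgraphs, which gives $\mu_1(a,b,c)>a+1\ge 1$. Because $x\mapsto\sqrt{x}$ is strictly increasing on $[0,\infty)$, comparing spectral radii of graphs in the family $G_2(\cdot,\cdot,\cdot)$ is then the same as comparing the corresponding values $\mu_1$.

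Next, write $n=3k+l$ with $l\in\{-1,0,1\}$. The condition $n\ge 12$ forces $k\ge 4$ in every case: for $l=1$ we need $3k+1\ge 12$, so $k\ge 4$, and the cases $l=0,-1$ are similar. We also check that $U^*_{n,3}=G_2(k,k+l-4,k-2)$ has nonnegative parameters summing to $n-6$. The sum is $3k+l-6=n-6$, and $k+l-4\ge 0$ because $k\ge 4$ and $l\ge -1$. There is a small gap here: $k=4$, $l=-1$ would give $n=11$, which is excluded, so the parameters are admissible. Theorem~\ref{main3} then gives
\[
\mu_1(a,b,c)\ge\mu_1(k,k+l-4,k-2),
\]
with equality exactly when $(a,b,c)=(k,k+l-4,k-2)$. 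Taking square roots yields $\rho(G_2(a,b,c))\ge\rho(U^*_{n,3})$.

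For the equality statement, one direction is immediate: if $G_2(a,b,c)\cong U^*_{n,3}$, the spectral radii are equal. For the converse, equality of the spectral radii gives $\mu_1(a,b,c)=\mu_1(k,k+l-4,k-2)$. By the uniqueness in Theorem~\ref{main3}, this forces the triple $(a,b,c)$ to equal $(k,k+l-4,k-2)$, and therefore $G_2(a,b,c)=U^*_{n,3}$ as labeled constructions.

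The only delicate point, and the one I expect to be the main obstacle, is the meaning of ``$\cong$'' in the conclusion: a priori distinct triples might give isomorphic graphs. This turns out to be harmless, because the equality argument produces identical parameters, not merely isomorphic graphs. Otherwise the proof is a direct translation of Theorem~\ref{main3}.
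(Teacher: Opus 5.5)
Your proposal is correct and follows the paper's route. The paper also derives this theorem as an immediate consequence of Theorem~\ref{main3} via $\rho(G_2(a,b,c))=\sqrt{\mu_1(a,b,c)}$, and your extra checks only make explicit what the paper leaves implicit: that $k\ge 4$ (indeed $k\ge 5$ when $l=-1$), so $U_{n,3}^*$ has admissible nonnegative parameters, and that equal spectral radii force equal parameter triples.
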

\pf The proof follows from Theorem \ref{main3}.\qe

By Theorem~\ref{thm:min-4-cycle}, $U_{n,3}^*$ minimizes the spectral radius over the family $\{G_2(a,b,c): a+b+c+6=n\}$. It remains to compare $\rho(U_{n,3}^*)$ with the spectral radii of the remaining graphs in $\mathcal{U}_{n,3}$.
\subsection{Unicyclic graphs with girth $6$ and $\gamma=3$}
In this subsection, we compare the spectral radius of $U_{n,3}^*$ with the spectral radii of graphs in $\mathcal U_{n,3}$ whose unique cycle is a $6$-cycle. Let $a,b,c$ be three nonnegative integers, and at least one of them is nonzero. Without loss of generality, assume that $a> 0$. Let $G_3 = [v_1,v_2,v_3,v_4,v_5,v_6,v_1]$ be the cycle on $6$ vertices. We define $G_3(a,b,c)$ to be the graph obtained from the cycle $G_3$ by attaching $a$, $b$, and $c$ pendant vertices to $v_1$, $v_3$, and $v_5$, respectively (see Figure \ref{fig:6-2}).
\begin{figure}[ht]
		\centering
		\begin{tikzpicture}[scale=1]
			\coordinate (A) at (0.00, 0.00);
			\coordinate (B) at (1.00, 1.00);
			\coordinate (C) at (-1.00, 1.00);
			\coordinate (D) at (-1.00, 2.00);
			\coordinate (E) at (1.00, 2.00);
			\coordinate (F) at (0.00, 3.00);
			\coordinate (G) at (-0.50, 4);
			\coordinate (H) at (0.50, 4);
			\coordinate (I) at (1.75, 0.50);
			\coordinate (J) at (1.75, 1.50);
			\coordinate (K) at (-1.75, 1.50);
			\coordinate (L) at (-1.75, 0.50);
			
			\draw (-1,1) node[anchor=north] {$v_1$};
			\draw (-1,2) node[anchor=east] {$v_2$};
			\draw (0,3) node[anchor=west] {$v_3$};
			\draw (1,1) node[anchor=north] {$v_5$};
			\draw (1,2) node[anchor=west] {$v_4$};
			\draw (0,0) node[anchor=north] {$v_6$};
			
			\draw[loosely dotted, line width=1.5pt] (-0.50, 4) -- (0.50, 4);
			\draw[loosely dotted, line width=1.5pt] (1.75, 0.50) -- (1.75, 1.50);
			\draw[loosely dotted, line width=1.5pt] (-1.75, 0.50) -- (-1.75, 1.50);
			
			\draw (D) -- (F);
			\draw (D) -- (C);
			\draw (C) -- (A);
			\draw (F) -- (E);
			\draw (E) -- (B);
			\draw (B) -- (A);
			\draw (F) -- (G);
			\draw (F) -- (H);
			\draw (K) -- (C);
			\draw (L) -- (C);
			\draw (B) -- (J);
			\draw (B) -- (I);
			\fill[black] (A) circle (2pt);
			\fill[black] (B) circle (2pt);
			\fill[black] (C) circle (2pt);
			\fill[black] (D) circle (2pt);
			\fill[black] (E) circle (2pt);
			\fill[black] (F) circle (2pt);
			\fill[black] (G) circle (2pt);
			\fill[black] (H) circle (2pt);
			\fill[black] (I) circle (2pt);
			\fill[black] (J) circle (2pt);
			\fill[black] (K) circle (2pt);
			\fill[black] (L) circle (2pt);

            \draw (1.7,1.8) node[anchor=north west, rotate=0] {$\Biggl\}$};
            \draw (2.1,1) node[anchor=west] {$c$};

            \draw (-1.7,0.2) node[anchor=north west, rotate=180] {$\Biggl\}$};
            \draw (-2.1,1) node[anchor=east] {$a$};

            \draw (-0.8,3.9) node[anchor=north west, rotate=90] {$\Biggl\}$};
            \draw (0.3,4.5) node[anchor=east] {$b$};
		\end{tikzpicture}
		\caption{The unicyclic graph $G_3(a,b,c)$.}
		\label{fig:6-2}
	\end{figure}
    
\begin{lem}\label{lem-6-cycle}
Let $n\geq 12$ be an integer and write $n=3k+l$, where $k$ is a positive integer and $l\in\{-1,0,1\}$. Suppose that $G\in\mathcal{U}_{n,3}$ and it contains a $6$-cycle. Then
$\rho(G)\geq \rho\bigl(G_3(k+l-2,k-2,k-2)\bigr)$ with equality if and only if $G\cong G_3(k+l-2,k-2,k-2)$.
\end{lem}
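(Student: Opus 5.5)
First I will pin down the structure of $G$. Let $C=[v_1,\dots,v_6,v_1]$ be the unique cycle. Since $C_6$ already has a perfect matching of size $3$ and $\gamma(G)=3$, every vertex outside $C$ must be adjacent to $C$. Otherwise a path $v_i\,x\,y$ with $y$ off the cycle gives the matching $\{xy\}$ together with a perfect matching of the path $C-v_i\cong P_5$ minus one vertex, which is impossible since $P_5$ has only $2$ disjoint edges. A cleaner version of the same argument: an edge $xy$ with $x,y\notin C$, together with $C-v_i$ where $v_i$ is the attachment vertex, contains $1+2=3$ edges, and one more edge exists. Concretely, if $x$ is adjacent to $v_i$ and $y$ is adjacent to $x$, then $\{xy, v_iv_{i+1}, v_{i+2}v_{i+3}, v_{i+4}v_{i+5}\}$ has size $4$, a contradiction. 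So all non-cycle vertices are pendant vertices attached to cycle vertices.

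Next, if two adjacent cycle vertices $v_i,v_{i+1}$ both carry pendants $x,y$, then $\{xv_i, yv_{i+1}, v_{i+2}v_{i+3}, v_{i+4}v_{i+5}\}$ is a matching of size $4$. Hence the set of cycle vertices carrying pendants is independent in $C_6$. Up to symmetry this set is contained in $\{v_1,v_3,v_5\}$, or it is $\{v_1,v_4\}$. The case $\{v_1,v_4\}$ fails: take $xv_1$, $yv_4$, $v_2v_3$, $v_5v_6$, giving a matching of size $4$. Thus $G\cong G_3(a,b,c)$ with $a+b+c=n-6$, and conversely each such graph has matching number $3$, since $\{v_2,v_4,v_6\}$ is a vertex cover.

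Now I minimize over $(a,b,c)$. I will compute
\[
\phi_\lambda(G_3(a,b,c))=\lambda^{n-6}\bigl(\lambda^6-n\lambda^4+F\lambda^2-H\bigr),
\]
where $F$ and $H$ are symmetric polynomials in $a,b,c$, obtained by Sachs' formula or by repeated expansion at the pendant vertices. The polynomial should have only even powers, since the graph is bipartite, so substituting $\mu=\lambda^2$ gives a cubic $\chi_\mu(a,b,c)$ symmetric in $(a,b,c)$. The aim is a balancing lemma: if $a\ge c+2$, then $\mu_1(a,b,c)>\mu_1(a-1,b,c+1)$. I will compute $\chi_\mu(a-1,b,c+1)-\chi_\mu(a,b,c)$. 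Because the cubic is symmetric and its coefficients are of degree at most $3$, this difference should be linear in $\mu$, of the form $(a-c-1)\bigl(\mu-\theta\bigr)$ with $\theta$ roughly $b+$const. I will then show $\theta<\mu_1(a,b,c)$, using the induced star bound $\mu_1(a,b,c)>\max(a,b,c)+2$ (the vertex $v_1$ has degree $a+2$), and apply Lemma~\ref{poly}(i). Iterating over all pairs gives $\max-\min\le 1$. For $n=3k+l$ the only balanced triple with sum $3k+l-6$ is a permutation of $(k+l-2,k-2,k-2)$, and symmetry of $G_3$ under permuting $v_1,v_3,v_5$ gives uniqueness up to isomorphism.

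I expect the main obstacle to be verifying that $\theta<\mu_1$ in all cases. The constant term in $\theta$ may involve a fraction such as $\tfrac{\text{const}}{a-c-1}$, similar to the situation in Lemmas~\ref{ab4} and \ref{ab5ac3}. If the crude star bound is not enough when $a-c=2$, I will use a stronger lower bound for $\mu_1$, namely the spectral radius of the induced subgraph formed by $v_1$ with its pendants and $v_2,v_6$ and their neighbours, or evaluate $\chi$ directly at $\theta$ and check its sign.
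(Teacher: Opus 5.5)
Your proposal is correct, but the optimisation over $(a,b,c)$ follows a different route from the paper's. The paper computes nothing at that stage. It fixes $b$ and observes that the reflection of the hexagon fixing $v_3$ and $v_6$ swaps $v_1$ and $v_5$, so $G-v_1\cong G-v_5$ when $G$ is the $6$-cycle with $b$ pendants at $v_3$. It then applies Lemma \ref{lem-pendant} directly: moving a pendant from the heavier to the lighter of $v_1,v_5$ strictly lowers $\rho$.

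You replace that external lemma by an explicit polynomial comparison, and it works more cleanly than you fear. Counting matchings ($m_1=n$, $m_2=9+4s+e_2$, $m_3=2+3s+2e_2+e_3$, where $s=a+b+c$, $e_2=ab+bc+ca$, $e_3=abc$) and subtracting $2$ for the cycle in Sachs' formula gives
\[
\phi_\lambda(G_3(a,b,c))=\lambda^{n-6}\bigl(\lambda^6-n\lambda^4+(9+4s+e_2)\lambda^2-(4+3s+2e_2+e_3)\bigr).
\]
The move $(a,c)\mapsto(a-1,c+1)$ changes $e_2$ by $a-c-1$ and $e_3$ by $b(a-c-1)$. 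So the difference of the cubics is exactly $(a-c-1)\bigl(\mu-(b+2)\bigr)$. There is no fractional correction: $\theta=b+2$, and the star at $v_3$ already gives $\mu_1(a,b,c)>b+2$. Hence Lemma \ref{poly}(i) applies immediately whenever $a\ge c+2$, and the anticipated obstacle does not arise.

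The paper's argument is shorter and free of computation. Yours is self-contained, needing only Lemma \ref{poly}(i) and the star bound, and it mirrors the technique the paper uses for $G_2(a,b,c)$. It also yields the general cubic for $G_3(a,b,c)$. Specialised at $(k+l-2,k-2,k-2)$, that cubic is exactly the polynomial $\psi_2$ the paper needs anyway in Lemma \ref{lem:6-4-comp}.

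Your structural reduction matches the paper's observations (a)--(c), and is in fact more careful. You explicitly exclude pendants on an antipodal pair such as $\{v_1,v_4\}$ via the matching $\{xv_1,yv_4,v_2v_3,v_5v_6\}$, which the paper's observation (c) passes over.

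There is one slip. The vertex cover certifying $\gamma(G_3(a,b,c))=3$ is $\{v_1,v_3,v_5\}$, not $\{v_2,v_4,v_6\}$, since the latter misses the pendant edges. Also, the first, garbled sentence of your structural paragraph can simply be dropped; the concrete four-edge matching that follows it is the correct argument.
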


\begin{proof}
	Suppose $G \in \mathcal{U}_{n,3}$ and it has a cycle of length $6$. Let $C=[v_1,v_2,v_3,v_4,v_5,v_6,v_1]$ be the unique cycle of $G$. Then we have the following observations:
	\begin{itemize}
		\item[(a)] There are only pendant vertices attached to the vertices of the cycle, otherwise $\gamma > 3$.
		\item[(b)] The pendant vertices cannot be attached to two adjacent vertices of the cycle. Otherwise, assume that $u$ and $v$ are pendant vertices such that $u \sim v_3$ and $v \sim v_4$ (see Figure~\ref{fig:6-1-1}). Then $\Gamma = \{v_1v_2, uv_3, vv_4, v_5v_6\}$ forms a matching of $G$. Thus, $\gamma>3$.
		\begin{figure}[ht]
			\centering
			\begin{tikzpicture}[scale=1]
				\coordinate (A) at (0.00, 0.00);
				\coordinate (B) at (1.00, 1.00);
				\coordinate (C) at (-1.00, 1.00);
				\coordinate (D) at (-1.00, 2.00);
				\coordinate (E) at (1.00, 2.00);
				\coordinate (F) at (0.00, 3.00);
				\coordinate (G) at (-0.50, 3.50);
				\coordinate (H) at (0.50, 3.50);
				\coordinate (I) at (1.50, 2.50);
				\coordinate (J) at (1.50, 1.50);
				
				\draw (-1,1) node[anchor= east] {$v_1$};
				\draw (-1,2) node[anchor= east] {$v_2$};
				\draw (0,3) node[anchor=west] {$v_3$};
				\draw (1,1) node[anchor= west] {$v_5$};
				\draw (1,2) node[anchor= south] {$v_4$};
				\draw (0,0) node[anchor= north] {$v_6$};
				
				\draw[loosely dotted, line width=1.5pt] (1.50, 2.50) -- (1.50, 1.50);
				\draw[loosely dotted, line width=1.5pt] (-0.50, 3.50) -- (0.50, 3.50);
				
				\draw (D) -- (F);
				\draw (D) -- (C);
				\draw (C) -- (A);
				\draw (F) -- (E);
				\draw (E) -- (B);
				\draw (B) -- (A);
				\draw (F) -- (G);
				\draw (F) -- (H);
				\draw (E) -- (I);
				\draw (E) -- (J);
				\fill[black] (A) circle (2pt);
				\fill[black] (B) circle (2pt);
				\fill[black] (C) circle (2pt);
				\fill[black] (D) circle (2pt);
				\fill[black] (E) circle (2pt);
				\fill[black] (F) circle (2pt);
				\fill[black] (G) circle (2pt);
				\fill[black] (H) circle (2pt);
				\fill[black] (I) circle (2pt);
				\fill[black] (J) circle (2pt);
			\end{tikzpicture}
			\caption{Unicyclic graph with a $6$-cycle and $\gamma > 3$.}
			\label{fig:6-1-1}
		\end{figure}
		\item[(c)] The pendant vertices have to be distributed among three alternate vertices of the cycle, i.e, distributed among $\{v_1,v_3,v_5\}$ or $\{v_2,v_4,v_6\}$.
	\end{itemize}
	Using the above observations, we can conclude that if $G \in \mathcal{U}_{n,3}$, then $G\cong G_3(a,b,c)$, where $a,b,c$ are nonnegative integers (not all are zero) with $a+b+c+6=n$.
	Finally, using Lemma~\ref{lem-pendant}, we have that the graph with minimum spectral radius among $G_3(a,b,c)$ is the one in which the pendant vertices are distributed almost equally among the vertices $ v_1$, $ v_3$, and $v_5$. Thus, if $n=3k+l$ such that $k$ is a positive integer and $l\in \{-1,0,1\}$, then $\rho(G)\geq \rho(G_3(k+l-2,k-2,k-2))$ with equality if and only if $G\cong G_3(k+l-2,k-2,k-2)$.
\end{proof}

By Lemma~\ref{lem-6-cycle}, the minimum spectral radius among all graphs in $\mathcal U_{n,3}$ whose unique cycle is a $6$-cycle has been determined. We now compare it with $\rho(U_{n,3}^*)$.
\begin{lem}\label{lem:6-4-comp}
	Let $n\geq 12$ be an integer such that $n=3k+l$, where $k$ is a positive integer and $l\in \{-1,0,1\}$. Then $\rho(G_2(k,k+l-4,k-2))\leq \rho(G_3(k+l-2,k-2,k-2))$ with equality if and only if $l=0$.
\end{lem}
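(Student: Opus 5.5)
The plan is to reduce both spectral radii to largest roots of explicit cubics in $\mu=\lbd^2$, as was done for $G_2(a,b,c)$, and compare them using Lemma~\ref{poly}. Since $G_3$ is bipartite and every pendant vertex hangs on $v_1$, $v_3$ or $v_5$, I would first compute
\[
\phi_\lbd(G_3(a,b,c))=\lbd^{n-6}\,\chi_{a,b,c}(\lbd^2),
\]
where $\chi_{a,b,c}(\mu)$ is a monic cubic. To get it, I would expand along the pendant vertices (deleting a pendant edge $uv$ gives $\phi(G)=\lbd\phi(G-u)-\phi(G-u-v)$). Equivalently, I would use the Sachs formula for a unicyclic graph: the matching polynomial of $G_3(a,b,c)$ minus $2\phi_\lbd(G_3(a,b,c)-C_6)=2\lbd^{n-6}$. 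This yields
\[
\chi_{a,b,c}(\mu)=\mu^3-n\mu^2+F(a,b,c)\mu-H(a,b,c),
\]
with $F$ and $H$ symmetric polynomials in $a,b,c$, and $\rho(G_3(a,b,c))^2$ is the largest root of $\chi_{a,b,c}$. On the other side, $\rho(G_2(k,k+l-4,k-2))^2=\mu_1(k,k+l-4,k-2)$ is the largest root of $\psi_\mu(k,k+l-4,k-2)$. This is exactly the cubic $f$ from the proof of Lemma~\ref{cycle_4_bound}, with $a$ replaced by $k$.

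For $l=0$, both graphs should have the same spectral radius $\sqrt{k+2}$. By Lemma~\ref{cycle_4_bound}(ii) we already know $\mu_1(k,k-4,k-2)=k+2$. For $G_3(k-2,k-2,k-2)$ I would use the rotational symmetry. The Perron vector takes a value $x$ on $v_1,v_3,v_5$, a value $y$ on $v_2,v_4,v_6$, and $z$ on all pendant vertices, so
\[
\rho z=x,\qquad \rho y=2x,\qquad \rho x=2y+(k-2)z .
\]
This gives $\rho^2=4+(k-2)=k+2$. Hence equality holds when $l=0$, and presumably $\chi_{k-2,k-2,k-2}(\mu)$ factors with $\mu-(k+2)$ as its top factor, which would serve as a check on the computed $F$ and $H$.

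For $l=\pm1$, I would put both cubics into the variable $t=\mu-k$ and form the difference
\[
D_l(\mu)=\chi_{k+l-2,k-2,k-2}(\mu)-\psi_\mu(k,k+l-4,k-2).
\]
Both are monic with the same $\mu^2$-coefficient $-n$, so $D_l$ is linear in $\mu$, say $D_l(\mu)=c_l(\mu-\nu_l)$. I then aim to show $\chi_{k+l-2,k-2,k-2}(\mu)<\psi_\mu(k,k+l-4,k-2)$ for $\mu\ge \mu_1(k,k+l-4,k-2)$. If $c_l>0$ and $\nu_l>\mu_1(k,k+l-4,k-2)$, then Lemma~\ref{poly}(ii) applies directly. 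Otherwise, applying Lemma~\ref{poly}(i) with the roles reversed gives $\mu_1(\chi)>\mu_1(\psi)$.

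The bounds of Lemma~\ref{cycle_4_bound} are what make this comparison checkable. For $l=-1$ we have $\mu_1<k+2$, and for $l=1$ we have $\mu_1<k+3$; one also needs lower bounds such as $\mu_1>k+1$, which come from the star subgraphs. So it suffices to check that $D_l$ has the correct sign on the relevant half-line starting below $k+2$ (respectively $k+3$). Alternatively, I can evaluate $\chi_{k+l-2,k-2,k-2}$ at the point $k+2$ or $k+3$ and at $\mu_1$ directly and show it is negative there. The main obstacle I expect is the bookkeeping in computing $F$ and $H$ correctly, including the $-2$ cycle term. A second difficulty arises if $D_l$ has the wrong sign near $\mu_1$. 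In that case I would instead evaluate $\chi_{k+l-2,k-2,k-2}$ at the explicit value $k+2$ (for $l=-1$) or at a bound like $k+2.5$ (for $l=1$), show it is negative, and thereby bound $\rho(G_3)^2$ from below strictly above the upper bound on $\mu_1$ from Lemma~\ref{cycle_4_bound}, refining that upper bound if necessary.
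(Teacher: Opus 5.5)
\textbf{Verdict: genuine gap in the $l=\pm1$ case.} Your $l=0$ case is fine, and the symmetric Perron-vector computation giving $\rho(G_3(k-2,k-2,k-2))^2=k+2$ is a nice shortcut. For $l=\pm1$ your skeleton matches the paper's: the two cubics differ by a linear polynomial, then Lemma~\ref{poly}(ii) applies.

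The gap is in the one step that carries the inequality. You never show that the crossing point $\nu_l$ lies above $\mu_1(k,k+l-4,k-2)$, and the tools you name cannot show it.
\begin{itemize}
\item Carrying out the computation gives $D_l(\mu)=(3-l)\bigl(\mu-(k+1+\tfrac{3}{3-l})\bigr)$, so $\nu_{-1}=k+\tfrac74$ and $\nu_1=k+\tfrac52$. The bounds $\mu_1<k+2$ and $\mu_1<k+3$ of Lemma~\ref{cycle_4_bound} lie above these points, so they say nothing about $\nu_l$.
\item Your fallback fails outright. With $t=\mu-k$ one finds $\chi_{k-3,k-2,k-2}=(t+1)(t^2-3)$ and $\chi_{k-1,k-2,k-2}=(t+1)(t^2-2t-1)$. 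So $\rho(G_3)^2$ equals $k+\sqrt3$ for $l=-1$ and $k+1+\sqrt2$ for $l=1$. Consequently $\chi$ is positive at the points you propose: it equals $3$ at $k+2$ when $l=-1$, and $\tfrac78$ at $k+\tfrac52$ when $l=1$.
\item The true margins are tiny: $\mu_1(\psi)\approx k+1.719$ against $k+1.732$, and $\mu_1(\psi)\approx k+2.391$ against $k+2.414$. No crude bound will separate the two roots.
\item $D_l$ has positive slope, so it cannot keep a negative sign on a half-line. For the same reason Lemma~\ref{poly}(i) with the roles reversed is never applicable. Only the sign of $D_l$ at the single point $\mu_1(\psi)$ matters.
\end{itemize}

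\textbf{What is missing} is a sharp evaluation of $\psi$. In the variable $t=\mu-k$,
\[
\psi_\mu(k,k+l-4,k-2)=t^3-lt^2+(l-6)t+4 .
\]
At $t=\nu_l-k$ its value is $\tfrac{11}{64}$ for $l=-1$ and $\tfrac78$ for $l=1$; these are the numbers in the paper's proof. Its derivative $3t^2-2lt+l-6$ is positive from that point on: its larger zero is $(\sqrt{22}-1)/3$ for $l=-1$ and $5/3$ for $l=1$. Hence $\mu_1(\psi)<\nu_l$, and Lemma~\ref{poly}(ii) gives the strict inequality.

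Equivalently, since you have the explicit roots of $\chi$, you can substitute them into $\psi$. This gives $7-4\sqrt3>0$ and $3-2\sqrt2>0$, and $\psi$ is increasing at those points.

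The paper itself states only the positivity of $\psi$ at $\nu_l$. The monotonicity remark is what actually places $\nu_l$ above the largest root, so include it.
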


\pf Observe that the squares of the largest eigenvalues of the graphs $G_2(k,k+l-4,k-2)$ and $G_3(k+l-2,k-2,k-2)$ are the largest roots of the polynomials
$$\psi_1(\lbd)=\lbd^3-(3k+l)\lbd^2+(3k^2+2lk+l-6)\lbd-(k^3+k^2l+k(l-6)-4)$$ and 
\begin{align*}
	\psi_2(\lbd)=&\lbd^3-(3k+l)\lbd^2+(3k^2+2kl-3)\lbd-(k^3+k^2l-3k-l+2),
\end{align*}
respectively. Thus, we have
\begin{align*}
	\psi_2(\lbd)-\psi_1(\lbd)=(3-l)\left(\lbd-\left(k+1+\frac{3}{3-l}\right)\right)
\end{align*}
If $l=0$, then we can deduce that the largest root of the polynomials $\psi_1(\lbd)$ and $\psi_2(\lbd)$ is the same and its value is $k+2$. Again, one can calculate that the value of $\psi_1(k+1+\frac{3}{3-l})$ is $\frac{7}{8}$ when $l=1$ and $\frac{11}{64}$ when $l=-1$. Thus, the largest root of the polynomial $\psi_1(\lbd)$ is less than $k+1+\frac{3}{3-l}$. Thus, using Lemma \ref{poly}(ii), we get the required inequality.
\qe

The following is the main result of this subsection, which is an immediate consequence of Lemmas~\ref{lem-6-cycle} and \ref{lem:6-4-comp}.
\bt\label{thm-6-cycle}
	Let $n\geq 12$ be an integer and $G \in \mathcal{U}_{n,3}$. If the length of the unique cycle of $G$ is $6$, then $\rho(G)\geq\rho(U_{n,3}^*)$, with equality if and only if $n\equiv 0\pmod 3$ and $G\cong G_3(\frac{n-6}{3},\frac{n-6}{3},\frac{n-6}{3})$.
\et
\pf The proof is immediate.\qe

\subsection{Unicyclic graphs with girth $5$ and $\gamma=3$}
In this subsection, we compare the spectral radius of $U_{n,3}^*$ with the spectral radii of graphs in $\mathcal U_{n,3}$ whose unique cycle is a $5$-cycle. The following result is useful to prove the main result of this subsection. 

\bl\label{lem:5-4-comp} 
Let $H$ be a graph obtained from the path on $6$ vertices $[v_1,v_2,v_3,v_4,v_5,v_6]$ by adding an edge between the vertices $v_1$ and $v_5$. Let $n\geq 12$ be an integer and $U_3$ be the graph on $n$ vertices obtained from the graph $H$ by attaching $n-6$ pendant vertices to $v_6$ {\rm(}see Figure {\rm\ref{fig_5cycle}(a))}. Then $\rho(U_3)>\rho(U_{n,3}^*)$. 
\begin{figure}[ht]
	\centering
	\begin{subfigure}[b]{0.3\textwidth}
		\centering
		\begin{tikzpicture}[scale=1]
			\coordinate (A) at (0.00, 0.00);
			\coordinate (B) at (1.50, 0.00);
			\coordinate (C) at (0.00, 1.50);
			\coordinate (D) at (1.50, 1.50);
			\coordinate (E) at (0.76, 2.77);
			\coordinate (F) at (2.50, 0.00);
			\coordinate (G) at (3.50, -0.50);
			\coordinate (H) at (3.50, 0.50);
			
			\draw (0.76, 2.77) node[anchor= south] {$v_2$};
			\draw (1.5,1.5) node[anchor= west] {$v_1$};
			\draw (0,1.5) node[anchor= east] {$v_3$};
			\draw (1.5, 0.00) node[anchor= north] {$v_5$};
			\draw (0,0) node[anchor= north] {$v_4$};
			\draw (2.5,0) node[anchor=north] {$v_6$};
			\draw (3.5,0.5) node[anchor=west] {$v_7$};
			\draw (3.5,-0.5) node[anchor=west] {$v_n$};

			
			\draw (C) -- (E);
			\draw (C) -- (A);
			\draw (A) -- (B);
			\draw (E) -- (D);
			\draw (D) -- (B);
			\draw (B) -- (F);
			\draw (F) -- (H);
			\draw (F) -- (G);
			\fill[black] (A) circle (2pt);
			\fill[black] (B) circle (2pt);
			\fill[black] (C) circle (2pt);
			\fill[black] (D) circle (2pt);
			\fill[black] (E) circle (2pt);
			\fill[black] (F) circle (2pt);
			\fill[black] (G) circle (2pt);
			\fill[black] (H) circle (2pt);
			
			\draw[loosely dotted, line width=1.5pt] (3.50, -0.50) -- (3.50, 0.50);
		\end{tikzpicture}
		\caption{$U_3$}
	\end{subfigure}
	\hfill
	\begin{subfigure}[b]{0.5\textwidth}
		\centering
		\begin{tikzpicture}[scale=1]
			\coordinate (A) at (0.00, 0.00);
			\coordinate (B) at (2.00, 0.00);
			\coordinate (C) at (0.00, 2.00);
			\coordinate (D) at (2.00, 2.00);
			\coordinate (G) at (3.00, 0.00);
			\coordinate (J) at (4.00, 0.00);
			\coordinate (K) at (5.00, 0.50);
			\coordinate (L) at (5.00, -0.50);
			
			\draw (0,0) node[anchor=north] {$v_3$};
			\draw (2,0) node[anchor=north] {$v_4$};
			\draw (2,2) node[anchor= south] {$v_1$};
			\draw (0,2) node[anchor=south] {$v_2$};
			\draw (3,0) node[anchor=north] {$v_5$};
			\draw (4,0) node[anchor=north] {$v_6$};
			\draw (5,0.5) node[anchor=west] {$v_7$};
			\draw (5,-0.5) node[anchor=west] {$v_n$};
			
			\draw[loosely dotted, line width=1.5pt] (5.00, 0.50) -- (5.00, -0.50);

			
			\draw (C) -- (D);
			\draw (C) -- (A);
			\draw (D) -- (B);
			\draw (A) -- (B);
			\draw (B) -- (G);
			\draw (G) -- (J);
			\draw (J) -- (K);
			\draw (J) -- (L);
			\fill[black] (A) circle (2pt);
			\fill[black] (B) circle (2pt);
			\fill[black] (C) circle (2pt);
			\fill[black] (D) circle (2pt);
			\fill[black] (G) circle (2pt);
			\fill[black] (J) circle (2pt);
			\fill[black] (K) circle (2pt);
			\fill[black] (L) circle (2pt);
		\end{tikzpicture}
		\caption{$G_2(n-6,0,0)$}
	\end{subfigure}
	\caption{Unicyclic graphs $U_3$ and $G_2(n-6,0,0)$.}
	\label{fig_5cycle}
\end{figure}
\el
\pf Let $G$ be the graph obtained from $U_3$ by removing the edge $v_1 v_5$ and then adding the edge $v_1v_4$. Then $G\cong G_2(n-6,0,0)$ (see Figure \ref{fig_5cycle}(b)). Let $\mathbf{x}$ be an eigenvector of $A(G)$ corresponding to the eigenvalue $\rho(G)$ and $x_v$ denotes the entry corresponding to the vertex $v$. Note that $\rho(G)>\sqrt{n-5}\geq\sqrt{7}$. Without loss of generality, we assume that $x_{v_1}=1$. By eigen-equations, we get 
\begin{itemize}
	\item[(I1)] $x_{v_2}+x_{v_4}=\rho(G)x_{v_1}$.
	\item[(I2)] $x_{v_1}+x_{v_3}=\rho(G)x_{v_2}$.
	\item[(I3)] $x_{v_4}+x_{v_2}=\rho(G)x_{v_3}$.
	\item[(I4)] $x_{v_1}+x_{v_3}+x_{v_5}=\rho(G)x_{v_4}$.
\end{itemize}
From (I1) and (I3), we have $x_{v_1}=x_{v_3}$. Since $x_{v_1}=x_{v_3}=1$, using (I2), we have $x_{v_2}=\dfrac{2}{\rho(G)}$. Then using the values of $x_{v_2}$ and $x_{v_3}$ in (I3), we have $x_{v_4}=\dfrac{\rho(G)^2-2}{\rho(G)}$. Finally using (I4) and the values of $x_{v_1},x_{v_3}$ and $x_{v_4}$, we have $$x_{v_5}-x_{v_4}=\frac{\rho(G)^3-\rho(G)^2-4\rho(G)+2}{\rho(G)}>0 \text{ (since $\rho(G)>\sqrt{7}$)}.$$

    Since $\mathbf{x}^T(A(U_{4})-A(G))\mathbf{x}=2x_{v_1}(x_{v_5}-x_{v_4})$, we have $\rho(U_{4})>\rho(G)$ by \eqref{eqn:eq_sp_rd}. Thus, by Theorem \ref{thm:min-4-cycle} the result follows.
\qe

We now prove the main result of this subsection.
\bt\label{thm-5-cycle}
 Let $n \ge 12$ be an integer. If $G \in \mathcal{U}_{n,3}$ contains a cycle of length $5$, then $\rho(G) > \rho(U_{n,3}^*)$.
\et

\begin{proof}
    Let us assume that the graph $G \in \mathcal{U}_{n,3}$ and it contains the $5$-cycle $C = [v_1,v_2,v_3,v_4,v_5,v_1]$. Then we have the following observations:
    \begin{itemize}
        \item[(a)] No three consecutive vertices on the cycle can have a tree structure attached to it, otherwise $\gamma > 3$. Let us consider the following example to show it: For simplicity, let us assume single pendant vertices $u,v$ and $w$ are attached to each of the three consecutive vertices of the cycle, say $v_1,v_2$ and $v_3$, respectively. Then we have a matching $\Gamma = \{uv_1, vv_2, wv_3, v_4v_5 \}$ and hence $\gamma \ge 4$. (See (a) of Figure~\ref{fig:5-1-2}).

\begin{figure}[ht]
    \centering
    \begin{subfigure}[b]{0.3\textwidth}
        \centering
        \begin{tikzpicture}[scale=0.8]
  \coordinate (A) at (0.00, 0.00);
  \coordinate (B) at (2.00, 0.00);
  \coordinate (C) at (0.00, 2.00);
  \coordinate (D) at (2.00, 2.00);
  \coordinate (E) at (1.00, 3.50);
  \coordinate (F) at (1.00, 4.50);
  \coordinate (G) at (3.00, 2.00);
  \coordinate (H) at (3.00, 0.00);

  \draw (1.00, 3.50) node[anchor= east] {$v_1$};
  \draw (1.00, 4.50) node[anchor= east] {$u$};
  \draw (2.00, 2.00) node[anchor= south] {$v_2$};
  \draw (3.00, 2.00) node[anchor= south] {$v$};
  \draw (0,2) node[anchor= east] {$v_5$};
  \draw (2.00, 0.00) node[anchor= north] {$v_3$};
  \draw (3.00, 0.00) node[anchor= north] {$w$};
  \draw (0,0) node[anchor= north] {$v_4$};

  \draw (C) -- (E);
  \draw (C) -- (A);
  \draw (A) -- (B);
  \draw (E) -- (D);
  \draw (D) -- (B);
  \draw (E) -- (F);
  \draw (D) -- (G);
  \draw (B) -- (H);
  \fill[black] (A) circle (2pt);
  \fill[black] (B) circle (2pt);
  \fill[black] (C) circle (2pt);
  \fill[black] (D) circle (2pt);
  \fill[black] (E) circle (2pt);
  \fill[black] (F) circle (2pt);
  \fill[black] (G) circle (2pt);
  \fill[black] (H) circle (2pt);
\end{tikzpicture}
        \caption{$U_4$}
    \end{subfigure}
    \hfill
    \begin{subfigure}[b]{0.3\textwidth}
        \centering
        \begin{tikzpicture}[scale=0.8]
  \coordinate (A) at (0.00, 0.00);
  \coordinate (B) at (2.00, 0.00);
  \coordinate (C) at (0.00, 2.00);
  \coordinate (D) at (2.00, 2.00);
  \coordinate (E) at (1.00, 3.50);
  \coordinate (G) at (3.00, 2.00);
  \coordinate (H) at (3.00, 0.00);
  \coordinate (I) at (4.00, 0);

  \draw (1.00, 3.50) node[anchor= east] {$v_1$};
  \draw (2.00, 2.00) node[anchor= south] {$v_2$};
  \draw (3.00, 2.00) node[anchor= south] {$w$};
  \draw (0,2) node[anchor= east] {$v_5$};
  \draw (2.00, 0.00) node[anchor= north] {$v_3$};
  \draw (3.00, 0.00) node[anchor= north] {$v$};
  \draw (4.00, 0.00) node[anchor= north] {$u$};
  \draw (0,0) node[anchor= north] {$v_4$};

  \draw (C) -- (E);
  \draw (C) -- (A);
  \draw (A) -- (B);
  \draw (E) -- (D);
  \draw (D) -- (B);
  \draw (D) -- (G);
  \draw (B) -- (H);
  \draw (H) -- (I);

  \fill[black] (A) circle (2pt);
  \fill[black] (B) circle (2pt);
  \fill[black] (C) circle (2pt);
  \fill[black] (D) circle (2pt);
  \fill[black] (E) circle (2pt);
  \fill[black] (G) circle (2pt);
  \fill[black] (H) circle (2pt);
  \fill[black] (I) circle (2pt);
\end{tikzpicture}
        \caption{$U_5$}
    \end{subfigure}
    \hfill
    \begin{subfigure}[b]{0.3\textwidth}
        \centering
        \begin{tikzpicture}[scale=0.8]
  \coordinate (A) at (0.00, 0.00);
  \coordinate (B) at (2.00, 0.00);
  \coordinate (C) at (0.00, 2.00);
  \coordinate (D) at (2.00, 2.00);
  \coordinate (E) at (1.00, 3.50);
  \coordinate (H) at (3.00, 0.00);
  \coordinate (F) at (4.00, 0.00);
  \coordinate (G) at (5.00, 0.0);

  \draw (1.00, 3.50) node[anchor= east] {$v_1$};
  \draw (2.00, 2.00) node[anchor= west] {$v_2$};
  \draw (0,2) node[anchor= east] {$v_5$};
  \draw (2.00, 0.00) node[anchor= north] {$v_3$};
  \draw (3.00, 0.00) node[anchor= north] {$w$};
  \draw (4.00, 0.00) node[anchor= north] {$v$};
  \draw (5.00, 0.00) node[anchor= north] {$u$};
  \draw (0,0) node[anchor= north] {$v_4$};

  \draw (C) -- (E);
  \draw (C) -- (A);
  \draw (A) -- (B);
  \draw (E) -- (D);
  \draw (D) -- (B);
  \draw (B) -- (H);
  \draw (H) -- (F);
  \draw (F) -- (G);
  
  \fill[black] (A) circle (2pt);
  \fill[black] (B) circle (2pt);
  \fill[black] (C) circle (2pt);
  \fill[black] (D) circle (2pt);
  \fill[black] (E) circle (2pt);
  \fill[black] (H) circle (2pt);
  \fill[black] (F) circle (2pt);
  \fill[black] (G) circle (2pt);
\end{tikzpicture}
        \caption{$U_6$}
    \end{subfigure}
    \caption{Unicyclic graphs with a $5$-cycle and $\gamma >3$.}
    \label{fig:5-1-2}
\end{figure}
        \item[(b)] There can be at most one quasi-pendant vertex attached to a vertex of the $5$-cycle and if there is a quasi-pendant vertex attached to a vertex of the $5$-cycle, then there cannot be any tree structure attached to other vertices of the cycle, otherwise $\gamma > 3$.  Let us consider the following example to show it: For simplicity, let us assume a single quasi-pendant vertex $v$ is attached to $v_3$ and a pendant vertex $u$ is attached to $v$ and another pendant vertex $w$ is attached to $v_2$. Then we have a matching $\Gamma = \{uv, wv_2, v_3v_4, v_1v_5 \}$ and hence $\gamma \ge 4$. A similar argument follows, if we have a tree structure on any other vertex of the cycle, or we have more than one quasi-pendant vertex attached to the same vertex of the cycle (See (b) of Figure~\ref{fig:5-1-2}).
        \item[(c)] If $T$ is a tree branch in $G$ attached to the vertex $v_i$ ($i\in \{1,2,3,4,5\}$) of $C$, then $d(v_i,v)\leq 2$ for every $v\in V(T)$. Otherwise $\gamma > 3$. To illustrate this, suppose that the tree branch attached to $v_3$ is the path $[u,v,w,v_3]$, where $u$ is a pendant vertex. Then we have a matching $\Gamma = \{uv, wv_3, v_1v_2, v_4v_5\}$ and hence $\gamma \ge 4$. (See (c) of Figure~\ref{fig:5-1-2}).
    \end{itemize}
    Combining the above observations, we can conclude that there are only two possible structures $U_3$ (refer to Figure \ref{fig_5cycle}(a)) and $U_7$ (refer to Figure~\ref{fig:5-1-1}). Note that $U_7$ is obtained from the cycle $C=[v_1,v_2,v_3,v_4,v_5,v_1]$ by attaching $a,b,c$ pendant vertices at $v_1,v_3,v_4$, respectively. Since at least one among $a,b,c$ is positive, we assume that $a$ is positive.
    \begin{figure}[ht]
    \centering
    \begin{tikzpicture}[scale=1]
  \coordinate (A) at (0.00, 0.00);
  \coordinate (B) at (1.50, 0.00);
  \coordinate (C) at (0.00, 1.50);
  \coordinate (D) at (1.50, 1.50);
  \coordinate (E) at (0.76, 2.77);
  \coordinate (F) at (2.50, 0.50);
  \coordinate (G) at (2.50, -0.50);
  \coordinate (H) at (-1.00, 0.50);
  \coordinate (I) at (-1.00, -0.50);
  \coordinate (J) at (0.25, 3.50);
  \coordinate (K) at (1.25, 3.50);

  \draw (0.76, 2.77) node[anchor= west] {$v_1$};
  \draw (1.5,1.5) node[anchor= west] {$v_2$};
  \draw (0,1.5) node[anchor= east] {$v_5$};
  \draw (1.5, 0.00) node[anchor= north] {$v_3$};
  \draw (0,0) node[anchor= north] {$v_4$};
  \draw (0.95,-0.5) node[anchor= north] {$U_7$};

  \draw (C) -- (E);
  \draw (C) -- (A);
  \draw (A) -- (B);
  \draw (E) -- (D);
  \draw (D) -- (B);
  \draw (J) -- (E);
  \draw (E) -- (K);
  \draw (H) -- (A);
  \draw (I) -- (A);
  \draw (B) -- (F);
  \draw (B) -- (G);
  \fill[black] (A) circle (2pt);
  \fill[black] (B) circle (2pt);
  \fill[black] (C) circle (2pt);
  \fill[black] (D) circle (2pt);
  \fill[black] (E) circle (2pt);
  \fill[black] (F) circle (2pt);
  \fill[black] (G) circle (2pt);
  \fill[black] (H) circle (2pt);
  \fill[black] (I) circle (2pt);
  \fill[black] (J) circle (2pt);
  \fill[black] (K) circle (2pt);

  \draw[loosely dotted, line width=1.5pt] (2.50, -0.50) -- (2.50, 0.50);
  \draw[loosely dotted, line width=1.5pt] (-1, -0.50) -- (-1, 0.50);
  \draw[loosely dotted, line width=1.5pt] (0.25, 3.50) -- (1.25, 3.50);

  \draw (2.5,0.8) node[anchor=north west, rotate=0] {$\Biggl\}$};
  \draw (3,0) node[anchor=west] {$b$};

  \draw (-1,-0.8) node[anchor=north west, rotate=180] {$\Biggl\}$};
  \draw (-1.5,0) node[anchor=east] {$c$};

  \draw (0,3.5) node[anchor=north west, rotate=90] {$\Biggl\}$};
  \draw (1,4.2) node[anchor=east] {$a$};
\end{tikzpicture}
\caption{Unicyclic graphs with a $5$-cycle and $\gamma = 3$}
    \label{fig:5-1-1}
\end{figure}

If $G\cong U_3$, then Lemma \ref{lem:5-4-comp} implies the required result. Thus, assume that $G\cong U_7$. Let $G^*$ be the graph obtained from $G$ by subdividing the edge $v_3 v_4$, then using Lemma~\ref{lem-sub}, we have that $\rho(G^*) < \rho(G)$. Next, $G^{**}$ be the graph obtained from $G^*$ by deleting one pendant vertex, the using Lemma~\ref{lem:subgraph} we have that $\rho(G^{**}) < \rho(G^*) < \rho(G)$. Observe that $G^{**}\cong G_3(a-1,b,c)$ (see Figure~\ref{fig:6-2}). Hence using Lemmas~\ref{lem-6-cycle} and \ref{lem:6-4-comp}, we have $\rho(U_{n,3}^*) < \rho(G^{**}) < \rho(U_7)$.
\end{proof}


\subsection{Unicyclic graphs with girth $4$ and $\gamma=3$}
In this subsection, we compare the spectral radius of $U_{n,3}^*$ with the spectral radii of graphs in $\mathcal U_{n,3}$ whose unique cycle is a $4$-cycle. Before that, let us define the following class of graphs:
\bd\label{def:cycle-4-2} Let $G_4$ be a graph obtained from a path $[v_1,v_2,v_3,v_4,v_5]$ on $5$ vertices by adding an edge between the vertices $v_2$ and $v_5$. Let $a,b,c$ be nonnegative integers such that $b>0$ and $a+b+c+6=n\geq 12$. Define $G_4(a,b,c)$ be the graph obtained from $H$ by attaching $a,b,c$ pendant vertices to $v_1,v_3,v_5$, respectively {\rm(}see Figure \ref{fig:comp4_4}{\rm)}.
\begin{figure}[ht]
		\centering
		\begin{tikzpicture}[scale=1]
			\coordinate (A) at (0.00, 0.00);
			\coordinate (B) at (2.00, 0.00);
			\coordinate (C) at (0.00, 2.00);
			\coordinate (D) at (2.00, 2.00);
			\coordinate (E) at (-0.97, 2.27);
			\coordinate (F) at (-0.26, 2.97);
			\coordinate (H) at (1.50, -1.00);
			\coordinate (I) at (2.50, -1.00);
			\coordinate (G) at (3.00, 2.00);
			\coordinate (J) at (4.00, 2.50);
			\coordinate (K) at (4.00, 1.50);
			\coordinate (L) at (2.90, -0.80);

			\draw (0,0) node[anchor=north] {$v_4$};
			\draw (2,0.4) node[anchor=north west] {$v_3$};
			\draw (2,2) node[anchor= south] {$v_2$};
			\draw (0,2) node[anchor=south west] {$v_5$};
			\draw (3,2) node[anchor= south] {$v_1$};
			
			\draw[loosely dotted, line width=1.5pt] (1.50, -1.00) -- (2.50, -1.00);
			\draw[loosely dotted, line width=1.5pt] (4.00, 1.50) -- (4.00, 2.50);
			\draw[loosely dotted, line width=1.5pt] (-0.97, 2.27) -- (-0.26, 2.97);
			
			\draw (C) -- (D);
			\draw (C) -- (A);
			\draw (D) -- (B);
			\draw (A) -- (B);
			\draw (E) -- (C);
			\draw (F) -- (C);
			\draw (B) -- (H);
			\draw (B) -- (I);
			\draw (D) -- (G);
			\draw (G) -- (J);
			\draw (G) -- (K);
			\fill[black] (A) circle (2pt);
			\fill[black] (B) circle (2pt);
			\fill[black] (C) circle (2pt);
			\fill[black] (D) circle (2pt);
			\fill[black] (E) circle (2pt);
			\fill[black] (F) circle (2pt);
			\fill[black] (H) circle (2pt);
			\fill[black] (I) circle (2pt);
			\fill[black] (G) circle (2pt);
			\fill[black] (J) circle (2pt);
			\fill[black] (K) circle (2pt);

        \draw (4,2.8) node[anchor=north west, rotate=0] {$\Biggl\}$};
        \draw (4.5,2) node[anchor=west] {$b$};

        \draw (2.75,-0.9) node[anchor=north west, rotate=-90] {$\Biggl\}$};
        \draw (2.2,-1.7) node[anchor=east] {$a$};

    \draw (-1.1,2) node[anchor=north west, rotate=135] {$\Biggl\}$};
    \draw (-0.8,2.9) node[anchor=east] {$c$};
		\end{tikzpicture}
		\caption{The unicyclic graph $G_4(a,b,c)$.}
	\label{fig:comp4_4}
\end{figure}
\ed
The following result is needed to prove the main result of this subsection.
\begin{lem}\label{lem-4-cycle-comp}
	Let $a$, $b$, and $c$ be nonnegative integers such that $n=a+b+c+6\geq 12$. Suppose that $G_2(a,b,c)$ and $G_4(a,b+1,c)$ are graphs on $n$ vertices as defined in Definitions \ref{def:cycle-4-1} and \ref{def:cycle-4-2}, respectively. Then $\rho(G_2(a,b,c))<\rho(G_4(a,b+1,c))$.
\end{lem}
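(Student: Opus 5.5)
The plan is the same polynomial comparison used in Lemmas \ref{ab2}--\ref{a00} and Lemma \ref{lem:6-4-comp}. First I would fix the labelling of $G_4(a,b+1,c)$ to match Figure \ref{fig:comp4_4}: the $4$-cycle is $[v_2,v_3,v_4,v_5,v_2]$, and $v_1$ is a pendant neighbour of $v_2$ carrying $b+1$ pendant vertices. The $a$ pendant vertices sit at $v_3$ and the $c$ pendant vertices at $v_5$, so $v_3$ and $v_5$ are the two cycle neighbours of $v_2$. This gives $5+a+(b+1)+c=n$ vertices, as required.

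Next I would compute $\phi_\lambda(G_4(a,b+1,c))$, either by expanding along the pendant stars (the recurrence $\phi(G)=\lambda\phi(G-u)-\phi(G-u-w)$ for a pendant vertex $u$ with neighbour $w$) or by using the equitable quotient matrix. Since the graph is bipartite with $n-6$ eigenvalues $0$ coming from the pendant stars, I expect a factorization $\lambda^{n-6}\,\chi(\lambda^2)$ with a monic cubic
$$\chi(\lambda)=\lambda^3-n\lambda^2+\tilde f(a,b,c)\lambda-\tilde g(a,b,c).$$
The coefficient of $\lambda^2$ is minus the number of edges, namely $-n$. The coefficient of $\lambda$ is the number of $2$-matchings, and the constant term is minus the number of $3$-matchings (the $4$-cycle contributes nothing at these degrees). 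So $\tilde f$ and $\tilde g$ can be read off by counting matchings, exactly as $f$ and $g$ were for $G_2$. Then $\rho(G_4(a,b+1,c))^2$ is the largest root of $\chi$, and it remains to show that this root exceeds $\mu_1(a,b,c)$.

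The key step is to compute $D(\lambda)=\psi_\lambda(a,b,c)-\chi(\lambda)$. Both polynomials are monic cubics with the same $\lambda^2$ coefficient, so $D$ is at most linear, say $D(\lambda)=\alpha\lambda-\beta$ with $\alpha=\tilde f-f$ and $\beta=\tilde g-g$. If $\alpha>0$, then $D(\lambda)=\alpha(\lambda-\beta/\alpha)$, and Lemma \ref{poly}(ii), applied with $f=\psi(a,b,c)$ and $g=\chi$, gives the claim as soon as $\beta/\alpha>\mu_1(G_4)$. If instead $\alpha<0$, I would show $\beta/\alpha<\mu_1(a,b,c)$. Then $\chi>\psi$ for $\lambda\ge\mu_1(a,b,c)$, and Lemma \ref{poly}(i) gives $\rho(G_4)^2>\mu_1(a,b,c)$. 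To place $\beta/\alpha$ I would use the star lower bounds already available: $\mu_1(a,b,c)>a+1$, $\mu_1(a,b,c)>b+3$ and $\mu_1(a,b,c)>c+2$, together with their analogues for $G_4$ (for instance $\chi$ has largest root larger than $b+2$, $a+3$ and $c+3$ from induced stars). When $D$ turns out to be a nonnegative constant times a degree-one term, I would also try a direct sign argument.

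The main obstacle I expect is this last localization. The ratio $\beta/\alpha$ is a rational function of $a,b,c$, and no single star bound may dominate it uniformly over all $a+b+c=n-6$. If that happens, I would split into cases according to which of $a$, $b$, $c$ is largest, and use the corresponding star bound in each case.

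As a fallback, I would avoid the polynomial comparison and argue with Perron vectors via Lemma \ref{lem:rho}. The idea is to realize $G_4(a,b+1,c)$ from a graph containing $G_2(a,b,c)$ as a proper subgraph, or from $G_2(a,b,c)$ by relocating edges toward a vertex of larger Perron entry, and then finish with Lemma \ref{lem:subgraph}.
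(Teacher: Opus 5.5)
Your strategy matches the paper's: compare $\psi=\psi_\lambda(a,b,c)$ with the monic cubic $\chi$ whose largest root is $\rho(G_4(a,b+1,c))^2$, using that $\psi-\chi$ is linear. As written, though, the plan has two genuine defects.

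First, the labelling. You took it from Figure~\ref{fig:comp4_4}, putting the $b+1$ pendant vertices at the off-cycle vertex $v_1$ and the $a$ pendant vertices at $v_3$. The definition text, and the polynomial in the paper's proof, put $a$ at $v_1$, $b+1$ at $v_3$ and $c$ at $v_5$. Under your placement the claimed inequality is false. Take $(a,b,c)=(0,6,0)$, $n=12$. Your $G_4(0,7,0)$ is $C_4$ with a pendant edge $v_2v_1$ and seven leaves at $v_1$, and its Perron equations give $\rho^2=6+\sqrt{6}<9$. But $G_2(0,6,0)$ contains $S_{10}$ centred at $v_3$, so $\mu_1(0,6,0)>9$. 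In general your labelling gives $\psi-\chi=(a-b)(\lambda-c+1)+(c-b)$, which is negative for all $\lambda>0$ whenever $a=c=0<b$. Separately, the $4$-cycle does contribute to the coefficients. By Sachs' theorem the cubic is $\lambda^3-n\lambda^2+(m_2-2)\lambda-(m_3-2e')$, where $e'$ is the number of edges vertex-disjoint from the cycle. For example, $G_2(0,0,0)$ has $m_2=8$ but $f(0,0,0)=6$. Reading $\tilde f,\tilde g$ off as pure matching counts would therefore give the wrong $\psi-\chi$.

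Second, your comparison step reads Lemma~\ref{poly} backwards. Part (i) says the polynomial that is larger on $[\mu_1(g),\infty)$ has the \emph{smaller} largest root.
\begin{itemize}
\item In your $\alpha<0$ branch, $\chi>\psi$ on $[\mu_1(a,b,c),\infty)$ gives $\mu_1(\chi)<\mu_1(a,b,c)$, i.e.\ $\rho(G_4)<\rho(G_2)$.
\item In your $\alpha>0$ branch, part (ii) with $f=\psi$, $g=\chi$ and $\beta/\alpha>\mu_1(\chi)$ gives $\mu_1(a,b,c)>\mu_1(\chi)$.
\end{itemize}
Both branches target the reverse inequality. Your $\alpha=\tilde f-f$, $\beta=\tilde g-g$ also have the wrong sign for $\psi-\chi=\alpha\lambda-\beta$. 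The two slips happen to cancel in your $\alpha<0$ branch, but each inference as stated is invalid.

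What you need is $\psi>\chi$ on $[\mu_1(\chi),\infty)$, followed by (i) with $f=\psi$, $g=\chi$. With the correct labelling this is the paper's one-line proof: $\psi-\chi=\lambda-(b-1)$ and $\mu_1(\chi)>b+3$ (star at $v_3$). No localization or case split is needed.

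Your Perron-vector fallback also works once made concrete. With the correct labelling, $G_4(a,b+1,c)\cong G_2(a,b,c)-v_5v_6+v_2v_5\cong G_2(a,b,c)-v_1v_2+v_1v_6$. So whichever of $x_{v_2}$, $x_{v_6}$ is larger, Lemma~\ref{lem:rho} gives the strict inequality directly. The proposal does not identify this move, however.
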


\begin{proof}
	One can calculate that $\rho(G_4(a,b+1,c))^2$ is the largest root of the monic cubic polynomial $$g(\lbd)=\lbd^3-(a+b+c+6)\lbd^2+(ab+bc+ac+5a+3b+4c+5)\lbd-(abc+2ab+3ac+bc+2a+b+2c+1).$$
	Thus, $\psi_\lbd(a,b,c)-g(\lbd)=x-(b-1)$. Since $\rho(G_4(a,b+1,c))^2>b+3$, Lemma \ref{poly}(i) implies that $\rho(G_2(a,b,c))^2<\rho(G_4(a,b+1,c))^2$. This completes the proof.
\end{proof}

We now prove the main result of this subsection that shows that among all graphs in $\mathcal{U}_{n,3}$ with cycle length $4$, the spectral radius of the graph $U_{n,3}^*$ is the minimum.
\bt\label{main_1}
	Let $n \ge 12$ be an integer. If $G \in \mathcal{U}_{n,3}$ has a cycle of length $4$, then $\rho(G) \ge \rho(U_{n,3}^*)$ with equality if and only if $G\cong U_{n,3}^*$.
\et

\begin{proof}
	Let $G \in \mathcal{U}_{n,3}$ be a graph which contains a $4$-cycle $C=[v_1,v_2,v_3,v_4,v_1]$ (say). Let $\mathcal{M}$ be the collection of all maximal matchings of $G$. Next, based on the maximal matching of $G$, we divide into the following cases:
	\begin{itemize}
		\item[Case 1.] For all $\Gamma \in \mathcal{M}$, every edge in $\Gamma$ has at least one endpoint on the $4$-cycle. In this case, we have only pendant vertices attached to the vertices of the cycle. Since $\gamma = 3$, we must have pendant vertices attached to at least $2$ vertices of the cycle, otherwise $\gamma = 2$. Also, observe that if pendant vertices are attached to exactly two vertices of the cycle, then the vertices on the cycle must be adjacent. Finally, there can be pendant vertices in at most $3$ vertices of the $4$-cycle, otherwise $\gamma > 3$. Hence $G\cong U_8$ (see Figure~\ref{fig:4-1}). Note that $U_8$ is obtained from the cycle $C=[v_1,v_2,v_3,v_4,v_1]$ by attaching $a$, $b$, and $c$ pendant vertices to $v_1$, $v_2$, and $v_3$, respectively, where $b>0$ and at least one among $a$ and $c$ is nonzero. Without loss of generality assume that $c>0$.
		\begin{figure}[ht]
			\centering
				\begin{tikzpicture}[scale=1]
					\coordinate (A) at (0.00, 0.00);
					\coordinate (B) at (2.00, 0.00);
					\coordinate (C) at (0.00, 2.00);
					\coordinate (D) at (2.00, 2.00);
					\coordinate (E) at (2.00, 3.00);
					\coordinate (F) at (3.00, 2.00);
					\coordinate (G) at (3.00, 0.00);
					\coordinate (H) at (2.00, -1.00);
					\coordinate (I) at (-1.00, 0.00);
					\coordinate (J) at (0.00, -1.00);
					
					\draw (0,0) node[anchor=north west] {$v_1$};
					\draw (2,0) node[anchor=north east] {$v_2$};
					\draw (2,2) node[anchor= south east] {$v_3$};
					\draw (0,2) node[anchor=south west] {$v_4$};
					
					\draw[loosely dotted, line width=1.5pt] (3.00, 0.00) -- (2.00, -1.00);
					\draw[loosely dotted, line width=1.5pt] (-1.00, 0.00) -- (0.00, -1.00);
					\draw[loosely dotted, line width=1.5pt] (2.00, 3.00) -- (3.00, 2.00);\draw (0.5,-1) node[anchor=north west] {$U_8$};
					
					\draw (J) -- (A);
					\draw (C) -- (D);
					\draw (C) -- (A);
					\draw (D) -- (B);
					\draw (A) -- (B);
					\draw (I) -- (A);
					\draw (D) -- (E);
					\draw (D) -- (F);
					\draw (B) -- (G);
					\draw (B) -- (H);
					\fill[black] (A) circle (2pt);
					\fill[black] (B) circle (2pt);
					\fill[black] (C) circle (2pt);
					\fill[black] (D) circle (2pt);
					\fill[black] (E) circle (2pt);
					\fill[black] (F) circle (2pt);
					\fill[black] (G) circle (2pt);
					\fill[black] (H) circle (2pt);
					\fill[black] (I) circle (2pt);
					\fill[black] (J) circle (2pt);
				\end{tikzpicture}
			\caption{A unicyclic graph with $4$-cycle and $\gamma = 3$.}
			\label{fig:4-1}
		\end{figure}
		Let $G^*$ be the graph obtained from $G$ by subdividing the edge $v_2v_3$. Then by Lemma~\ref{lem-sub}, we have $\rho(G^*) < \rho (G)$. If $G^{**}$ is a subgraph of $G^*$ obtained by deleting a pendent vertex, then  Lemma~\ref{lem:subgraph} implies $\rho(G^{**}) < \rho(G^*) < \rho (G)$. Note that the graph $G^{**}\in \mathcal{U}_{n,3}$ and it contains a $5$ cycle. Thus, by Theorem~\ref{thm-5-cycle}, we have that $ \rho(U_{n,3}^*)<\rho(G^{**})$. Thus, $\rho (U_8) > \rho(U_{n,3}^*)$.
		\item[Case 2.] For all $\Gamma \in \mathcal{M}$, there exists at least one edge in $\Gamma$ such that both of its endpoints are not part of the $4$-cycle. Let the edge be $uv$. Then $G-u-v$ consists of a graph containing a $4$-cycle, possibly together with some isolated vertices ($K_1$'s), and has $\gamma=2$. Thus, it follows from Theorem~\ref{thm-gamma-2} that the nontrivial component of $G - u - v$ is isomorphic to $G_1(b,c)$ (see Figure~\ref{fig:gamma-2-4}) for some nonnegative integers $b$ and $c$. Without loss of generality, let us assume that the vertex $u$ is closer to the vertices of the cycle than $v$. Then $\min\{ d(u,v_1),d(u,v_2),d(u,v_3),d(u,v_4)\} \le 2$, as otherwise $\gamma \ge 3$. Hence, the only possible structural configurations of $G$ are $U_9$, $U_{10}$, and $U_{11}$ (see Figure~\ref{fig:4-2}).
		\begin{figure}[ht]
			\centering
			\begin{subfigure}[b]{0.3\textwidth}
				\centering
				\begin{tikzpicture}[scale=0.8]
					\coordinate (A) at (0.00, 0.00);
					\coordinate (B) at (2.00, 0.00);
					\coordinate (C) at (0.00, 2.00);
					\coordinate (D) at (2.00, 2.00);
					\coordinate (G) at (3.00, 0.00);
					\coordinate (E) at (-1.00, 2.00);
					\coordinate (F) at (0.00, 3.00);
					\coordinate (H) at (1.50, -1.00);
					\coordinate (I) at (2.50, -1.00);
					\coordinate (J) at (4.00, -0.50);
					\coordinate (K) at (4.00, 0.50);
					
					\draw (0,0) node[anchor=north] {$v_1$};
					\draw (2,0) node[anchor=north west] {$v_2$};
					\draw (2,2) node[anchor= south] {$v_3$};
					\draw (0,2) node[anchor=south west] {$v_4$};
					\draw (3,0) node[anchor=north] {$u$};
					
					\draw[loosely dotted, line width=1.5pt] (1.50, -1.00) -- (2.50, -1.00);
					\draw[loosely dotted, line width=1.5pt] (4.00, 0.50) -- (4.00, -0.50);
					\draw[loosely dotted, line width=1.5pt] (-1.00, 2.00) -- (0.00, 3.00);
					
					\draw (C) -- (D);
					\draw (C) -- (A);
					\draw (D) -- (B);
					\draw (A) -- (B);
					\draw (B) -- (G);
					\draw (E) -- (C);
					\draw (F) -- (C);
					\draw (B) -- (H);
					\draw (B) -- (I);
					\draw (G) -- (K);
					\draw (G) -- (J);
					\fill[black] (A) circle (2pt);
					\fill[black] (B) circle (2pt);
					\fill[black] (C) circle (2pt);
					\fill[black] (D) circle (2pt);
					\fill[black] (G) circle (2pt);
					\fill[black] (E) circle (2pt);
					\fill[black] (F) circle (2pt);
					\fill[black] (H) circle (2pt);
					\fill[black] (I) circle (2pt);
					\fill[black] (J) circle (2pt);
					\fill[black] (K) circle (2pt);
				\end{tikzpicture}
				\caption{$U_9$}
			\end{subfigure}
			\begin{subfigure}[b]{0.3\textwidth}
				\centering
				\begin{tikzpicture}[scale=0.8]
					\coordinate (A) at (0.00, 0.00);
					\coordinate (B) at (2.00, 0.00);
					\coordinate (C) at (0.00, 2.00);
					\coordinate (D) at (2.00, 2.00);
					\coordinate (E) at (-1.00, 2.00);
					\coordinate (F) at (0.00, 3.00);
					\coordinate (H) at (1.50, -1.00);
					\coordinate (I) at (2.50, -1.00);
					\coordinate (G) at (3.00, 2.00);
					\coordinate (J) at (4.00, 2.50);
					\coordinate (K) at (4.00, 1.50);
					
					\draw (0,0) node[anchor=north] {$v_1$};
					\draw (2,0) node[anchor=north west] {$v_2$};
					\draw (2,2) node[anchor= south] {$v_3$};
					\draw (0,2) node[anchor=south west] {$v_4$};
					\draw (3,2) node[anchor= south] {$u$};
					
					\draw[loosely dotted, line width=1.5pt] (1.50, -1.00) -- (2.50, -1.00);
					\draw[loosely dotted, line width=1.5pt] (4.00, 1.50) -- (4.00, 2.50);
					\draw[loosely dotted, line width=1.5pt] (-1.00, 2.00) -- (0.00, 3.00);
					
					\draw (C) -- (D);
					\draw (C) -- (A);
					\draw (D) -- (B);
					\draw (A) -- (B);
					\draw (E) -- (C);
					\draw (F) -- (C);
					\draw (B) -- (H);
					\draw (B) -- (I);
					\draw (D) -- (G);
					\draw (G) -- (J);
					\draw (G) -- (K);
					\fill[black] (A) circle (2pt);
					\fill[black] (B) circle (2pt);
					\fill[black] (C) circle (2pt);
					\fill[black] (D) circle (2pt);
					\fill[black] (E) circle (2pt);
					\fill[black] (F) circle (2pt);
					\fill[black] (H) circle (2pt);
					\fill[black] (I) circle (2pt);
					\fill[black] (G) circle (2pt);
					\fill[black] (J) circle (2pt);
					\fill[black] (K) circle (2pt);
				\end{tikzpicture}
				\caption{$U_{10}$}
			\end{subfigure}
			\begin{subfigure}[b]{0.3\textwidth}
				\centering
				\begin{tikzpicture}[scale=0.8]
					\coordinate (A) at (0.00, 0.00);
					\coordinate (B) at (2.00, 0.00);
					\coordinate (C) at (0.00, 2.00);
					\coordinate (D) at (2.00, 2.00);
					\coordinate (G) at (3.00, 0.00);
					\coordinate (E) at (-1.00, 2.00);
					\coordinate (F) at (0.00, 3.00);
					\coordinate (H) at (1.50, -1.00);
					\coordinate (I) at (2.50, -1.00);
					\coordinate (J) at (4.00, 0.00);
					\coordinate (K) at (5.00, 0.50);
					\coordinate (L) at (5.00, -0.50);
					
					\draw (0,0) node[anchor=north] {$v_1$};
					\draw (2,0) node[anchor=north west] {$v_2$};
					\draw (2,2) node[anchor= south] {$v_3$};
					\draw (0,2) node[anchor=south west] {$v_4$};\draw (3.7,0) node[anchor=north west] {$u$};
					
					\draw[loosely dotted, line width=1.5pt] (1.50, -1.00) -- (2.50, -1.00);
					\draw[loosely dotted, line width=1.5pt] (5.00, 0.50) -- (5.00, -0.50);
					\draw[loosely dotted, line width=1.5pt] (-1.00, 2.00) -- (0.00, 3.00);
					
					\draw (C) -- (D);
					\draw (C) -- (A);
					\draw (D) -- (B);
					\draw (A) -- (B);
					\draw (B) -- (G);
					\draw (E) -- (C);
					\draw (F) -- (C);
					\draw (B) -- (H);
					\draw (B) -- (I);
					\draw (G) -- (J);
					\draw (J) -- (K);
					\draw (J) -- (L);
					\fill[black] (A) circle (2pt);
					\fill[black] (B) circle (2pt);
					\fill[black] (C) circle (2pt);
					\fill[black] (D) circle (2pt);
					\fill[black] (G) circle (2pt);
					\fill[black] (E) circle (2pt);
					\fill[black] (F) circle (2pt);
					\fill[black] (H) circle (2pt);
					\fill[black] (I) circle (2pt);
					\fill[black] (J) circle (2pt);
					\fill[black] (K) circle (2pt);
					\fill[black] (L) circle (2pt);
				\end{tikzpicture}
				\caption{$U_{11}$}
			\end{subfigure}
			\caption{Unicyclic graphs with $4$-cycle and $\gamma = 3$.}
			\label{fig:4-2}
		\end{figure}

Suppose $G \cong U_9$ (see (a) of Figure~\ref{fig:4-2}). Let $G^*$ be the graph obtained from $G$ by subdividing the edge $uv_2$. Then by Lemma~\ref{lem-sub}, we have $\rho(G^*) < \rho (G)$. Further, if $G^{**}$ is a subgraph of $G^*$ obtained by deleting a pendent edge, then using Lemma~\ref{lem:subgraph}, we have $\rho(G^{**}) < \rho(G^*) < \rho (G)$. Note that the graph $G^{**}$ is isomorphic to the graph $G_2(a,b,c)$ for some nonnegative integers $a,b,c$ (see Definition \ref{def:cycle-4-1}). Thus, by Theorem \ref{thm:min-4-cycle}, we have $\rho(U_9) > \rho(U_{n,3}^*)$.

Suppose $G \cong U_{10}$ (see (b) of Figure~\ref{fig:4-2}). Since $U_{10}\cong G_4(a,b,c)$ for some nonnegative integers $a,b,c$ (see Definition \ref{def:cycle-4-2}), we have $\rho(U_{10}) > \rho(U_{n,3}^*)$ (by Lemma \ref{lem-4-cycle-comp}).

Suppose $G \cong U_{11}$ (see (c) of Figure~\ref{fig:4-2}). Then $G\cong G_2(a,b,c)$ for some nonnegative integers $a,b,c$ (see Definition \ref{def:cycle-4-1}). By Theorem \ref{thm:min-4-cycle}, we have $\rho(U_{11}) \geq \rho(U_{n,3}^*)$ with equality if and only if $G\cong U_{n,3}^*$

\end{itemize}
	
This completes the proof.
\end{proof}


\subsection{Unicyclic graphs with girth $3$ and $\gamma=3$}
In this subsection, we compare the spectral radius of $U_{n,3}^*$ with the spectral radii of graphs in $\mathcal U_{n,3}$ whose unique cycle is a $4$-cycle. Before that, let us define the following two classes of graphs:
\bd\label{def:cycle-4-2-1} Let $G_5$ be a graph obtained from a path $[v_1,v_2,v_3,v_4,v_5,v_6]$ on $6$ vertices by adding an edge between the vertices $v_1$ and $v_3$. Let $n$ and $r$ be two integers such that $n\geq 12$ and $1\leq r\leq n-7$. Define $G_5(r,n-6-r)$ to be the graph on $n$ vertices obtained from $G_5$ by attaching $r$ pendant vertices to $v_4$ and $n-r-6$ pendant vertices to $v_6$ {\rm(}see Figure \ref{fig:3-2-3-1}{\rm)}.
\begin{figure}[ht]
    \centering
        \begin{tikzpicture}[scale=1]
  \coordinate (A) at (0.00, 0.00);
  \coordinate (B) at (2.00, 0.00);
  \coordinate (C) at (1.00, 2.00);
  \coordinate (F) at (3.00, 0.00);
  \coordinate (D) at (4.00, 0.00);
  \coordinate (G) at (2.50, -1.00);
  \coordinate (H) at (3.50, -1.00);
  \coordinate (E) at (5.00, 0.00);
  \coordinate (I) at (4.50, -1.00);
  \coordinate (J) at (5.50, -1.00);

    \draw (0,0) node[anchor=north] {$v_2$};
    \draw (2,0) node[anchor=north] {$v_3$};
    \draw (1,2) node[anchor=west] {$v_1$};
    \draw (3,0) node[anchor=south] {$v_4$};
    \draw (4,0) node[anchor=south] {$v_5$};
    \draw (5,0) node[anchor=south] {$v_6$};

  \draw (A) -- (B);
  \draw (A) -- (C);
  \draw (C) -- (B);
  \draw (B) -- (F);
  \draw (F) -- (D);
  \draw (F) -- (G);
  \draw (F) -- (H);
  \draw (D) -- (E);
  \draw (E) -- (I);
  \draw (E) -- (J);
  \fill[black] (A) circle (2pt);
  \fill[black] (B) circle (2pt);
  \fill[black] (C) circle (2pt);
  \fill[black] (F) circle (2pt);
  \fill[black] (D) circle (2pt);
  \fill[black] (G) circle (2pt);
  \fill[black] (H) circle (2pt);
  \fill[black] (E) circle (2pt);
  \fill[black] (I) circle (2pt);
  \fill[black] (J) circle (2pt);

  \draw[loosely dotted, line width=1.5pt] (3.50, -1.00) -- (2.50, -1.00);
  \draw[loosely dotted, line width=1.5pt] (4.50, -1.00) -- (5.50, -1.00);

  \draw (3.8,-1) node[anchor=north west, rotate=-90] {$\Biggl\}$};
        \draw (3.3,-1.7) node[anchor=east] {$r$};

    \draw (5.8,-1) node[anchor=north west, rotate=-90] {$\Biggl\}$};
        \draw (6,-1.7) node[anchor=east] {$n-6-r$};
\end{tikzpicture}
    \caption{The unicyclic graph $G_5(r,n-6-r)$.}
    \label{fig:3-2-3-1}
\end{figure}
\ed

\bd\label{def:cycle-4-2-2} Let $G_6$ be a graph obtained from a path $[v_1,v_2,v_3,v_4,v_5]$ on $5$ vertices by adding an edge between the vertices $v_2$ and $v_4$. Let $n$ and $r$ be two integers such that $n\geq 12$ and $r\leq n-7$. Define $G_6(r,n-5-r)$ to be the graph on $n$ vertices obtained from $G_6$ by attaching $r$ pendant vertices to $v_1$ and $n-r-5$ pendant vertices to $v_5$ {\rm(}see Figure \ref{fig:3-2-3-2}{\rm)}.
\begin{figure}[ht]
    \centering
        \begin{tikzpicture}[scale=1]
  \coordinate (A) at (0.00, 0.00);
  \coordinate (B) at (2.00, 0.00);
  \coordinate (C) at (1.00, 2.00);
  \coordinate (F) at (3.00, 0.00);
  \coordinate (G) at (4.00, 0.50);
  \coordinate (H) at (4.00, -0.50);
  \coordinate (D) at (-1.00, 0.00);
  \coordinate (I) at (-2.00, 0.50);
  \coordinate (J) at (-2.00, -0.50);

    \draw (0,0) node[anchor=north] {$v_2$};
    \draw (2,0) node[anchor=north] {$v_4$};
    \draw (1,2) node[anchor=west] {$v_3$};
    \draw (3,0) node[anchor=north] {$v_5$};
    \draw (-1,0) node[anchor=north] {$v_1$};

  \draw (A) -- (B);
  \draw (A) -- (C);
  \draw (C) -- (B);
  \draw (B) -- (F);
  \draw (B) -- (D);
  \draw (F) -- (G);
  \draw (F) -- (H);
  \draw (I) -- (D);
  \draw (J) -- (D);
  \fill[black] (A) circle (2pt);
  \fill[black] (B) circle (2pt);
  \fill[black] (C) circle (2pt);
  \fill[black] (F) circle (2pt);
  \fill[black] (G) circle (2pt);
  \fill[black] (H) circle (2pt);
  \fill[black] (D) circle (2pt);
  \fill[black] (I) circle (2pt);
  \fill[black] (J) circle (2pt);

  \draw[loosely dotted, line width=1.5pt] (-2.00, 0.50) -- (-2.00, -0.50);
  \draw[loosely dotted, line width=1.5pt] (4.00, 0.50) -- (4.00, -0.50);

  \draw (4,0.8) node[anchor=north west, rotate=0] {$\Biggl\}$};
  \draw (4.5,0) node[anchor=west] {$n-5-r$};

  \draw (-2.0,-0.8) node[anchor=north west, rotate=180] {$\Biggl\}$};
  \draw (-2.5,0) node[anchor=east] {$r$};
\end{tikzpicture}
    \caption{The unicyclic graph $G_6(r,n-5-r)$.}
    \label{fig:3-2-3-2}
\end{figure}
\ed

We now proceed to find the value of $r$ such that $\rho(G_5(r,n-r-6))$ is minimum.

\begin{lem}\label{lem:U-19-min}
	Let $r$ and $n$ be a positive integer such that $n\geq 12$ and $r\leq n-7$. Suppose that $G_5(r,n-r-6)$ is the graph as defined in Definition \ref{def:cycle-4-2-1}. Then $\rho(G_5(r,n-r-6)\geq\rho(G_5(\lfloor\frac{n-7}{2}\rfloor,\lceil\frac{n-5}{2}\rceil))$ with equality if and only if $r=\lfloor\frac{n-7}{2}\rfloor$.
\end{lem}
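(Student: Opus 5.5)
The plan is to reduce the comparison to the characteristic polynomials, as in Lemma \ref{main1}. Write $s=n-6-r$, so that $G_5(r,s)$ is the triangle $[v_1,v_2,v_3,v_1]$ with the path $v_3v_4v_5v_6$ attached, $r$ pendant vertices at $v_4$ and $s$ pendant vertices at $v_6$. The minimizer in the statement is:
\begin{itemize}
\item $r=\frac{n-7}{2}$, $s=r+1$ when $n$ is odd;
\item $r=\frac{n-8}{2}$, $s=r+2$ when $n$ is even.
\end{itemize}
Because $v_4$ already carries the triangle side, the optimum slightly favours $v_6$. So Lemma \ref{lem-pendant} cannot be used directly, since $G_5-v_4\not\cong G_5-v_6$, and an explicit computation is needed.

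\textbf{Step 1.} Compute $\phi_\lbd(G_5(r,s))$ by repeatedly applying the pendant-vertex recursion $\phi(G)=\lbd\,\phi(G-w)-\phi(G-w-w')$ at the pendant vertices, and at the cut vertices $v_4,v_6$. This gives
\[
\phi_\lbd(G_5(r,s))=\lbd^{r+s-2}\,Q_{r,s}(\lbd),
\]
with $Q_{r,s}$ of bounded degree (about $8$ in $\lbd$) whose coefficients are polynomials in $r,s$. Since the graph has the triangle, $Q_{r,s}$ is not a polynomial in $\lbd^2$; its odd coefficients come from the closed $3$-walks in the triangle. The factor $(\lbd+1)$ contributed by $v_1,v_2$ (they are twins on the triangle) should split off, as it did for $U_2$ in Theorem \ref{thm-gamma-2}; this lowers the degree further. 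Then $\rho(G_5(r,s))$ is the largest root of the remaining monic factor. I would also record the star bounds $\rho^2>r+2$ and $\rho^2>s+1$, from the induced stars at $v_4$ and $v_6$.

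\textbf{Step 2.} Form the difference $D_r(\lbd)=Q_{r+1,s-1}(\lbd)-Q_{r,s}(\lbd)$, which corresponds to moving one pendant vertex from $v_6$ to $v_4$. Because the coefficients are at most bilinear in $(r,s)$, $D_r$ should have the shape $\lbd^{j}\bigl(\alpha(s-r)\lbd^2+\beta\lbd+\gamma(s-r)\bigr)$ with $\alpha,\beta,\gamma$ explicit and of bounded size. Its sign on $[\rho,\infty)$ is then governed by the sign of $s-r-c_0$ for a small constant $c_0$, namely $1$ or $2$ according to parity.
\begin{itemize}
\item When $s-r$ is large, $D_r>0$ for all $\lbd\ge\mu_1(Q_{r,s})$, using the star lower bound to place $\lbd$ beyond the positive root of the quadratic. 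Lemma \ref{poly}(i) then shows that moving a pendant to $v_4$ strictly decreases $\rho$.
\item When $s-r$ is small or negative, the symmetric argument applied to $-D_r$ shows that moving a pendant back to $v_6$ decreases $\rho$.
\end{itemize}
Chaining these monotonicity statements from both sides brings every $r$ to $\lfloor\frac{n-7}{2}\rfloor$, with strict inequality elsewhere.

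\textbf{Main obstacle.} I expect the difficulty to lie in the boundary comparisons adjacent to the optimum, namely $r=\lfloor\frac{n-7}{2}\rfloor\pm1$. There the root of $D_r$ lies close to $\rho$, and the crude star bound does not decide the sign. For these I would follow the technique of Lemma \ref{main1}, case $k=a$ (and Lemma \ref{ab4ac3}):
\begin{enumerate}
\item locate the root of $D_r$ explicitly;
\item evaluate $Q_{r,s}$ at that point, and at a nearby rational point such as $\lbd^2=s+1+\tfrac12$, to show it is positive;
\item check that $Q_{r,s}$ is increasing beyond that point, using derivative bounds;
\item conclude via Lemma \ref{poly}(ii).
\end{enumerate}
The parity of $n$ must be handled separately, as in Lemma \ref{main1}. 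The hypothesis $n\ge12$ is used to make the numerical inequalities valid.
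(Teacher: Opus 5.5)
Your plan works. It differs from the paper in how the comparisons are organized, and your proposed handling of the one delicate step has the sign backwards.

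\textbf{How the paper does it.} The paper does not chain adjacent moves. It compares each $G_5(r,n-6-r)$ directly with the claimed minimizer. For $n=2k-1$, the difference of the degree-$7$ factors of the characteristic polynomials is
\[
h(\lambda)=t(\lambda+1)\bigl(t(\lambda-2)(\lambda+1)+2\bigr),\qquad t=r-(k-4).
\]
The largest root of $h$, namely $\frac{1+\sqrt{9-8/t}}{2}$, lies in $[1,2)$ for $t\ge 1$ and is at most $\frac{1+\sqrt{17}}{2}$ for $t\le -1$. So a single application of Lemma~\ref{poly}(i), a star lower bound on $\rho$, and a direct computation for small $n$ handle every $r$ at once.

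\textbf{How your version relates.} Your adjacent differences are exactly the telescoping pieces of $h$. With $m=s-r-2$ one gets
\[
D_r(\lambda)=(\lambda+1)\bigl(m(\lambda^2-\lambda-2)-2\bigr).
\]
So the factor $\lambda+1$ does split off as you predicted, and moving a pendant to $v_4$ lowers $\rho$ exactly when $s-r\ge 3$. The trade-off is that the direct comparison needs no chaining and treats all $r$ with one factorization. The stepwise version needs a case split on $m$, but it shows which single comparison is genuinely delicate.

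\textbf{Where the difficulty actually lies.} It is not where you expected.
\begin{itemize}
\item On the side $r\ge\lfloor\frac{n-7}{2}\rfloor$ there is no obstacle. For $m\le 0$ the quadratic factor is negative for all $\lambda\ge 2$: it equals $-2$ for $m=0$ and $-\lambda(\lambda-1)$ for $m=-1$.
\item For $m\ge 2$ the root of $D_r$ is at most $\frac{1+\sqrt{13}}{2}$, and the star bound $\rho^2>s+1\ge 6$ settles the step.
\item The only delicate step is $m=1$. This occurs for odd $n$, moving from $(r^*-1,r^*+2)$ to $(r^*,r^*+1)$, and it needs $\rho^2>\frac{9+\sqrt{17}}{2}\approx 6.56$. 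The star bound gives $\rho^2>\frac{n-1}{2}$, which suffices for $n\ge 15$. So only $n=13$ remains, which is precisely the paper's $t=-1$ case.
\end{itemize}

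\textbf{The sign problem at $n=13$.} You planned to show the polynomial is positive at the root $\mu=\frac{1+\sqrt{17}}{2}$ of $D_r$ and increasing beyond it, then invoke Lemma~\ref{poly}(ii). That would place $\rho$ below $\mu$, which is false: $\rho(G_5(3,4))\approx 2.570>\mu$. What you need is the opposite sign. Using $\mu^2=\mu+4$, the reduced sextic of $G_5(2,5)$ takes the value $8-2\sqrt{17}<0$ at $\mu$. (It agrees there with that of $G_5(3,4)$, since $D_2(\mu)=0$.) Hence $\rho(G_5(2,5))>\mu$, so $D_2>0$ on $[\rho(G_5(2,5)),\infty)$, and Lemma~\ref{poly}(i), not (ii), finishes the step.
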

\pf One can verify that the characteristic polynomial of $G_5(r,n-r-6)$ is 
\begin{align*}
	\phi_\lbd(G_5(r,n-r-6))=&\lbd^{n-7}(\lbd^7-n\lbd^6-2\lbd^4+(-r^2+(n-7)r+5n-22)\lbd^3+2(n-4)\lbd^2\\
	&-(-3r^2+(3n-19)r+4n-23)\lbd-2(-r^2+(n-6)r+n-6))\\
	=&\lbd^{n-7}f(\lbd) \text{ (say)}.
\end{align*}
Similarly, one can calculate that $\phi_\lbd\left(G_5\left(\lfloor\frac{n-7}{2}\rfloor,\lceil\frac{n-5}{2}\rceil\right)\right)=\lbd^{n-7}g(\lbd)$, where 
$$g(\lbd)=
\begin{cases}
	\begin{aligned}
		&\lbd^7-(2k-1)\lbd^5-2\lbd^4+(k^2+2k-11)\lbd^3+2(2k-5)\lbd^2\\
		&-(3k^2-14k+13)\lbd-2(k^2-5k+5),
	\end{aligned}
	& \text{if } n=2k-1,\\[10pt]
	
	\begin{aligned}
		&\lbd^7-2k\lbd^5-2\lbd^4+(k^2+3k-10)\lbd^3+2(2k-4)\lbd^2\\
		&-(3k^2-11k+5)\lbd-2(k^2-4k+2),
	\end{aligned}
	& \text{if } n=2k.
\end{cases}$$

Let $h(\lbd)=g(\lbd)-f(\lbd)$. We now consider the following two cases:

\noindent\textbf{Case 1.} Let $n$ be odd and $n=2k-1$ for some integer $k\geq 7$. Then 
\begin{align*}
	h(\lbd)
	=&\left(r^2-(2k-8)r+k^2-8k+16\right)\lbd^3-\left(3r^2-(6k-22)r+3k^2-22k+40\right)\lbd\\&-2\left(r^2-(2k-7)r+k^2-7k+12\right)\\
	=&(r-(k-4))^2\lbd^3-(3(r-(k-4))^2-2(r-(k-4)))\lbd-2((r-(k-4))^2-(r-(k-4)))\\
	=&t^2\lbd^3-(3t^2-2t)\lbd-2(t^2-t),
\end{align*}
where $t=r-(k-4)$. Thus, $h(\lbd)=t^2(\lbd^3-3\lbd-2)+2t(\lbd+1)=t(\lbd+1)(t(\lbd-2)(\lbd+1)+2)$. Note that $t\in\{-(k-5),-(k-6),\ldots,-1,0,1,\ldots, k-4\}$. If $t=0$, i.e., $r=k-4$, then $h(\lbd)=0$ and the result follows. Otherwise, the largest root of $h(\lbd)$ is the largest root of the polynomial $\lbd^2-\lbd-2+\frac{2}{t}$, i.e, $$\frac{1+\sqrt{9-\frac{8}{t}}}{2}.$$

If $t\geq 1$, then the largest root of $h(\lbd)$ lies in $[1,2)$. Since $k\geq 7$, we have either $r+2\geq 4$ or $2k-r-6\geq 4$. Then the largest root of $f(\lbd)$ is $\rho(G_5(r,2k-7-r))>2$. By Lemma \ref{poly}(i), we have $\rho(G_5(k-4,k-3))< \rho(G_5(r,2k-7-r))$.

If $t\leq -1$, then assume $t=-s$. Note that $s\in \{1,2,\ldots,k-5\}$. If $k\leq 8$, then by direct computation, we can show that $\rho(G_5(k-4,k-3))< \rho(G_5(r,2k-7-r))$. So, we assume that $k\geq 9$. A careful computation implies that either $r+2\geq 7$ or $2k-6-r\geq 7$. So, $\rho(G_5(r,2k-7-r))\geq \sqrt{7}$. Since $$\frac{1+\sqrt{9-\frac{8}{t}}}{2}=\frac{1+\sqrt{9+\frac{8}{s}}}{2}\leq \frac{1+\sqrt{17}}{2}<\sqrt{7},$$ we have $h(\lbd)>0$ for $\lbd\geq \rho(G_5(r,2k-7-r))$. By Lemma \ref{poly}(i), we have $\rho(G_5(k-4,k-3))< \rho(G_5(r,2k-7-r))$.

\medskip
\noindent\textbf{Case 2.} $n$ is even and $n=2k$ for some integer $k\geq 6$. Then 
\begin{align*}
	h(\lbd)
	=&\left(r^2-(2k-7)r+k^2-7k+12\right)\lbd^3-\left(3r^2-(6k-19)r+3k^2-19k+28\right)\lbd\\&-2\left(r^2-(2k-6)r+k^2-6k+8\right).
\end{align*}
The rest of the proof is similar to Case 1.
\qe

\begin{table}[ht]
\centering
	\begin{tabular}{ |c|c|c|c| } 
		\hline
		$n$ & $\rho\left(G_5\left(\left\lfloor\frac{n-7}{2}\right\rfloor,\left\lceil\frac{n-5}{2}\right\rceil\right)\right)\approx$ & $\rho\left(G_6\left(\left\lfloor\frac{n-5}{2}\right\rfloor,\left\lceil\frac{n-5}{2}\right\rceil\right)\right)\approx$ & $ \rho\left(U_{n,3}^*\right)\approx$ \\[5pt]
		\hline
		12 & 2.47283 & 2.59649 & 2.44949 \\ 
		13 & 2.57011 & 2.64261 & 2.52812 \\ 
		14 & 2.63371 & 2.71749 & 2.59205 \\ 
		15 & 2.72676 & 2.76618 & 2.64575 \\
		16 & 2.80058 & 2.84904 & 2.71871 \\
		17 & 2.88644 & 2.89812 & 2.77825 \\
		18 & 2.96471 & 2.98631 & 2.82843 \\
		19 & 3.04378 & 3.03423 & 2.89679 \\
		20 & 3.12337 & 3.12557 & 2.95275 \\
		21 & 3.19668 & 3.17148 & 3 \\
		22 & 3.27601 & 3.26438 & 3.06454 \\
		23 & 3.34451 & 3.30788 & 3.11748 \\
		24 & 3.42279 & 3.40128 & 3.16228 \\
		25 & 3.48726 & 3.44229 & 3.22357 \\
		\hline
	\end{tabular}
\caption{Approximate value of the spectral radius of $G_5\left(\left\lfloor\frac{n-7}{2}\right\rfloor,\left\lceil\frac{n-5}{2}\right\rceil\right)$, $G_6\left(\left\lfloor\frac{n-5}{2}\right\rfloor,\left\lceil\frac{n-5}{2}\right\rceil\right)$ and $U_{n,3}^*$ for $12 \le n \le 25$.}
\label{tab:table}
\end{table}

Next, we compare $\rho\left(G_5\left(\left\lfloor\frac{n-7}{2}\right\rfloor,\left\lceil\frac{n-5}{2}\right\rceil\right)\right)$ and $\rho\left(G_6\left(\left\lfloor\frac{n-5}{2}\right\rfloor,\left\lceil\frac{n-5}{2}\right\rceil\right)\right)$. For $12\leq n\leq 24$, the comparison can be carried out by directly computing the spectral radii of the corresponding graphs, or by using mathematical software (for reference see Table~\ref{tab:table}). The computations show that if $n=12,13,14,15,16,17,18,20,$ then 
$$\rho\left(G_5\left(\left\lfloor\frac{n-7}{2}\right\rfloor,\left\lceil\frac{n-5}{2}\right\rceil\right)\right) <\rho\left(G_6\left(\left\lfloor\frac{n-5}{2}\right\rfloor,\left\lceil\frac{n-5}{2}\right\rceil\right)\right).$$
On the other hand, if $n=19,21,22,23, 24,$ then $$\rho\left(G_5\left(\left\lfloor\frac{n-7}{2}\right\rfloor,\left\lceil\frac{n-5}{2}\right\rceil\right)\right) >\rho\left(G_6\left(\left\lfloor\frac{n-5}{2}\right\rfloor,\left\lceil\frac{n-5}{2}\right\rceil\right)\right).$$ For the remaining values of $n$, the following result establishes the comparison between the spectral radii of $G_5(\lfloor\frac{n-7}{2}\rfloor,\lceil\frac{n-5}{2}\rceil)$ and $G_6(\lfloor\frac{n-5}{2}\rfloor,\lceil\frac{n-5}{2}\rceil)$.

\begin{lem}\label{lem:3-cycle-comp}
	Let $n\geq 25$ be a positive integer. Then the following assertions hold:
	\bdsc
	\item {\rm (i)} If $n$ is odd and $n=2k-1$ for $k\geq 12$, then $\rho(G_5(k-4,k-3))>\rho(G_6(k-3,k-3))$.
	\item {\rm (ii)} If $n$ is even and $n=2k$ for $k\geq 13$, then $\rho(G_5(k-4,k-2))>\rho(G_6(k-3,k-2))$.
	\edsc
\end{lem}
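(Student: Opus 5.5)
The approach is the same as in Lemma~\ref{lem:U-19-min}: compare characteristic polynomials and conclude with Lemma~\ref{poly}(i). The polynomial $g(\lbd)$ (with $\phi_\lbd(G_5(\cdot,\cdot))=\lbd^{n-7}g(\lbd)$) is already available from the proof of Lemma~\ref{lem:U-19-min}, in both parities. So the first step is to compute $\phi_\lbd\bigl(G_6(r,n-5-r)\bigr)$ for general $r$.

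\textbf{Computing the $G_6$ polynomial.} I would delete the pendant vertices one star at a time, using the standard recurrence for a pendant vertex, $\phi(G)=\lbd\phi(G-w)-\phi(G-w-w')$. This is just the Schwenk-type reduction for the two stars centred at $v_1$ and $v_5$, which hang off the triangle $v_2v_3v_4$. I expect the result to have the form $\lbd^{n-7}q(\lbd)$ with $q$ monic of degree $7$. When $r=n-5-r$, the automorphism swapping the two sides should split off a factor such as $(\lbd+1)$ or a quadratic, which simplifies the even case. I would then specialise to $r=s=k-3$ when $n=2k-1$, and to $(r,s)=(k-3,k-2)$ when $n=2k$.

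\textbf{Reducing to a difference polynomial.} Next I would form $D(\lbd)=q(\lbd)-g(\lbd)$ separately for $n=2k-1$ and $n=2k$. Both polynomials have leading terms $\lbd^7-n\lbd^5$, and the sum of squared eigenvalues is $2|E|=2n$ in both, so these terms cancel. Hence $D$ has degree at most $4$ in $\lbd$, with coefficients polynomial in $k$. To prove $\rho(G_5)>\rho(G_6)$, Lemma~\ref{poly}(i) requires $q(\lbd)>g(\lbd)$, that is $D(\lbd)>0$, for all $\lbd\ge\rho(G_5)$.

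So I would find an explicit threshold $\theta(k)$ such that $D(\lbd)>0$ for $\lbd\ge\theta(k)$. The natural tool is to write $D$ in the variable $\lbd^2$ or $\lbd-\sqrt{k}$, and to check positivity of $D$ and its derivatives at $\theta(k)$, exactly as in Lemma~\ref{ab3}. I would then show $\rho(G_5)\ge\theta(k)$.

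\textbf{Main obstacle: a sharp lower bound on $\rho(G_5)$.} Table~\ref{tab:table} shows the gap is only about $0.04$ at $n=25$, so crude star bounds like $\rho(G_5)>\sqrt{k-2}$ will certainly be too weak. I would instead evaluate $g$ at a test point $\lbd_0=\sqrt{k+c}$ for a well-chosen constant $c$. Showing $g(\lbd_0)<0$ gives $\rho(G_5)>\lbd_0$, because $g$ is monic. I would fit $c$ by asymptotics: both spectral radii are $\sqrt{k}+O(1/\sqrt{k})$, so the comparison is decided at order $1/\sqrt{k}$. This is why I would pick $c$ strictly between the two constant terms of the expansions $\rho^2=k+c_5+o(1)$ and $\rho^2=k+c_6+o(1)$, found from the two-star structure.

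Finally, $D(\lbd_0)>0$ and $g(\lbd_0)<0$ reduce to polynomial inequalities in $\sqrt{k}$. These should hold for $k\ge 12$ (odd $n$) and $k\ge 13$ (even $n$), possibly after checking the few smallest $k$ numerically against Table~\ref{tab:table}. I expect this tuning of $c$, and verifying the resulting inequality down to the stated thresholds, to be the delicate step.
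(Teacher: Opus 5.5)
There is a genuine gap at the step where you pass to the difference polynomial. As you note, the $\lambda^6$ and $\lambda^5$ terms cancel. The $\lambda^4$ terms cancel as well, since both graphs contain exactly one triangle, so $D=q-g$ is a cubic. Its leading coefficient, however, is $-1$. The $\lambda^{n-4}$ coefficient counts pairs of independent edges, and $G_6$ has exactly one fewer such pair than $G_5$; for instance, in (i) the coefficients are $k^2+2k-12$ versus $k^2+2k-11$. Working it out, in both parities
\[
D(\lambda)=q(\lambda)-g(\lambda)=(2k-8)(\lambda+1)-\lambda^{2}(\lambda+2),
\]
which is negative for every $\lambda\ge\sqrt{2k}$. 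So no threshold $\theta(k)$ with $D>0$ on $[\theta(k),\infty)$ exists, and the hypothesis of Lemma~\ref{poly}(i) can never be met. Derivative checks in the style of Lemma~\ref{ab3} cannot rescue this, since $D'''=-6$. It is true that $D(\rho(G_5))>0$, but that only gives $q(\rho(G_5))>0$. On its own this does not place $\rho(G_5)$ above the largest root of $q$.

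The missing idea is to use your test point to bound both spectral radii, rather than routing through $D$. The paper takes $l=\sqrt{k-1}$ in (i). There $g(l)=-2(l+1)<0$, so $\rho(G_5)>l$. For $G_6$, $q(l)=(l+2)(l^2-2l-4)>0$, which is exactly where $k\ge 12$ is needed, and $q$ is increasing on $[l,\infty)$ (checked via $q'(l)>0$ and $q''>0$ there), so $\rho(G_6)<l$. In (ii) the same argument works with $\sqrt{k-1/2}$, and it holds only barely at $k=13$.

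Your asymptotic heuristic is consistent with these choices: $\rho(G_5)^2=k-1+o(1)$ and $\rho(G_6)^2=k-2+o(1)$ in (i), while in (ii) $\rho(G_5)^2=k+\tfrac{\sqrt5-3}{2}+o(1)$ and $\rho(G_6)^2=k-1+o(1)$.

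One small correction: the symmetric situation $r=n-5-r$ occurs for odd $n$, i.e.\ in case (i), not the even case. There, with $r=k-3$, the polynomial factors as
\[
q=\bigl(\lambda^3+\lambda^2-(r+1)\lambda-r\bigr)\bigl(\lambda^4-\lambda^3-(r+3)\lambda^2+r\lambda+2r\bigr),
\]
which makes the sign and monotonicity checks at $l$ routine.
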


\pf (i) From Lemma \ref{lem:U-19-min}, we have $\phi_\lbd\left(G_5\left(k-4,k-3\right)\right)=\lbd^{n-7}f(\lbd)$, where 
\begin{align*}
	f(\lbd)=&\lbd^7-(2k-1)\lbd^5-2\lbd^4+(k^2+2k-11)\lbd^3+2(2k-5)\lbd^2-(3k^2-14k+13)\lbd\\
	&-2(k^2-5k+5).
\end{align*}

One can calculate that $\phi_\lbd\left(G_6(k-3,k-3)\right)=\lbd^{n-7}g(\lbd)$, where 
\begin{align*}
	g(\lbd)=&\lbd^7-(2k-1)\lbd^5-2\lbd^4+(k^2+2k-12)\lbd^3+2(2k-6)\lbd^2-(3k^2-16k+21)\lbd\\&-2(k-3)^2.
\end{align*}

Let $l=\sqrt{k-1}$. Then $l\geq \sqrt{11}$. Observe that $f(l)=-2(l+1)<0$. Thus, $\rho\left(G_5(k-4,k-3)\right)>l$. On the other hand, it can be checked that $$g(l)=(l+2)(l-1+\sqrt{5})(l-1-\sqrt{5})>0.$$ If $\lbd\geq l$, then we have 
\begin{align*}
	g''(\lbd)&=42\lbd^5-20(2l^2+1)\lbd^3-24\lbd^2+6(l^4+4l^2-9)\lbd+8l^2-16\\
	&=40\lbd^3(\lbd^2-l^2)+\lbd(\lbd^4-20\lbd^2+6l^4+24l^2-154)+\lbd^5-24\lbd^2+8l^2-16\\
	&= 40\lbd^3(\lbd^2-l^2)+\lbd((\lbd^2-10)^2+6(l^2+2)^2-178)+\lbd^2 (\lbd^3-24)+8l^2-16>0.
\end{align*}
Hence $g'(\lbd)$ is a strictly increasing function for $\lbd\geq l$. Since $g'(l)=4l^4-17l^2-16l-8=(2l^2+l+4)(2l^2-l-12)+40>0$, we have $g(\lbd)\geq g(l)$ for $\lbd\geq l$. It follows that $\rho\left(G_6(k-4,k-3)\right)<l.$

(ii) Notice that in this case
\begin{align*}\frac{1}{\lbd^{n-7}}\phi_\lbd\left(G_5\left(k-4,k-2\right)\right)=f(\lbd)=&\lbd^7-2k\lbd^5-2\lbd^4+(k^2+3k-10)\lbd^3+2(2k-4)\lbd^2\\&-(3k^2-11k+5)\lbd-2(k^2-4k+2)
\end{align*}
and
\begin{align*}
	\frac{1}{\lbd^{n-7}}\phi_\lbd\left(G_6\left(k-3,k-2\right)\right)=g(\lbd)=&\lbd^7-2k\lbd^5-2\lbd^4+(k^2+3k-11)\lbd^3+2(2k-5)\lbd^2\\&-(3k^2-13k+13)\lbd-2(k^2-5k+6).
\end{align*}

Using a similar approach as in (i), we can prove that $\rho(G_5(k-4,k-2))>\sqrt{k-\frac{1}{2}}>\rho(G_6(k-3,k-2))$.
\qe

Let $n\geq 25$ be a positive integer. Consider a graph $G_7$ obtained from a path $P_6=[v_1,v_2,v_3,v_4, v_5,v_6]$ and then adding the edge $v_2v_5$ in it. Let $r$ and $s$ be two positive integers and $G_7(r,s)$ be the graph obtained from $G_7$ by attaching $r$ pendant vertices to the vertex $v_1$ and $s$ pendant vertices to the vertex $v_6$. In the next result, we provide a lower bound on the spectral radius of the graph $G_7\left(\left\lfloor\frac{n-5}{2}\right\rfloor-1,\left\lceil\frac{n-5}{2}\right\rceil\right)$, which will be used later. 


\bl\label{cycle_3to4_bound}
Let $n\geq 25$ be a positive integer. Then the following assertions hold:
\bdsc
\item {\rm (i)} If $n$ is even and $n=2k$, then $k-1<\rho(G_7(k-4,k-2))^2$.
\item {\rm (i)} If $n$ is odd and $n=2k+1$, then $k-1<\rho(G_7(k-3,k-2))^2$.
\edsc
\el

\pf To prove (i), let $n$ be even and $n=2k$. It is easy to observe that the square of the spectral radius of $G_7(k-4,k-2)$ is the largest root of the polynomial $$f(\lbd)=\lbd^3-2k\lbd^2+(k^2+4k-17)\lbd-(2k-3)(2k-7).$$
A direct calculation yields $f(k-1)=-5<0$. Hence, the result follows.

The proof of (ii) is similar to that of (i).\qe

Let $n\geq 25$ be a positive integer. Suppose that $n=3k+l$, where $k$ is an integer and $l\in \{-1,0,1\}$. We now compare the spectral radius of the graphs $G_7(\lfloor\frac{n-5}{2}\rfloor-1,\lceil\frac{n-5}{2}\rceil)$ and $G_2(k,k-4+l,k-2)$. For $n\geq 25$, the following result is an immediate consequence of Lemmas \ref{cycle_3to4_bound} and \ref{cycle_4_bound}.

\begin{lem}\label{lem:G4-G2}
	Let $n\geq 25$ be a positive integer such that $n=3k+l$, where $k$ is positive integer and $l\in \{-1,0,1\}$. Then $\rho(G_7(\lfloor\frac{n-5}{2}\rfloor-1,\lceil\frac{n-5}{2}\rceil))>\rho(G_2(k,k-4+l,k-2))$.
\end{lem}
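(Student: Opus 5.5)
The plan is to compare squared spectral radii, chaining the lower bound of Lemma~\ref{cycle_3to4_bound} for $G_7$ with the upper bounds of Lemma~\ref{cycle_4_bound} for $G_2(k,k-4+l,k-2)$. In Lemma~\ref{cycle_4_bound}, with $n=3a+l$ (so $a=k$), we have $\rho(G_2(k,k-4+l,k-2))^2=\mu_1(k,k-4+l,k-2)$. This quantity is less than $k+2$ when $l=-1$, equal to $k+2$ when $l=0$, and less than $k+3$ when $l=1$. Thus it suffices to show that $\rho(G_7(\lfloor\frac{n-5}{2}\rfloor-1,\lceil\frac{n-5}{2}\rceil))^2$ exceeds $k+3$, or $k+2$ in the cases $l\in\{-1,0\}$.

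Write $n=2m$ or $n=2m+1$, so that $\lfloor\frac{n-5}{2}\rfloor-1$ and $\lceil\frac{n-5}{2}\rceil$ become $(m-4,m-2)$ and $(m-3,m-2)$ respectively, matching Lemma~\ref{cycle_3to4_bound}. That lemma gives $\rho(G_7(\cdot,\cdot))^2>m-1$. Since $m\approx n/2$ and $k\approx n/3$, we have $m-1\ge k+3$ as soon as $n/2-3/2\ge n/3+1/3+3$, that is, roughly $n\ge 29$.

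For concreteness, treat the six residue combinations of $n$ modulo $6$. In each, express $m-1-(k+3)$ (or $m-1-(k+2)$ when $l\le 0$) as a function of $n$ and check that it is nonnegative for every $n\ge 25$. Strict inequality then follows from $\rho(G_7)^2>m-1\ge\text{bound}>\rho(G_2)^2$, or $\ge\rho(G_2)^2$ in the case $l=0$. Note that Lemma~\ref{cycle_4_bound} requires $n\ge 14$, which is satisfied.

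The main obstacle is the small-$n$ window, roughly $25\le n\le 30$, where $m-1$ may fall below $k+2$ or $k+3$. For example, $n=25$ gives $m=12$, $m-1=11$, $k=8$, $l=1$, and the bound $k+3=11$ is not strictly beaten. For these finitely many $n$, I would sharpen the estimate by evaluating the cubic $f$ from Lemma~\ref{cycle_3to4_bound} at the exact value $\mu_1(k,k-4+l,k-2)$, or at the bound $k+2$ or $k+3$, and showing $f<0$ there. Failing that, I would fall back on direct numerical computation of both spectral radii, in the style of Table~\ref{tab:table}.
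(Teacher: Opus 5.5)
Your proposal is correct and is essentially the paper's own argument. The paper simply chains $\rho(G_7(\cdot,\cdot))^2>m-1$ from Lemma~\ref{cycle_3to4_bound} with the bounds $\mu_1(k,k-4+l,k-2)<k+2$, $=k+2$, $<k+3$ (for $l=-1,0,1$) from Lemma~\ref{cycle_4_bound}.

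However, the small-$n$ obstacle you anticipate does not exist. Your estimate $n\ge 29$ pairs the worst case $k=(n+1)/3$, which occurs for $l=-1$, with the bound $k+3$, which is needed only for $l=1$. Handling each $l$ separately gives $m-1\ge k+2$ for $l\le 0$ and $m-1\ge k+3$ for $l=1$, for every $n\ge 25$. The only tight case is $n=25$, where $m-1=k+3=11$. This is harmless because both outer inequalities in your chain $\rho(G_7)^2>m-1\ge \text{bound}>\rho(G_2)^2$ are strict, so no extra cubic evaluation or numerical fallback is needed.
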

\pf The proof follows from Lemmas \ref{cycle_3to4_bound} and \ref{cycle_4_bound}.\qe

The following results are important to proceed further.
\bt\label{thm:G6} Let $r$ and $n$ be two positive integers such that $n\geq 12$ and $r\leq n-6$. Then $\rho(G_6(r,n-5-r))>\rho(U_{n,3}^*)$.
\et
\pf Note that by Lemma \ref{lem-pendant}, we have $\rho(G_6(r,n-5-r))\geq \rho\left(G_6\left(\left\lfloor\frac{n-5}{2}\right\rfloor,\left\lceil\frac{n-5}{2}\right\rceil\right)\right)$ with equality if and only if $r=\left\lfloor\frac{n-5}{2}\right\rfloor$. If $12\leq n\leq 24$, then it follows from Table \ref{tab:table} that $\rho(G_6(r,n-5-r))\geq \rho\left(G_6\left(\left\lfloor\frac{n-5}{2}\right\rfloor,\left\lceil\frac{n-5}{2}\right\rceil\right)\right)>\rho(U_{n,3}^*)$.

Now, assume that $n\geq 25$. Then from Lemma \ref{lem-sub}, it follows that $$\rho\left(G_6\left(\left\lfloor\frac{n-5}{2}\right\rfloor,\left\lceil\frac{n-5}{2}\right\rceil\right)\right)>\rho\left(G_7\left(\left\lfloor\frac{n-5}{2}\right\rfloor-1,\left\lceil\frac{n-5}{2}\right\rceil\right)\right).$$ By Lemma \ref{lem:G4-G2}, we have the required result.\qe

\bt\label{thm:G5} Let $r$ and $n$ be two positive integers such that $n\geq 12$ and $r\leq n-7$. Then $\rho(G_5(r,n-6-r))>\rho(U_{n,3}^*)$.
\et
\pf Note that by Lemma \ref{lem:U-19-min}, we have $\rho(G_5(r,n-5-r))\geq \rho\left(G_5\left(\left\lfloor\frac{n-7}{2}\right\rfloor,\left\lceil\frac{n-5}{2}\right\rceil\right)\right)$ with equality if and only if $r=\left\lfloor\frac{n-7}{2}\right\rfloor$. If $12\leq n\leq 24$, then it follows from Table \ref{tab:table} that $\rho(G_5(r,n-5-r))>\rho(U_{n,3}^*)$.

Now, assume that $n\geq 25$. By Lemmas \ref{lem-sub} and \ref{lem:3-cycle-comp}, we have 
\begin{align*}
    \rho\left(G_5\left(\left\lfloor\frac{n-7}{2}\right\rfloor,\left\lceil\frac{n-5}{2}\right\rceil\right)\right)&>\rho\left(G_6\left(\left\lfloor\frac{n-5}{2}\right\rfloor,\left\lceil\frac{n-5}{2}\right\rceil\right)\right)\\
    &>\rho\left(G_7\left(\left\lfloor\frac{n-5}{2}\right\rfloor-1,\left\lceil\frac{n-5}{2}\right\rceil\right)\right).
\end{align*} Thus, Lemma \ref{lem:G4-G2} implies the required result.\qe

Next, we provide the main result of this subsection that compares the spectral radius of a graph $G$ in $U_{n,3}$ with girth $3$ is greater than the spectral radius of $U_{n,3}^*$.

\bt\label{thm-3-cycle}
    Let $n \ge 12$ be an integer. If $G \in \mathcal{U}_{n,3}$ has a cycle of length $3$, then $\rho(G) > \rho(U_{n,3}^*)$.
\et

\begin{proof}
   Let us consider $G \in \mathcal{U}_{n,3}$ to be a graph which contains a $3$-cycle $C=[v_1,v_2,v_3,v_1]$ (say). Let $\mathcal{M}$ be the collection of all maximal matchings of $G$. Next, based on the maximal matching of $G$, we divide into the following cases:

   \begin{itemize}
       \item[Case 1.] For all $\Gamma \in \mathcal{M}$, every edge in $\Gamma$ has at least one endpoint on the $3$-cycle. Then the only possible structure of $G$ is as follows: A $3$-cycle, with pendant edges attached to each vertex of the cycle (see Figure~\ref{fig:3-1}). Let $G^*$ be the graph obtained from $G$ by subdividing the edge $v_1v_2$. Then using Lemma~\ref{lem-sub}, we have $\rho(G^*) < \rho(G)$, but $G^*$ is a graph on $n + 1$ vertices with a $4$-cycle. Next, let $G^{**}$ be the subgraph of $G^*$ obtained by deleting one pendant vertex of $G^*$, then $G^{**} \in \mathcal{U}_{n,3}$ and using Lemma~\ref{lem:subgraph} and Theorem \ref{main_1}, we have $\rho(U_{n,3}^*)\leq\rho(G^{**}) < \rho(G^*) < \rho(G)$. 

\begin{figure}[ht]
    \centering
\begin{tikzpicture}[scale=1]
  \coordinate (A) at (0.00, 0.00);
  \coordinate (B) at (2.00, 0.00);
  \coordinate (C) at (1.00, 2.00);
  \coordinate (D) at (0.50, 3.00);
  \coordinate (E) at (1.50, 3.00);
  \coordinate (F) at (3.00, 0.50);
  \coordinate (G) at (3.00, -0.50);
  \coordinate (H) at (-1.00, 0.50);
  \coordinate (I) at (-1.00, -0.50);

  \draw (0,0) node[anchor=north] {$v_3$};
  \draw (2,0) node[anchor=north] {$v_2$};
  \draw (1,2) node[anchor=west] {$v_1$};

  \draw (A) -- (B);
  \draw (A) -- (C);
  \draw (C) -- (B);
  \draw (C) -- (D);
  \draw (C) -- (E);
  \draw (B) -- (F);
  \draw (B) -- (G);
  \draw (H) -- (A);
  \draw (I) -- (A);
  \fill[black] (A) circle (2pt);
  \fill[black] (B) circle (2pt);
  \fill[black] (C) circle (2pt);
  \fill[black] (D) circle (2pt);
  \fill[black] (E) circle (2pt);
  \fill[black] (F) circle (2pt);
  \fill[black] (G) circle (2pt);
  \fill[black] (H) circle (2pt);
  \fill[black] (I) circle (2pt);

  \draw[loosely dotted, line width=1.5pt] (3, -0.50) -- (3, 0.50);
  \draw[loosely dotted, line width=1.5pt] (-1, -0.50) -- (-1, 0.50);
  \draw[loosely dotted, line width=1.5pt] (0.50, 3.00) -- (1.50, 3.00);
\end{tikzpicture}
    \caption{Unicyclic graph with a $3$-cycle and $\gamma = 3$.}
    \label{fig:3-1}
\end{figure}
       \item[Case 2.] For all $\Gamma \in \mathcal{M}$, there exists at least one edge in $\Gamma$ such that both of its endpoints are not part of the $3$-cycle. Let the edge be $uv$. Then $G-u-v$ consists of a graph containing a $3$-cycle, possibly together with some isolated vertices ($K_1$'s) and has $\gamma=2$. Thus, it follows from Theorem~\ref{thm-gamma-2} that the nontrivial component of $G-u-v$ is either $U_1$ or $U_2$ (see Figure~\ref{mat_2_cycle_3}). Without loss of generality, we assume that the vertex $u$ is closer to the vertices of the $3$-cycle than $v$.
\begin{itemize}
    \item[Subcase 2.1.] Suppose that the nontrivial component of $G-u-v$ is $U_1$. Since $\gamma=3$, we have $\min\{ d(u,v_1),d(u,v_2),d(u,v_3)\} \le 2$. Thus, $G$ is isomorphic to one of the graphs $U_{12}$, $U_{13}$, or $U_{14}$ (see Figure~\ref{fig:3-2-1}). In the graphs $U_{12}, U_{13}$, and $U_{14}$, one of $v_1$ or $v_2$ may have no pendant neighbors, but at least one of them has at least one pendant neighbor.
    \begin{figure}[ht]
    \centering
    \begin{subfigure}[b]{0.45\textwidth}
        \centering
        \begin{tikzpicture}[scale=0.8] 
          \coordinate (A) at (0.00, 0.00);
          \coordinate (B) at (2.00, 0.00);
          \coordinate (C) at (1.00, 2.00);
          \coordinate (F) at (3.00, 0.50);
          \coordinate (G) at (3.00, -0.50);
          \coordinate (H) at (-1.00, 0.50);
          \coordinate (I) at (-1.00, -0.50);
          \coordinate (D) at (1.00, 3.00);
          \coordinate (E) at (0.50, 4.00);
          \coordinate (J) at (1.50, 4.00);
        
          \draw (0,0) node[anchor=north] {$v_1$};
          \draw (2,0) node[anchor=north] {$v_2$};
          \draw (1,2) node[anchor=west] {$v_3$};
          \draw (1,3) node[anchor=west] {$u$};
          \draw (1.5,4) node[anchor=west] {$v$};
        
          \draw (A) -- (B); \draw (A) -- (C); \draw (C) -- (B);
          \draw (B) -- (F); \draw (B) -- (G); \draw (H) -- (A);
          \draw (I) -- (A); \draw (C) -- (D); \draw (D) -- (E); \draw (D) -- (J);
          \foreach \p in {A,B,C,F,G,H,I,D,E,J} \fill[black] (\p) circle (2pt);

          \draw[loosely dotted, line width=1.5pt] (3, -0.50) -- (3, 0.50);
          \draw[loosely dotted, line width=1.5pt] (-1, -0.50) -- (-1, 0.50);
          \draw[loosely dotted, line width=1.5pt] (0.50, 4.00) -- (1.50, 4.00);
        \end{tikzpicture}
        \caption{$U_{12}$}
    \end{subfigure}
    \begin{subfigure}[b]{0.45\textwidth}
        \centering
        \begin{tikzpicture}[scale=0.8]
          \coordinate (A) at (0.00, 0.00);
          \coordinate (B) at (2.00, 0.00);
          \coordinate (C) at (1.00, 2.00);
          \coordinate (H) at (-1.00, 0.50);
          \coordinate (I) at (-1.00, -0.50);
          \coordinate (D) at (1.50, -1.00);
          \coordinate (E) at (2.50, -1.00);
          \coordinate (F) at (3.00, 0.00);
          \coordinate (G) at (4.00, 0.50);
          \coordinate (J) at (4.00, -0.50);
        
          \draw (0,0) node[anchor=north] {$v_1$};
          \draw (2,0) node[anchor=south] {$v_2$};
          \draw (1,2) node[anchor=west] {$v_3$};
          \draw (3,0) node[anchor=north] {$u$};
          \draw (4,-0.5) node[anchor=north] {$v$};
        
          \draw (A) -- (B); \draw (A) -- (C); \draw (C) -- (B);
          \draw (H) -- (A); \draw (I) -- (A); \draw (B) -- (D);
          \draw (B) -- (E); \draw (B) -- (F); \draw (F) -- (G); \draw (F) -- (J);
          \foreach \p in {A,B,C,H,I,D,E,F,G,J} \fill[black] (\p) circle (2pt);

          \draw[loosely dotted, line width=1.5pt] (1.50, -1.00) -- (2.50, -1.00);
          \draw[loosely dotted, line width=1.5pt] (-1, -0.50) -- (-1, 0.50);
          \draw[loosely dotted, line width=1.5pt] (4.00, 0.50) -- (4.00, -0.50);
        \end{tikzpicture}
        \caption{$U_{13}$}
    \end{subfigure}
    \begin{subfigure}[b]{0.45\textwidth}
        \centering
        \begin{tikzpicture}[scale=0.8]
          \coordinate (A) at (0.00, 0.00);
          \coordinate (B) at (2.00, 0.00);
          \coordinate (C) at (1.00, 2.00);
          \coordinate (H) at (-1.00, 0.50);
          \coordinate (I) at (-1.00, -0.50);
          \coordinate (D) at (1.50, -1.00);
          \coordinate (E) at (2.50, -1.00);
          \coordinate (F) at (3.00, 0.00);
          \coordinate (G) at (4.00, 0.00);
          \coordinate (J) at (5.00, 0.50);
          \coordinate (K) at (5.00, -0.50);
        
          \draw (0,0) node[anchor=north] {$v_1$};
          \draw (2,0) node[anchor=south] {$v_2$};
          \draw (1,2) node[anchor=west] {$v_3$};
          \draw (5,-0.5) node[anchor=north] {$v$};
          \draw (4,0) node[anchor=north] {$u$};
        
          \draw (A) -- (B); \draw (A) -- (C); \draw (C) -- (B);
          \draw (H) -- (A); \draw (I) -- (A); \draw (B) -- (D);
          \draw (B) -- (E); \draw (B) -- (F); \draw (F) -- (G); \draw (G) -- (J); \draw (G) -- (K);
          \foreach \p in {A,B,C,H,I,D,E,F,G,J,K} \fill[black] (\p) circle (2pt);
          
          \draw[loosely dotted, line width=1.5pt] (1.50, -1.00) -- (2.50, -1.00);
          \draw[loosely dotted, line width=1.5pt] (-1, -0.50) -- (-1, 0.50);
          \draw[loosely dotted, line width=1.5pt] (5.00, 0.50) -- (5.00, -0.50);
        \end{tikzpicture}
        \caption{$U_{14}$}
    \end{subfigure}
    \caption{Unicyclic graphs with a $3$-cycle and $\gamma = 3$.}
    \label{fig:3-2-1}
\end{figure}

    Note that, in all the structures in Figure~\ref{fig:3-2-1}, if we subdivide the edge $v_1v_2$ and then take a subgraph by deleting a pendant vertex, then we get a graph which has a $4$-cycle, belongs to $\mathcal{U}_{n,3}$ and has spectral radius less than that of $G$. Hence, the result follows from Theorem \ref{main_1}.
    
    \item[Subcase 2.2.] Suppose that the nontrivial component of $G-u-v$ is $U_2$. Since $\gamma=3$, we have $\min\{ d(u,v_1),d(u,v_2),d(u,v_3)\} \le 2$. Thus, $G$ is isomorphic to one of the graphs $U_{15}, U_{16}, U_{17}$, and $U_{18}$ (see Figure~\ref{fig:3-2-3}).
    \begin{figure}[ht]
    \centering
    \begin{subfigure}[b]{0.45\textwidth}
        \centering
        \begin{tikzpicture}[scale=0.8]
  \coordinate (A) at (0.00, 0.00);
  \coordinate (B) at (2.00, 0.00);
  \coordinate (C) at (1.00, 2.00);
  \coordinate (F) at (3.00, 0.00);
  \coordinate (D) at (4.00, 0.00);
  \coordinate (E) at (5.00, 0.50);
  \coordinate (I) at (5.00, -0.50);
  \coordinate (G) at (2.50, -1.00);
  \coordinate (H) at (3.50, -1.00);

    \draw (0,0) node[anchor=north] {$v_1$};
    \draw (2,0) node[anchor=north] {$v_2$};
    \draw (1,2) node[anchor=west] {$v_3$};
    \draw (3,0) node[anchor=south] {$v_4$};
    \draw (4,0) node[anchor=south] {$u$};
    \draw (5,-1) node[anchor=south] {$v$};

  \draw (A) -- (B);
  \draw (A) -- (C);
  \draw (C) -- (B);
  \draw (B) -- (F);
  \draw (F) -- (D);
  \draw (D) -- (E);
  \draw (D) -- (I);
  \draw (F) -- (G);
  \draw (F) -- (H);
  \fill[black] (A) circle (2pt);
  \fill[black] (B) circle (2pt);
  \fill[black] (C) circle (2pt);
  \fill[black] (F) circle (2pt);
  \fill[black] (D) circle (2pt);
  \fill[black] (E) circle (2pt);
  \fill[black] (I) circle (2pt);
  \fill[black] (G) circle (2pt);
  \fill[black] (H) circle (2pt);

  \draw[loosely dotted, line width=1.5pt] (3.50, -1.00) -- (2.50, -1.00);
  \draw[loosely dotted, line width=1.5pt] (5.00, 0.50) -- (5.00, -0.50);
\end{tikzpicture}
        \caption{$U_{15}$}
    \end{subfigure}
    \begin{subfigure}[b]{0.45\textwidth}
        \centering
        \begin{tikzpicture}[scale=0.8]
  \coordinate (A) at (0.00, 0.00);
  \coordinate (B) at (2.00, 0.00);
  \coordinate (C) at (1.00, 2.00);
  \coordinate (F) at (3.00, 0.00);
  \coordinate (D) at (4.00, 0.00);
  \coordinate (G) at (2.50, -1.00);
  \coordinate (H) at (3.50, -1.00);
  \coordinate (E) at (5.00, 0.00);
  \coordinate (I) at (4.50, -1.00);
  \coordinate (J) at (5.50, -1.00);

    \draw (0,0) node[anchor=north] {$v_1$};
    \draw (2,0) node[anchor=north] {$v_2$};
    \draw (1,2) node[anchor=west] {$v_3$};
    \draw (3,0) node[anchor=south] {$v_4$};
    \draw (5,0) node[anchor=south] {$u$};
    \draw (5.8,-1.2) node[anchor=south] {$v$};

  \draw (A) -- (B);
  \draw (A) -- (C);
  \draw (C) -- (B);
  \draw (B) -- (F);
  \draw (F) -- (D);
  \draw (F) -- (G);
  \draw (F) -- (H);
  \draw (D) -- (E);
  \draw (E) -- (I);
  \draw (E) -- (J);
  \fill[black] (A) circle (2pt);
  \fill[black] (B) circle (2pt);
  \fill[black] (C) circle (2pt);
  \fill[black] (F) circle (2pt);
  \fill[black] (D) circle (2pt);
  \fill[black] (G) circle (2pt);
  \fill[black] (H) circle (2pt);
  \fill[black] (E) circle (2pt);
  \fill[black] (I) circle (2pt);
  \fill[black] (J) circle (2pt);

  \draw[loosely dotted, line width=1.5pt] (3.50, -1.00) -- (2.50, -1.00);
  \draw[loosely dotted, line width=1.5pt] (4.50, -1.00) -- (5.50, -1.00);
\end{tikzpicture}
        \caption{$U_{16}$}
    \end{subfigure}

\vspace{0.5cm} 

\begin{subfigure}[b]{0.45\textwidth}
        \centering
        \begin{tikzpicture}[scale=0.8]
  \coordinate (A) at (0.00, 0.00);
  \coordinate (B) at (2.00, 0.00);
  \coordinate (C) at (1.00, 2.00);
  \coordinate (F) at (3.00, 0.00);
  \coordinate (G) at (4.00, 0.50);
  \coordinate (H) at (4.00, -0.50);
  \coordinate (D) at (2.75, 0.75);
  \coordinate (E) at (3.02, 1.53);
  \coordinate (I) at (3.47, 0.96);

    \draw (0,0) node[anchor=north] {$v_1$};
    \draw (2,0) node[anchor=north] {$v_2$};
    \draw (1,2) node[anchor=west] {$v_3$};
    \draw (3,0) node[anchor=north] {$v_4$};
    \draw (2.75, 0.75) node[anchor=north] {$u$};
    \draw (3.65, 1) node[anchor=north] {$v$};

  \draw (A) -- (B);
  \draw (A) -- (C);
  \draw (C) -- (B);
  \draw (B) -- (F);
  \draw (B) -- (D);
  \draw (F) -- (G);
  \draw (F) -- (H);
  \draw (D) -- (E);
  \draw (D) -- (I);
  \fill[black] (A) circle (2pt);
  \fill[black] (B) circle (2pt);
  \fill[black] (C) circle (2pt);
  \fill[black] (F) circle (2pt);
  \fill[black] (G) circle (2pt);
  \fill[black] (H) circle (2pt);
  \fill[black] (D) circle (2pt);
  \fill[black] (E) circle (2pt);
  \fill[black] (I) circle (2pt);

  \draw[loosely dotted, line width=1.5pt] (3.02, 1.53) -- (3.47, 0.96);
  \draw[loosely dotted, line width=1.5pt] (4.00, 0.50) -- (4.00, -0.50);
\end{tikzpicture}
        \caption{$U_{17}$}
    \end{subfigure}
    \begin{subfigure}[b]{0.45\textwidth}
        \centering
        \begin{tikzpicture}[scale=0.8]
  \coordinate (A) at (0.00, 0.00);
  \coordinate (B) at (2.00, 0.00);
  \coordinate (C) at (1.00, 2.00);
  \coordinate (F) at (3.00, 0.00);
  \coordinate (G) at (4.00, 0.50);
  \coordinate (H) at (4.00, -0.50);
  \coordinate (D) at (-1.00, 0.00);
  \coordinate (I) at (-2.00, 0.50);
  \coordinate (J) at (-2.00, -0.50);

    \draw (0,0) node[anchor=north] {$v_1$};
    \draw (2,0) node[anchor=north] {$v_2$};
    \draw (1,2) node[anchor=west] {$v_3$};
    \draw (3,0) node[anchor=north] {$v_4$};
    \draw (-1,0) node[anchor=north] {$u$};
    \draw (-2,-0.5) node[anchor=north] {$v$};

  \draw (A) -- (B);
  \draw (A) -- (C);
  \draw (C) -- (B);
  \draw (B) -- (F);
  \draw (B) -- (D);
  \draw (F) -- (G);
  \draw (F) -- (H);
  \draw (I) -- (D);
  \draw (J) -- (D);
  \fill[black] (A) circle (2pt);
  \fill[black] (B) circle (2pt);
  \fill[black] (C) circle (2pt);
  \fill[black] (F) circle (2pt);
  \fill[black] (G) circle (2pt);
  \fill[black] (H) circle (2pt);
  \fill[black] (D) circle (2pt);
  \fill[black] (I) circle (2pt);
  \fill[black] (J) circle (2pt);

  \draw[loosely dotted, line width=1.5pt] (-2.00, 0.50) -- (-2.00, -0.50);
  \draw[loosely dotted, line width=1.5pt] (4.00, 0.50) -- (4.00, -0.50);
\end{tikzpicture}
        \caption{$U_{18}$}
    \end{subfigure}
    \caption{Unicyclic graphs with a $3$-cycle and $\gamma = 3$.}
    \label{fig:3-2-3}
\end{figure}

\begin{itemize}
    \item[(a)] Suppose $G \cong U_{15}$ (see (a) of Figure~\ref{fig:3-2-3}). Let $G^{*}$ be the graph obtained from $G$ by subdividing the edge $ uv_4$, then using Lemma~\ref{lem-sub}, we have $\rho(G^*) < \rho(G)$. Next, let $G^{**}$ be the graph obtained from $G^*$ by deleting a pendant vertex. Since $G^{**}$ is a proper subgraph of $G^*$, using Lemma~\ref{lem:subgraph}, we have that $\rho(G^{**}) < \rho(G^*) < \rho(G)$. Observe that $G^{**} \cong U_{16}$. Thus, we have $\rho(U_{15})>\rho(U_{16})$. Note that $U_{16}\cong G_5(r,n-6-r)$ for some positive integer $r\leq n-7$. Thus, from Theorem \ref{thm:G5} we get $\rho(U_{15})> \rho(U_{16})>\rho(U_{n,3}^*)$.
    \item[(b)] Suppose $G \cong U_{18}$ (see (d) of Figure~\ref{fig:3-2-3}). Suppose $x_{v_2} \ge x_{v_1}$, then let $G^{*}$ be the graph obtained from $G$ by deleting the edge $u v_{v_1}$ and adding the edge $uv_2$. On the other hand if $x_{v_1} > x_{v_2}$, then let $G^{*}$ be the graph obtained from $G$ by deleting the edge $v_2v_4$ and adding an edge between $u$ and $v_1$ and between $v_1$ and $v_4$. In both of the cases, using Lemma~\ref{lem:rho}, we have $\rho(G^*) > \rho(G)$ and $G^* \cong U_{17}$ (see (c) of Figure~\ref{fig:3-2-3}). Hence, we have $\rho(U_{17})>\rho(U_{18})$. Note that $U_{18}\cong G_6(r,n-5-r)$ for some positive integer $r\leq n-6$. Thus, from Theorem \ref{thm:G6} we get $\rho(U_{17})> \rho(U_{18})>\rho(U_{n,3}^*)$.
\end{itemize}
\end{itemize}
\end{itemize}

This completes the proof.
\end{proof}


\subsection{Graphs with minimum spectral radius in $\mathcal{U}_{n,3}$}
We are now ready to find the graph that minimizes the spectral radius among all graphs in $\mathcal{U}_{n,3}$. We state and prove the main result of this section that characterizes the graphs with the minimum spectral radius among all unicyclic graphs with matching number $3$.
\begin{theorem}
	Let $G \in \mathcal{U}_{n,3}$ with $n \ge 12$ be a graph with minimum spectral radius. Then
	\[
	G \cong 
	\begin{cases} 
			G_2\left(\frac{n+1}{3}, \frac{n-14}{3}, \frac{n-5}{3}\right) & \text{if } n \equiv -1 \pmod{3} \\
			G_2\left(\frac{n}{3}, \frac{n-12}{3}, \frac{n-6}{3}\right) \text{ or } G_3\left(\frac{n-6}{3}, \frac{n-6}{3}, \frac{n-6}{3}\right)& \text{if } n \equiv 0 \pmod{3}, \text{ and } \\
			G_2\left(\frac{n-1}{3}, \frac{n-10}{3}, \frac{n-7}{3}\right) & \text{if } n \equiv 1 \pmod{3}.
		\end{cases}
	\]
\end{theorem}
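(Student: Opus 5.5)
The plan is to combine the girth-by-girth results of this section into a single comparison. First I will bound the girth of $G \in \mathcal{U}_{n,3}$. If the unique cycle has length $g \ge 7$, then the cycle alone contains $\lfloor g/2\rfloor \ge 3$ independent edges. For $g=7$ or $8$, together with $n\ge 12$ there is a vertex off the cycle. Its attachment gives an extra matching edge that is disjoint from a suitable choice of $3$ cycle edges: a cycle vertex $w$ with a neighbour off the cycle leaves a path on $g-1\ge 6$ vertices, which has $3$ independent edges. Hence $\gamma(G)\ge 4$. For $g\ge 9$ we already have $\gamma\ge 4$. So the girth lies in $\{3,4,5,6\}$.

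Next I will dispatch each girth using the results already proved.
\begin{itemize}
\item If the girth is $3$, Theorem~\ref{thm-3-cycle} gives $\rho(G)>\rho(U_{n,3}^*)$.
\item If the girth is $5$, Theorem~\ref{thm-5-cycle} gives $\rho(G)>\rho(U_{n,3}^*)$.
\item If the girth is $4$, Theorem~\ref{main_1} gives $\rho(G)\ge\rho(U_{n,3}^*)$, with equality if and only if $G\cong U_{n,3}^*$.
\item If the girth is $6$, Theorem~\ref{thm-6-cycle} gives $\rho(G)\ge \rho(U_{n,3}^*)$, with equality if and only if $n\equiv 0 \pmod 3$ and $G\cong G_3(\frac{n-6}{3},\frac{n-6}{3},\frac{n-6}{3})$.
\end{itemize}
Thus the minimum spectral radius over $\mathcal{U}_{n,3}$ equals $\rho(U_{n,3}^*)$. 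The minimizers are exactly $U_{n,3}^*$, together with the balanced $G_3$ when $3\mid n$.

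Finally I will translate $U_{n,3}^*=G_2(k,k+l-4,k-2)$, where $n=3k+l$, into the stated parameters.
\begin{itemize}
\item For $l=-1$: $k=\frac{n+1}{3}$, giving $G_2(\frac{n+1}{3},\frac{n-14}{3},\frac{n-5}{3})$.
\item For $l=0$: $k=\frac n3$, giving $G_2(\frac n3,\frac{n-12}{3},\frac{n-6}{3})$.
\item For $l=1$: $k=\frac{n-1}{3}$, giving $G_2(\frac{n-1}{3},\frac{n-10}{3},\frac{n-7}{3})$.
\end{itemize}
In each case the three parameters sum to $n-6$, as required. The $G_3$ alternative is also checked to be non-isomorphic to $G_2$, since one has a $6$-cycle and the other a $4$-cycle, so the case $n\equiv 0$ genuinely has two extremal graphs.

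The main obstacle is the girth reduction for $g\in\{7,8\}$ and the exhaustiveness of the cases. For $g=7$ or $8$ I will argue explicitly that connectivity forces some cycle vertex $w$ to have a neighbour $x$ off the cycle. The edge $wx$ together with a perfect or near-perfect matching of the path $C-w$ then gives $4$ independent edges. A secondary point is the threshold $n\ge 12$: some of the invoked results use $n\ge 14$ or tables for small $n$, so I rely on them as stated.
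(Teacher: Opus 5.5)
Your proposal is correct and follows essentially the same route as the paper. It rules out girth $\ge 7$ by a matching argument using $n\ge 12$, then combines Theorems~\ref{thm-3-cycle}, \ref{thm-5-cycle}, \ref{main_1} and \ref{thm-6-cycle}, and rewrites $U_{n,3}^*=G_2(k,k+l-4,k-2)$ in the stated parameters. Your girth reduction is more explicit than the paper's one-line remark, though for $g=8$ the cycle alone already has $4$ independent edges, so the extra pendant-edge argument is only needed when $g=7$.
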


\begin{proof}
	Let $G \in \mathcal{U}_{n,3}$ be a graph with minimum spectral radius. Suppose $G$ has a cycle of length $7$, then it cannot contain any more vertices except the vertices of the cycle, otherwise $\gamma > 3$. Since $n \ge 12$, the minimal graph $G$ cannot have a cycle of length $7$ or more. Next, using Theorems \ref{thm-5-cycle} and \ref{thm-3-cycle}, we can conclude that $G$ must contain a cycle of length $4$ or $6$. Finally, the result directly follows from Theorems~\ref{thm-6-cycle} and \ref{main_1}.
\end{proof}

\section*{Declaration of competing interest}

The authors declare that they have no known competing financial interests or personal relationships that could have appeared to influence the work reported in this article.



\section*{Data availability}

Data sharing is not applicable to this article as no datasets were generated or analyzed during the current study.

\small{

\end{document}